\documentclass{article}
\usepackage{amsmath}
\usepackage{amsfonts}
\usepackage{amssymb}
\usepackage{graphicx}
\usepackage{mathrsfs}
\usepackage{latexsym}
\usepackage{xy} \xyoption{all}
\usepackage{amsthm}
\usepackage{epsfig}
\RequirePackage{color}
\usepackage{multicol}

\setcounter{MaxMatrixCols}{30}

\newtheorem{theorem}{Theorem}[subsection]

\newtheorem{corollary}[theorem]{Corollary}

\newtheorem{definition}[theorem]{Definition}
\newtheorem{example}[theorem]{Example}

\newtheorem{lemma}[theorem]{Lemma}

\newtheorem{proposition}[theorem]{Proposition}
\newtheorem{remark}[theorem]{Remark}

\begin{document}

\title{A Survey on The Ideal Structure of Leavitt Path Algebras\footnotetext{2010 \textit{Mathematics Subject Classification}: 16D25, 16W50;
\textit{Key words and phrases:} Leavitt path algebras, arbitrary graphs, maximal ideals, prime ideals.}}
\author{M\"{u}ge KANUN\.{I} $^{(1)}$, Suat SERT $^{(1)}$\\$^{(1)\text{ }}$Department of Mathematics, D\"{u}zce University, \\Konuralp 81620 D\"{u}zce, Turkey.\\E-mail: mugekanuni@duzce.edu.tr\\E-mail: suatsert@gmail.com\\}
\date{}
\maketitle

\begin{abstract}
There is an extensive recent literature on the graded, non-graded, prime, primitive, maximal ideals of Leavitt path algebras.
In this introductory level survey, we will be giving an overview of different types of ideals and the correspondence between the lattice of ideals and the lattice of hereditary and saturated subsets of the graph over which the Leavitt path algebra is constructed.  
\end{abstract}
\section*{Introduction}
Leavitt path algebras are introduced independently by Abrams and Aranda Pino in \cite{AA} and by Ara, Moreno and Pardo in \cite{AMP} around 2005. When the Leavitt path algebra is defined over the complex field it is the dense subalgebra of the graph C$^{\ast}$-algebra. (For a comprehensive survey on the graph C$^{\ast}$-algebras by Raeburn, see \cite{Raeburn}). This close connection between algebra and analysis, flourished with many similar results on the algebraic and analytic structures. A survey article
by Abrams \cite{A-Decade} summarized this interaction, also listed the similarities/differences of algebraic and analytic results giving an extensive list of references. This topic attracted the interest of many mathematicians immediately as the structure reveals itself in the graph properties on which it is constructed. Leavitt path algebras produced examples to answer some well-known open problems. Hence, hundreds of papers are published within a decade. 

For a detailed discussion on Leavitt path algebras, interactions with various topics, we refer the interested reader to a well-written introductory level book published in 2017 by Abrams, Ara and Siles Molina \cite{AAS} which covers most of the literature. 

Our main aim in this article is to focus only on the prime, primitive and maximal two-sided ideals of Leavitt path algebras over a field, we gather and cite the known results that are either included in the book \cite{AAS} or some recent {\it to appear} results \cite{EK}, \cite{Ranga3}. To keep the survey short and to avoid the overlap with other expository papers, we did not include many other important and interesting topics in the ideal structure of Leavitt path algebras. We also did not extend the discussion to the results on Leavitt path algebras over commutative rings.   

\section{Preliminaries} 
The first section consists of preliminary definitions all of which can be found in the book \cite{AAS}.
\subsection{Graph Theory}
We first start with the basic definitions on graphs that is the main discrete structure of our interest. In this paper, $E = (E^0, E^1, s, r)$ will denote a directed graph with vertex set $E^0$, edge set $E^1$, source function $s$, and range function $r$. In particular, the source vertex of an edge $e$ is denoted by $s(e)$, and the range vertex by $r(e)$. The graph $E$ is called {\it finite} if both $E^0$ and $E^1$ are finite sets, and called {\it row-finite} if every vertex emits only finitely many edges. A vertex which emits infinitely many edges is called 
an {\it infinite emitter}. A {\it sink} is a vertex $v$ for which the set $s^{-1}(v) = \{e\in E^1 \mid s(e) = v\}$ is empty, i.e. emits no edges. A vertex is a {\it regular} vertex if it is neither a sink nor an infinite emitter. 

A proper path $\mu$ is a sequence of edges $\mu=e_1e_2...e_n$ such that $s(e_i)=r(e_{i-1})$ for $i=2,...,n$. Any vertex is considered to be a trivial path of length zero. 
The \textit{length of a path $\mu$} is the number of edges forming the path, i.e. $l(\mu)=n$ and the 
set of all paths is denoted by Path$(E)$. 
If $n=l(\mu)\geq 1$, and $v=s(\mu)=r(\mu)$, then $\mu$ is called a \textit{closed path based at v}. Again, $\mu$ is a \textit{closed simple path based at v} if $s(e_j)\neq v$ for every $j>1$. 
If $\mu=e_1e_2...e_n$ is a closed path based at $v$ and $s(e_i)\neq s(e_j)$ for every $i \neq j$, then $\mu$ is called a {\it cycle based at $v$}. An \textit{exit }for a path $\mu=e_{1}\dots e_{n}$ is an edge $e$ such that
$s(e)=s(e_{i})$ for some $i$ and $e\neq e_{i}$.
	A cycle of length $1$ is called a {\it loop}. A graph $E$ is said to be {\it acyclic} in case it does not have any closed paths 
	based at any vertex of $E$. 

There are some graph properties that deserves to be named which will be used in the sequel.  

\begin{definition}\label{M(u)} \rm
	For $v,w \in E^0$, we write 
	$v \geq w$ in case there is a path $\mu \in Path(E)$ such that $s(\mu)=v$ and $r(\mu)=w$.\\
	
	If $v \in E^0$ then the {\it tree of $v$}, denoted $T(v)$, is the set
	$$T(v)=\{ w \in E^0 \mid v \geq w\}.$$ 
	Also, define $M(v)=\{w\in E^{0}: w\geq v\}.$
\end{definition}

\begin{definition}\rm
	A graph $E$ satisfies {\it Condition $(K)$} if for each $v \in E^0$ which lies on a closed simple path, there exist at least two distinct closed simple paths $\alpha,\beta$ based at $v$. \\
	A graph $E$ satisfies {\it Condition $(L)$} if every cycle in $E$ has an exit.\\
	%2013 Ranga Theory of Prime Ideals  LPA %%%%%
	A cycle $c$ in a graph $E$ is called a {\it cycle without $K$}, if no vertex on $c$ is the base of another distinct cycle in $E$ (where distinct cycles possess different sets of edges).\\ %and different sets of vertices).\\
	A graph $E$ satisfies the \textit{Countable Separation Property}, if there exists a countable set $S$ of vertices in $E$ such that, for each vertex $u \in E$, there exists $w \in S$ for which $u \geq w$. \\
	A graph $E$ is said to be \textit{countably directed} if there is a non-empty at most countable subset $S$ of $E^0$ such that, 
for any two $u, v \in E^0$, there is a $w \in S$ such that $u \geq w$ and $v \geq w$.
\end{definition}

\begin{definition}\rm
	Let $E$ be a graph, and $H \subseteq E^0$.
	 $H$ is {\it hereditary} if whenever $v \in H$ and $w \in E^0$ for which $v \geq w$, then $w \in H$. \\
	$H$ is {\it saturated} if whenever a regular vertex $v$ has the property that $\{r(e)| e \in E^1, s(e)=v\} \subseteq H$, then $v \in H$. 
	
We denote $\mathscr{H}_E$ the set of those subsets of $E^0$ which are both hereditary and saturated.

\end{definition}

For a given graph, there are many different new graph constructions that play a role in the ideal theory of Leavitt path algebras.
\begin{definition}\rm \label{graph}
	\textbf{(The restriction graph $E_H$)} Let $E$ be an arbitrary graph, and let $H$ be a hereditary subset of $E^0$. We We denote bydenote by $E_H$ the restriction graph:
	$$E_{H}^{0}:=H,\quad E_{H}^{1}:=\{e \in E^1 | s(e) \in H \},$$
	and the source and range functions in $E_H$ are the source and range functions in $E$, restricted to $H$.

	\textbf{(The quotient graph by a hereditary subset $E/H$)} Let $E$ be an arbitrary graph, and let $H$ be a hereditary subset of $E^0$. We denote by $ E / H $ the \textit{quotient graph of $E$ by $H$}, defined follows:
	$$(E/H)^0 = E^0 \backslash H , \text{ and } (E/H)^1 = \{e \in E^1 | r(e) \notin H \}.$$
	The range  and source functions for $E/H$ are defined by restricting the range and source functions of $E$ to $(E/H)^1$.
	
	\textbf{(The hedgehog graph for a hereditary subset $F_E(H)$)} Let $E$ be an arbitrary graph. Let $H$ be a nonempty hereditary subset of $E^0$. We denote by $F_E(H)$ the set
	$$F_E(H)=\{ \alpha \in Path(E) | \alpha=e_1....e_n , \text{ with } s(e_1) \in E^0 \backslash H, r(e_i) \in E^0 \backslash H \text{ for all }$$ $$1 \leq i < n, \text{ and } r(e_n) \in H \}$$
	We denote by $\overline{F}_E(H)$ another copy of $F_E(H)$. If $\alpha \in F_E(H)$, we will write $\overline{\alpha}$ to refer to a copy of $\alpha$ in $\overline{F}_E(H)$. We define the graph $_HE=(_HE^0,_HE^1,s',r')$ as follows:
	$$_HE^0=H \cup F_E(H), \quad \text{ and } \quad _HE^1=\{e \in E^1 | s(e) \in H \} \cup \overline{F}_E(H).$$
	The source and range functions $s'$ and $r'$ are defined by setting $s'(e)=s(e)$ and $r'(e)=r(e)$ for every $e \in E^1$ such that $s(e) \in H$; and by setting $s'(\overline{\alpha})=\alpha$ and $r'(\overline{\alpha})=r(\alpha)$ for all $\overline{\alpha} \in \overline{F}_E(H)$.
	
	Intuitively, $F_E(H)$ can be viewed as $H$, together with a new vertex corresponding to each path in $E$ which ends at a vertex in $H$, but for which none of the previous edges in the path ends at a vertex in $H$. For every such new vertex, a new edge is added going into $H$. 
In $F_E(H)$, the only paths entering the subgraph $H$ have common length 1; (the new graph looks like a hedgehog where the body is $H$ and the quills are the edges into $H$). 
\end{definition}

\begin{example}\label{Ekornek} 
	\rm
	Consider the graph $E$ below and take the hereditary saturated subset $H= \{v,w \}$,
	\medskip
	\qquad \qquad \qquad 
	$$ \xymatrix{
		{\bullet}^u  \ar@(u,l) \ar@(d,l)   \ar@{->}[r]  & {\bullet}^v \ar@{->}[r]
		& {\bullet}^w \ar@(ur,dr) }
	$$ 
	\medskip
	The restriction graph is 
	
	\medskip
	
	\qquad \qquad \qquad $E_H$
	$$ \xymatrix{
		{\bullet}^v \ar@{->}[r]
		& {\bullet}^w \ar@(ur,dr) }
	$$ 
	\medskip
	
	The quotient graph $E/H$ is
	
	\medskip
	\qquad \qquad \qquad $E/H$
	$$ \xymatrix{
		{\bullet}^u  \ar@(u,l) \ar@(d,l)    }
	$$ 
	%\medskip
	
	%The hedgehog graph ${}_HE$ is 

\end{example}

\medskip 

\begin{example}\label{Ekornek2} 
\rm
Consider the graph $E$ below and take the hereditary saturated subset $H= \{v,w \}$,
$$ \xymatrix{
		{\bullet}^u  \ar@(ul,dl)_e  \ar@{->}[r]^f  & {\bullet}^v \ar@{->}[r]
		& {\bullet}^w \ar@(ur,dr) }
	$$ 
The hedgehog graph ${}_HE$ is 
$$ \xymatrix{
		{\bullet}^{e^2f}  \ar@{->}[dr]^{\overline{e^2f}} 
		& %{\bullet}^{x_n} 
		\cdots \ar@{.>}[d] 
		& {\bullet}^{e^nf} 
		\ar@{->}[dl]_{\overline{e^nf}} \\
		{\bullet}_{ef}  \ar@{->}[r]^{\overline{ef}}  & {\bullet}^v \ar@{->}[r]
		& {\bullet}^w \ar@(ur,dr) \\ 
		{\bullet}_{f}  \ar@{->}[ur]_{\overline{f}} & & }
	$$ 
\end{example}

\medskip

When we have infinite emitters in a graph, the graph is not row-finite and we need to introduce the notion of breaking vertices.
\begin{definition}\rm
	Let $E$ be an arbitrary graph and $K$ be any field. Let $H$ be a hereditary subset of $E^0$, and let $v \in E^0$. We say that $v$ is a \textit{breaking vertex of $H$} if $v$ belongs to the set
	\[
	B_H:=\{ v \in E^0 \backslash H | \text{ v is an infinite emitter and } 0 < | s^{-1} (v) \cap r^{-1} (E^0 \backslash H)| < \infty \}.
	\]
	In words, $B_H$ consists of those vertices of $E$ which are infinite emitters, which do not belong to $H$, and for which the ranges of the edges they emit are all, except for a finite (but nonzero) number, inside $H$. For $v \in B_H$, we define the element $v^H$ of $L_K(E)$ by setting
	\[
	v^H:=v - \displaystyle\sum_{e\in s^{-1}(v) \cap r^{-1}(E^0 \backslash H)} ee^{\ast} .
	\]
	We note that any such $v^H$ is homogeneous of degree $0$ in the standard $\mathbb{Z}$-grading on $L_K(E)$. For any subset $S \subseteq B_H$, we define $S^H \subseteq L_K(E)$ by setting $S^H = \{v^H | v \in S\}$. Given a hereditary saturated subset $H$ and a subset $S \subset B_H$, $(H, S)$ is called an \textit{admissible pair}. Given an admissible pair $(H, S)$, the ideal generated by $H \cup S^H$ is denoted by $I(H, S)$.
\end{definition}
Now, the new graph constructions that we defined in Definition \ref{graph}, can be extended to graphs with infinite emitters. 
\begin{definition}\rm
	\textbf{(The quotient graph $E/(H,S)$)} Let $E$ be an arbitrary graph, $H \in \mathscr{H}_E$, and $S \subseteq B_H$. We denote by $E/(H,S)$ the \textit{quotient graph of $E$ by $(H,S)$}, defined as follows:
	$$(E/(H,S))^0 = (E^0 \backslash H) \cup \{ v' | v \in B_H \backslash S \},$$	
	$$(E/(H,S))^1 = \{e \in E^1 | r(e) \notin H\} \cup \{ e' | e \in E^1 \text{ and } r(e) \in B_H \backslash S \},$$
	and range and source maps in $E/(H,S)$ are defined by extending the range and source maps in $E$ when appropriate, and in addition setting $s(e')=s(e)$ and $r(e')=r(e)'$.

	\textbf{(The generalized hedgehog graph construction $_{(H,S)}E$)} Let $E$ be an arbitrary graph, $H$ a nonempty hereditary subset of $E$, and $S \subseteq B_H$. We define
	$$F_1(H,S):=\{\alpha \in Path(E) | \alpha=e_1...e_n, r(e_n) \in H \text{ and } s(e_n) \notin H\cup S\}, \text{ and }$$ 
	$$F_2(H,S):=\{\alpha \in Path(E)|\quad |\alpha|\geq 1 \text{ and } r(\alpha) \in S\}.$$
	For $i=1,2$ we denote a copy of $F_i(H,S)$ by $\overline{F}_i(H,S)$. We define the graph $_{(H,S)}E$ as follows:
	$$_{(H,S)}E^0:=H \cup S\cup F_1(H,S)\cup F_2(H,S), \text{ and }$$
	$$_{(H,S)}E^1:=\{e \in E^1 | s(e) \in H\} \cup \{e \in E^1 | s(e) \in S \text{ and } r(e) \in H\} \cup \overline{F}_1(H,S) \cup \overline{F}_2(H,S).$$\\
	The range and source map for $_{(H,S)}E$ are described by extending $r$ and $s$ to $_{(H,S)}E^1$, and by defining $r(\overline{\alpha})=\alpha$ and $s(\overline{\alpha})=\alpha$ for all $\overline{\alpha}\in \overline{F}_1(H,S)\cup \overline{F}_2(H,S)$.
\end{definition}
\begin{definition}\rm 
  A graph $F$ is a \textit{subgraph} of a graph $E$, if $F^0 \subset E^0$ and $F^1 \subset E^1$ where 
  for any $f \in F^1$, $s(f), r(f) \in F^0$.
 
	A subgraph $F$ of a graph $E$ is called \textit{full} in case for each $v,w \in F^0$,
	$$\{f \in F^1 | s(f)=v, r(f)=w\}=\{e \in E^1 | s(e)=v, r(e)=w\}.$$
	In other words, the subgraph $F$ is full in case whenever two vertices of $E$ are in the subgraph, then all of the edges connecting those two vertices in $E$ are also in $F$.
	 
	A non-empty full subgraph $M$ of $E$ is a \textit{maximal tail} if it satisfies the following properties:
	\begin{enumerate}
		\item [($MT-1$)] If $v\in E^0 , w \in M^0$ and $v \geq w$, then $v \in M^0$;
		\item [($MT-2$)] If $v \in M^0$ and $s_{E}^{-1}(v) \neq \emptyset$, then there exists $e \in E^1$ such that $s(e)=v$ and $r(e) \in M^0$; and
		\item [($MT-3$)] If $v,w \in M^0$, then there exists $y \in M^0$ such that $v \geq y$ and $ w \geq y$.
	\end{enumerate}
Condition $MT-3$ is now more commonly called \textit{downward directedness} in literature, however we will use the term $MT-3$ for consistency throughout the text.
\end{definition}

\subsection{Leavitt Path Algebra}
\begin{definition}\rm
Given an arbitrary graph $E$ and a field $K$, the {\it Leavitt path algebra
}$L_{K}(E)$ is defined to be the $K$-algebra generated by a set $\{v:v\in
E^{0}\}$ of pair-wise orthogonal idempotents together with a set of variables
$\{e,e^{\ast}:e\in E^{1}\}$ which satisfy the following conditions:

\begin{enumerate}
\item[(1)] $s(e)e=e=er(e)$ for all $e\in E^{1}$.

\item[(2)] $r(e)e^{\ast}=e^{\ast}=e^{\ast}s(e)$\ for all $e\in E^{1}$.

\item[(3)] (CK-1 relations) For all $e,f\in E^{1}$, $e^{\ast}e=r(e)$ and
$e^{\ast}f=0$ if $e\neq f$.

\item[(4)] (CK-2 relations) For every regular vertex $v\in E^{0}$,
\[
v=\sum_{e\in E^{1},\ s(e)=v}ee^{\ast}.
\]

\end{enumerate}
\end{definition} 
The Leavitt path algebra is spanned as a $K$-vector space by the set of monomials 
$$\{\gamma \lambda^{\ast} |\gamma , \lambda \in Path(E) \text{ such that } r(\gamma) = r(\lambda) \}$$
That is, any $x \in L_K(E)$,
$$ x=\displaystyle\sum_{i=1}^{n}k_{i}\gamma_{i}\lambda_{i}^{\ast} \quad \text{for any} \quad k_{i}\in K , \gamma_{i},\lambda_{i} \in Path(E).$$

Some familiar rings appear as examples of Leavitt path algebras, for instance:   

\begin{example}Take the graph $E$ as
$$\xymatrix{{\bullet}^{v_1} \ar [r] ^{e_1} & {\bullet}^{v_2}  \ar@{.}[r] & {\bullet}^{v_{n-1}} 
	\ar [r]^{e_{n-1}} & {\bullet}^{v_n}}$$
$L_K(E)\cong M_n(K)$. 
\end{example}
\begin{example} \label{loop} Take the graph $R_1$ as
$$  \xymatrix{{\bullet}^{v} \ar@(ur,dr)^e }$$
In this case, $L_K(R_1)\cong K[x,x^{-1}]$ via $ v \mapsto 1 , e \mapsto x , e^{\ast} \mapsto x^{-1}.$
\end{example}

\begin{example} For $n \geq 2$, consider the graph 
$$ R_n  =  \xymatrix{ & {\bullet^v} \ar@(ur,dr) ^{e_1} \ar@(u,r) ^{e_2}
	\ar@(ul,ur) ^{e_3} \ar@{.} @(l,u) \ar@{.} @(dr,dl) \ar@(r,d) ^{e_n}
	\ar@{}[l] ^{\ldots} }$$
Then  $L_K(R_n)\cong L_K(1,n)$ which is Leavitt algebra of type $(1,n)$. 
\end{example}
Recall that a ring $R$ is said to have \textit{a set of local units} $F$, where $F$ is a set of idempotents
in $R$ having the property that, for each finite subset $r_1 ,\ldots,r_n$ of $R$, there exists $f \in F$ with $fr_if=r_i$ for
all $1 \leq i \leq n$. A ring $R$ with unit 1 is, clearly, a ring with a set of local units where $F=\{ 1 \}$. 

In the case of Leavitt path algebras, 
for each $x \in L_K(E)$ there exists a finite set of distinct vertices $V(x)$ for which $x = f x f$ , where
$f = \sum_{v \in V(x)} v$. When  $E^0$ is finite, $L_{K}(E)$ is a ring with unit element 
$\displaystyle 1= \sum_{v\in E^{0}}v$. Otherwise, $L_{K}(E)$ is not a unital ring, but is a ring with local units 
consisting of sums of distinct elements of $E^0$.

One of the most important properties of the class of Leavitt path algebras is that each $L_K(E)$ is a $\mathbb{Z}$-graded $K$-algebra. 
that is, 
$L_{K}(E)={\displaystyle\bigoplus\limits_{n\in\mathbb{Z}}}L_{n}$ 
induced by defining, for all $v\in E^{0}$ and $e\in E^{1}$, $\deg
(v)=0$, $\deg(e)=1$, $\deg(e^{\ast})=-1$. Further, for each 
$n\in \mathbb{Z}$, the homogeneous component $L_{n}$ is given by
\[L_{n}=\left\{{\textstyle\sum}
k_{i}\alpha_{i}\beta_{i}^{\ast}\in L:\text{ }l(\alpha_{i})-l(\beta
_{i})=n, \ k_i \in K, \ \alpha_{i},\beta_{i} \in Path(E)\right\}.
\]
An ideal $I$ of $L_{K}(E)$ is said to be a \textit{graded ideal} if 
$I={\displaystyle\bigoplus\limits_{n\in\mathbb{Z}}}(I\cap L_{n})$.
In the sequel all ideals of our concern will be two-sided.
\section{Ideals in Leavitt Path Algebras} 
 Recall that an (not necessarily unital) algebra $R$ is called {\it simple}, if $R^2 \neq 0$ and $R$ has no proper non-trivial ideals. Simple Leavitt path algebras are characterized in \cite{AA} by Abrams and Aranda Pino. 
\begin{theorem}
	Let $E$ be an arbitrary graph and $K$ be any field. Then $L_K(E)$ is simple if and only if $E$ has Condition $(L)$ and the only hereditary saturated subsets of $E^0$ are $\emptyset$ and $E^0$.
\end{theorem}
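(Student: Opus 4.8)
The plan is to prove both implications separately. For the forward direction, suppose $L_K(E)$ is simple. First I would show that the only hereditary saturated subsets are $\emptyset$ and $E^0$: if $H \in \mathscr{H}_E$ were a proper nontrivial such subset, then the ideal $I(H)$ generated by $H$ would be a proper nonzero ideal of $L_K(E)$ (nonzero because $H \neq \emptyset$, and proper because $I(H)$ does not meet $E^0 \setminus H$ — this last fact uses the description of $L_K(E)$ as spanned by monomials $\gamma\lambda^\ast$ together with a normal-form/grading argument showing the vertices outside $H$ survive in the quotient), contradicting simplicity. Next I would show Condition $(L)$ holds: if some cycle $c$ based at $v$ had no exit, one uses the CK-relations to see that the subalgebra generated by the edges of $c$ behaves like $K[x,x^{-1}]$ (cf. Example \ref{loop}), which has proper nonzero ideals; pulling such an ideal back, or more directly exhibiting the ideal generated by an element like $1 - \sum (\text{edges of } c)(\cdots)^\ast$ appropriately, gives a proper nonzero ideal of $L_K(E)$, again a contradiction.

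For the reverse direction, assume $E$ has Condition $(L)$ and the only hereditary saturated subsets of $E^0$ are $\emptyset$ and $E^0$. Let $I$ be a nonzero ideal of $L_K(E)$; I must show $I = L_K(E)$. The key step is to prove that $I$ contains a vertex. To do this, take $0 \neq x \in I$ and write $x = \sum k_i \gamma_i \lambda_i^\ast$ in normal form; the standard technique (the "reduction" or "Cuntz--Krieger uniqueness"-type argument) is to multiply $x$ on the left and right by suitable vertices and paths, and then invoke Condition $(L)$ to find, after further multiplication, an element of $I$ of the form $k v$ with $k \in K^\times$ and $v \in E^0$ — here Condition $(L)$ is exactly what lets one "kill the cycles" and avoid getting stuck with a polynomial in a single variable with no exit. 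Hence $v \in I$.

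Once a vertex $v \in I$, consider $H = \{ w \in E^0 : v \geq w \} = T(v)$ intersected appropriately, or more precisely the set $H = \{ w \in E^0 : w \in I \}$. One checks that $H$ is hereditary (if $w \in I$ and $w \geq w'$, then $w' = \lambda^\ast \lambda$ for a path $\lambda$ from $w$ to $w'$... more carefully, $w' = r(\lambda) = \lambda^\ast w \lambda \in I$) and saturated (using the CK-2 relation: if all ranges of edges out of a regular vertex $u$ lie in $H$, then $u = \sum_{s(e)=u} e e^\ast \in I$ since each $e e^\ast = e\, r(e)\, e^\ast \in I$). Since $v \in H$, $H$ is nonempty, so by hypothesis $H = E^0$; thus every vertex lies in $I$. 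As $L_K(E)$ has local units consisting of sums of vertices, every element $x = fxf$ with $f$ a sum of vertices lies in $I$, so $I = L_K(E)$. Also $L_K(E)^2 \neq 0$ since, e.g., $v = v \cdot v$ for any vertex $v$, and $E^0 \neq \emptyset$.

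I expect the main obstacle to be the vertex-extraction step in the reverse direction: carrying out the reduction of an arbitrary nonzero element $x = \sum k_i \gamma_i \lambda_i^\ast$ down to a scalar multiple of a vertex is the technical heart, and it is precisely where Condition $(L)$ enters in an essential way (without it one can only reach a nonzero polynomial in a loop variable, which need not generate everything). The forward direction is comparatively routine given the graph constructions and grading already set up, though showing $I(H)$ is proper for proper $H$ does require the (standard but nontrivial) fact that $L_K(E)/I(H) \cong L_K(E/H)$ up to the breaking-vertex subtleties, which here are absent since we may reduce to the row-finite-enough situation or simply quote the quotient description.
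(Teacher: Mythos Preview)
Your outline is correct and follows the standard argument of Abrams and Aranda~Pino. Note, however, that the present paper is a survey and does not supply its own proof of this theorem; it simply states the result and cites \cite{AA}. That said, the two ingredients you single out for the reverse implication --- that $N\cap E^0\in\mathscr{H}_E$ for any ideal $N$, and that under Condition~$(L)$ every nonzero ideal contains a vertex --- are precisely the results the paper records (again without proof, citing \cite{AA}) immediately after the theorem, so your plan matches the way the survey assembles the pieces.

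The only soft spot is your forward-direction argument for Condition~$(L)$. Saying that the subalgebra generated by a cycle $c$ without exits ``behaves like $K[x,x^{-1}]$'' and then ``pulling back'' a proper ideal is not quite enough: one must show that the two-sided ideal of $L_K(E)$ generated by, say, $v+c$ is \emph{proper}, and this requires an honest argument (typically the graded uniqueness theorem, or an explicit representation sending $c$ to a scalar $\neq -1$). This is routine and well documented in \cite{AA} and \cite{AAS}, but it is the step where a careless write-up can go circular, so it is worth flagging.
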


In a Leavitt path algebra, the intersection of any ideal with the set of vertices is always a hereditary set. 
\begin{lemma}(\cite[Lemma 3.9]{AA})
	Let $E$ be an arbitrary graph and $K$ be any field. Let $N$ be an ideal of $L_K(E)$. Then $N \cap E^0 \in \mathscr{H}_E$.
\end{lemma}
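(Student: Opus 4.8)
The plan is to show directly that $N \cap E^0$ is both hereditary and saturated. Let $H = N \cap E^0$. For hereditariness, suppose $v \in H$ and $w \in E^0$ with $v \geq w$; I want $w \in N$. By definition of $v \geq w$ there is a path $\mu = e_1 \cdots e_n$ with $s(\mu) = v$ and $r(\mu) = w$. The key computation is that $\mu^{\ast} \mu = w$ in $L_K(E)$: applying the (CK-1) relations repeatedly, $\mu^{\ast}\mu = e_n^{\ast} \cdots e_1^{\ast} e_1 \cdots e_n = e_n^{\ast} \cdots e_2^{\ast} r(e_1) e_2 \cdots e_n = e_n^{\ast} \cdots e_2^{\ast} e_2 \cdots e_n$, since $r(e_1) = s(e_2)$ acts as a left identity on $e_2$; iterating collapses this to $r(e_n) = w$. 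Now $v \in N$ and $N$ is a two-sided ideal, so $w = \mu^{\ast} v \mu \in N$ (using $\mu = v\mu$ and $\mu^{\ast} = \mu^{\ast} v$ from relations (1) and (2)). Hence $w \in N \cap E^0 = H$.

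For saturation, suppose $v$ is a regular vertex with $\{r(e) \mid e \in E^1, s(e) = v\} \subseteq H$; I want $v \in N$. Since $v$ is regular, the set $s^{-1}(v)$ is finite and nonempty, so the (CK-2) relation applies: $v = \sum_{e \in E^1,\, s(e)=v} e e^{\ast}$. For each such $e$ we have $r(e) \in H \subseteq N$, and since $N$ is a two-sided ideal, $e e^{\ast} = e\, r(e)\, e^{\ast} \in N$. Summing over the finitely many edges $e$ with $s(e) = v$ gives $v \in N$, hence $v \in N \cap E^0 = H$.

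The two verifications together show $H \in \mathscr{H}_E$. I do not anticipate a genuine obstacle here; the only point requiring a little care is the telescoping identity $\mu^{\ast}\mu = r(\mu)$, which should be stated cleanly (perhaps by induction on $l(\mu)$, with the length-zero case $v^{\ast}v = v \cdot v = v$ handled by the convention that a vertex is its own star and is idempotent). The finiteness of $s^{-1}(v)$ in the saturation step is essential and is exactly what the hypothesis "regular vertex" provides, so it is worth flagging that the argument would fail for an infinite emitter — which is precisely the phenomenon that motivates the breaking-vertex machinery introduced earlier.
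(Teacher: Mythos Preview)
Your proof is correct and is exactly the standard direct verification: hereditariness via the telescoping identity $\mu^{\ast}\mu = r(\mu)$ from (CK-1), and saturation via (CK-2) together with $ee^{\ast} = e\,r(e)\,e^{\ast}$. The survey itself does not supply a proof of this lemma, it merely cites \cite[Lemma 3.9]{AA}; the argument there is precisely the one you have written, so there is nothing further to compare.
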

$N \cap E^0$ may very well be the empty set, however 
if the Leavitt path algebra is over a graph that satisfies Condition $(L)$ then $N$ definitely contains a vertex (an idempotent).  
\begin{proposition} (\cite[Corollary 3.8]{AA})
	Let $E$ be an arbitrary graph and  
	Let $E$ be a graph satisfying Condition $(L)$ and $K$ be any field. Then every nonzero ideal of $L_K(E)$ contains a vertex.
\end{proposition}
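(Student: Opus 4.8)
The plan is to show that a nonzero ideal $N$ of $L_K(E)$, when $E$ satisfies Condition $(L)$, must contain a vertex. First I would take a nonzero element $x \in N$ and use the local unit property: there is a finite set $V(x)$ of vertices so that $x = \left(\sum_{v \in V(x)} v\right) x \left(\sum_{v \in V(x)} v\right)$. Since only finitely many vertices are involved, the computations can be carried out inside the (unital) corner algebra $pL_K(E)p$ where $p = \sum_{v\in V(x)} v$, which lets one work as if in a unital ring. Writing $x = \sum_{i=1}^n k_i \gamma_i \lambda_i^\ast$ as a $K$-linear combination of monomials, I would try to multiply $x$ on the left and right by suitable paths (and their starred versions) and apply the $(CK1)$ and $(CK2)$ relations to \emph{reduce} $x$ to an element supported on fewer monomials, ideally arriving at an element of the form $k\,v + (\text{terms involving a cycle at } v)$ for some vertex $v$ and some $0 \neq k \in K$.

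The key technical device is a normalization/reduction lemma: given any nonzero $x$ in $L_K(E)$, after left and right multiplication by monomials one can produce a nonzero element $y \in N$ lying in a subalgebra of the form $vL_K(E)v$ that, after further reduction, equals either a nonzero scalar multiple of $v$ itself (in which case $v \in N$ and we are done), or a nonzero polynomial $p(c, c^\ast)$ in a closed simple path $c$ based at $v$. In the first case we are finished immediately. In the second case, this is exactly where Condition $(L)$ enters: since every cycle has an exit, the cycle underlying $c$ has an exit, and one can use that exit edge together with the $(CK2)$ relation at the relevant vertex to further multiply down and kill the higher-degree terms in $p(c,c^\ast)$, again producing a nonzero scalar multiple of a vertex. (Concretely, if $c = e_1\cdots e_m$ is a cycle at $v$ with an exit $f$ at $s(e_j) = r(e_{j-1})$, then multiplying by the appropriate prefix of $c$ followed by $f^\ast$ annihilates the powers of $c$ while leaving a vertex term surviving, by $(CK1)$: $f^\ast e_j = 0$.)

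The main obstacle I anticipate is the reduction step itself: controlling the combinatorics of an arbitrary element $x = \sum k_i \gamma_i \lambda_i^\ast$ under multiplication so that one genuinely decreases complexity without accidentally cancelling everything to zero. One must argue carefully that among the monomials $\gamma_i \lambda_i^\ast$ there is a choice of $\gamma_i$ (say of maximal length, or minimal in a suitable order) and a vertex $v = r(\gamma_i)$ so that $\gamma_i^\ast x \gamma_i$ is nonzero and has strictly simpler form --- this requires the $(CK1)$ relations to separate distinct edges and some bookkeeping to ensure no unwanted collision occurs. This is essentially the content of the ``reduction theorem'' for Leavitt path algebras (see \cite{AA} or \cite[Chapter 2]{AAS}), which guarantees that any nonzero $x$ can be transformed via such multiplications into an element of one of the two forms above; once that machinery is in place, the remaining argument --- splitting into the ``vertex'' case and the ``polynomial in a closed simple path'' case and invoking Condition $(L)$ in the latter --- is short. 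I would therefore structure the proof as: (1) invoke or prove the reduction lemma; (2) dispose of the vertex case; (3) in the cycle case, pick an exit and multiply it down to a vertex using $(CK1)$; conclude that $N \cap E^0 \neq \emptyset$.
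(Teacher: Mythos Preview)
Your proposal is correct and follows exactly the standard route --- the Reduction Theorem of \cite{AA} (also \cite[Theorem~2.2.11]{AAS}) --- which is precisely what the cited source does; note that the present paper is a survey and does not itself supply a proof, only the citation to \cite[Corollary~3.8]{AA}. One small sharpening: in the usual formulation of the Reduction Theorem the second alternative is already a nonzero polynomial in a cycle \emph{without exits}, so under Condition~$(L)$ that case is simply vacuous and one lands immediately in the vertex case --- the exit-multiplication trick you sketch is really part of the proof of the Reduction Theorem rather than a separate step afterward.
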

\begin{proposition} 
Let $E$ be an arbitrary graph and $K$ be any field. Let $H$ be a hereditary subset of $E^0$. Then there is a $\mathbb{Z}$-graded monomorphism $\varphi$ from $L_K(E_H)$ into $L_K(E)$ via
$v \mapsto v , e \mapsto e , e^{\ast} \mapsto e^{\ast} $ for all $ v \in E_{H}^{0} ,  e \in E_{H}^{1}$.
\end{proposition}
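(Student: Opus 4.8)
The plan is to invoke the universal property of the Leavitt path algebra $L_K(E_H)$: to produce a $K$-algebra homomorphism out of $L_K(E_H)$ it suffices to exhibit, inside $L_K(E)$, a family of elements indexed by $E_H^0 \cup E_H^1 \cup (E_H^1)^\ast$ satisfying the four defining relations (together with pairwise orthogonality of the vertex idempotents). The obvious candidates are the elements $v$, $e$, $e^\ast$ of $L_K(E)$ themselves, for $v \in E_H^0 = H$ and $e \in E_H^1 = \{e \in E^1 \mid s(e) \in H\}$. The first observation, and the only place where hereditariness is used, is that if $e \in E_H^1$ then $r(e) \in H$: indeed $s(e) \in H$ and $s(e) \geq r(e)$, so $r(e) \in H$ because $H$ is hereditary. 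Consequently the source and range functions of $E_H$ are genuinely the restrictions of those of $E$, the edges of $E_H$ are honest edges of $E$, the family $\{v \mid v \in H\}$ consists of pairwise orthogonal idempotents of $L_K(E)$, and relations (1), (2), (3) for the chosen elements hold in $L_K(E)$ simply because they hold there for all vertices and edges of $E$.

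The one relation needing a separate check is the (CK-2) relation, since it is sensitive to which vertices are \emph{regular}. The key point is that for $v \in H$ one has $s_{E_H}^{-1}(v) = \{e \in E_H^1 \mid s(e) = v\} = \{e \in E^1 \mid s(e) = v\} = s_E^{-1}(v)$, again because every edge of $E$ emitted by a vertex of $H$ already lies in $E_H^1$. Hence $v$ is a sink in $E_H$ iff it is a sink in $E$, $v$ is an infinite emitter in $E_H$ iff it is one in $E$, so $v$ is regular in $E_H$ iff it is regular in $E$; moreover the sum $\sum_{e \in E_H^1,\, s(e)=v} ee^\ast$ is literally $\sum_{e \in E^1,\, s(e)=v} ee^\ast$. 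Thus (CK-2) in $L_K(E_H)$ is an instance of (CK-2) in $L_K(E)$ and holds. By the universal property we obtain a $K$-algebra homomorphism $\varphi \colon L_K(E_H) \to L_K(E)$ with $\varphi(v)=v$, $\varphi(e)=e$, $\varphi(e^\ast)=e^\ast$. Since these assignments respect the generating degrees ($0$, $1$, $-1$) of the standard $\mathbb{Z}$-gradings, $\varphi$ is $\mathbb{Z}$-graded, i.e.\ $\varphi$ carries the degree-$n$ component of $L_K(E_H)$ into the degree-$n$ component of $L_K(E)$ for every $n$.

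It remains to prove $\varphi$ is injective, and here I would appeal to the Graded Uniqueness Theorem for Leavitt path algebras: a $\mathbb{Z}$-graded ring homomorphism out of $L_K(F)$ that is nonzero on every vertex of $F$ is automatically injective. In our case $\varphi(v) = v \neq 0$ in $L_K(E)$ for each $v \in H = E_H^0$, so the hypotheses are met and $\varphi$ is a monomorphism. (If one prefers not to cite that theorem, injectivity can instead be obtained by a direct grading/normal-form argument: a nonzero homogeneous element of $\ker\varphi$ would, after multiplication by suitable vertices and paths, yield a nonzero element of $K\cdot v$ for some $v \in H$ inside $\ker\varphi$, contradicting $\varphi(v)=v\neq 0$.) The only real obstacle is the bookkeeping in the (CK-2) step — ensuring regularity of a vertex is neither lost nor gained when passing between $E$ and $E_H$ — and this is precisely what hereditariness of $H$ guarantees.
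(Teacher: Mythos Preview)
Your proof is correct and follows the standard route: verify that the obvious assignment of generators yields a Leavitt $E_H$-family inside $L_K(E)$ (with hereditariness of $H$ guaranteeing that $s_{E_H}^{-1}(v)=s_E^{-1}(v)$, so regularity and (CK-2) are preserved), invoke the universal property to obtain a graded homomorphism, and then apply the Graded Uniqueness Theorem for injectivity. The paper itself is a survey and states this proposition without proof, so there is no in-paper argument to compare against; the approach you give is exactly the one used in the primary literature (e.g.\ \cite{AAS}, \cite{T}).
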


We give a description of the elements in the ideal generated by a hereditary subset of vertices.
\begin{lemma}(\cite[Lemma 5.6]{T}) 
	Let $E$ be an arbitrary graph and $K$ be any field. 
	
	(i) Let $H$ be a hereditary subset of $E^0$. Then the ideal $I(H)$ is
	\[
	I(H)= span_K (\{\gamma \lambda^{\ast} |\gamma , \lambda \in Path(E) \text{ such that } r(\gamma) = r(\lambda) \in H \})\]
	\[
	=\Big \{ \displaystyle\sum_{i=1}^{n}k_i \gamma_i \lambda_{i}^{\ast} | n \geq 1 , k_i \in K, \gamma_i , \lambda_i \in Path(E) \text{ such that } r(\gamma_i) = r(\lambda_i) \in H \Big\}
	\]
	(ii) Let $H$ be a hereditary subset of $E^0$ and $S$ a subset of $B_H$. Then the ideal 
	$$I(H, S)=span_K (\{\gamma \lambda^{\ast} |\gamma , \lambda \in Path(E) \text{ such that } r(\gamma) = r(\lambda) \in H \})$$
	 $$ + span_K (\{\alpha v^H \beta^{\ast} | \alpha , \beta \in Path(E) \text{ and } v \in S \}).$$
	 
	\end{lemma}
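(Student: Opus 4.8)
The plan is to prove both displayed equalities for $I(H)$ in part (i) coincide (they are the same set, a $K$-span being by definition the collection of finite $K$-linear combinations of its generators), and then to prove (i) and (ii) by the same two-step pattern. Write $A$ for the span appearing in (i) and $B$ for the sum of two spans appearing in (ii). In each case the inclusion of the right-hand side into the ideal is immediate, since every spanning monomial is visibly a product of ring elements with one of the prescribed generators; for the reverse inclusion I would show that the right-hand side is \emph{itself} a two-sided ideal containing the relevant generating set, so it must contain the ideal that set generates. Throughout I would use the standard rule for multiplying monomials in $L_K(E)$ (one of the two interior paths is an initial subpath of the other, or the product is $0$), which can be found in \cite{AAS}.

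For (i): the inclusion $A\subseteq I(H)$ holds because $\gamma\lambda^{\ast}=\gamma\,r(\gamma)\,\lambda^{\ast}$ with $r(\gamma)\in H\subseteq I(H)$. For $I(H)\subseteq A$: each $v\in H$ equals $v\,v^{\ast}$ ($v$ the trivial path at $v$), so $H\subseteq A$; $A$ is a $K$-subspace; and since $L_K(E)$ is spanned by monomials $\mu\nu^{\ast}$, it suffices to check $\mu\nu^{\ast}\cdot\gamma\lambda^{\ast}$ and $\gamma\lambda^{\ast}\cdot\mu\nu^{\ast}$ lie in $A$ whenever $r(\gamma)=r(\lambda)\in H$. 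Applying the multiplication rule to $\nu^{\ast}\gamma$ (resp.\ $\lambda^{\ast}\mu$), every nonzero product is again a single monomial $\mu'\lambda'^{\ast}$ with $r(\mu')=r(\lambda')$, and one checks this common range vertex lies in $H$: it is reached by a path from $r(\gamma)$ or from $r(\lambda)$, both in $H$, so hereditariness of $H$ applies. The left-multiplication cases are handled by the symmetric bookkeeping. Hence $A$ is a two-sided ideal containing $H$, and $I(H)\subseteq A$.

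For (ii): by part (i) the first summand of $B$ is $I(H)$, so $B\subseteq I(H,S)$ because $I(H)\subseteq I(H,S)$ and $\alpha v^{H}\beta^{\ast}\in I(H,S)$ as $v^{H}\in S^{H}$. For $I(H,S)\subseteq B$: $H\subseteq I(H)\subseteq B$; each $v^{H}=v\,v^{H}\,v^{\ast}$ (trivial paths at $v$, using $v^{H}v=vv^{H}=v^{H}$), so $S^{H}\subseteq B$; $B$ is a subspace; and I would check closure of $B$ under left and right multiplication by monomials $\mu\nu^{\ast}$. On spanning monomials of the first summand this is exactly part (i), so the product stays in $I(H)\subseteq B$. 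On a spanning monomial $\alpha v^{H}\beta^{\ast}$ of the second summand, expanding $\nu^{\ast}\alpha$ (resp.\ $\beta^{\ast}\mu$) by the multiplication rule leaves a factor of the form $v^{H}\cdot\delta$ or $\delta^{\ast}\cdot v^{H}$ for some path $\delta$ based at $v$; using the identity $v^{H}=v-\sum_{e\in s^{-1}(v)\cap r^{-1}(E^{0}\setminus H)}ee^{\ast}$ one finds this factor is $0$ when $\delta$ begins with an edge whose range lies outside $H$, and equals $\delta$ (resp.\ $\delta^{\ast}$) otherwise; but in the latter case that first edge has range in $H$, so that range $\geq$ the new range vertex, and hereditariness puts the resulting monomial into the first summand $I(H)$. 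In the remaining cases the product is again of the form $\alpha'v^{H}\beta'^{\ast}$ with $r(\alpha')=r(\beta')=v$, hence in $B$. Collecting all cases, $B$ is a two-sided ideal containing $H\cup S^{H}$, so $I(H,S)\subseteq B$.

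The main obstacle is the case analysis in (ii): one must carefully distinguish the products in which the idempotent factor $v^{H}$ annihilates a term, those in which it collapses back to a path (triggering the hereditary argument that throws the term into $I(H)$), and those in which it genuinely yields a new monomial of the second type, all while respecting the composability constraints $r(\alpha)=v=r(\beta)$ and the validity requirement $r(\mu)=r(\nu)$ for a monomial $\mu\nu^{\ast}$. Part (i), and the first-summand sub-case of (ii), are routine by comparison.
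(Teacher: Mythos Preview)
The paper itself is a survey and does not supply a proof of this lemma; it merely records the statement with a citation to Tomforde. Your argument is the standard one (and is essentially Tomforde's): show the indicated span is contained in the ideal for trivial reasons, then verify directly that the span is a two-sided ideal containing the generating set, using the monomial multiplication rule together with hereditariness of $H$. The proof is correct.

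One small point of precision in part (ii): when you write that the factor $v^{H}\delta$ (resp.\ $\delta^{\ast}v^{H}$) ``equals $\delta$ (resp.\ $\delta^{\ast}$) otherwise'', this is literally true only when $\delta$ has positive length. If $\delta$ is the trivial path at $v$ (which occurs exactly when $\nu=\alpha$ or $\mu=\beta$), then $v^{H}\delta=\delta^{\ast}v^{H}=v^{H}$, and the resulting product $\mu v^{H}\beta^{\ast}$ or $\alpha v^{H}\nu^{\ast}$ lies in the second summand of $B$, not in $I(H)$. Your ``remaining cases'' clause in fact already absorbs this situation (since $\nu=\alpha$ can equally be read as $\alpha=\nu\cdot v$), so the argument is not damaged; but the phrasing should separate the length-zero case to avoid the apparent claim that $v^{H}=v$.
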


\subsection{Graded Ideals}
First, we mention the result on graded simplicity, that is when 
$L_K(E)$ has no non-trivial graded ideals. As stated in \cite[Cor.2.5.15]{AAS}, $L_K(E)$ is graded simple if and only if the only hereditary saturated subsets of $E^0$ are $\emptyset$ and $E^0$.
A typical example of a graded simple Leavitt path algebra is $K[x, x^{-1}]$, see Example \ref{loop}. 
However, since $\langle 1+x \rangle$ is a (non-graded) ideal, $K[x, x^{-1}]$ is not simple. Hence, it is possible to have non-trivial non-graded ideals in a graded simple ring.

Now, we are ready to describe the graded ideals in Leavitt path algebras which is in \cite[Remark 2.2]{APS1}.
\begin{theorem}
	Let $E$ be an arbitrary graph and $K$ be any field. Then every graded ideal $N$ of $L_K(E)$ is generated by $H \cup S^H$, where $H=N \cap E^0 \in \mathscr{H}_E$, and $S=\{ v \in B_H | v^H \in N \}$ , i.e. $N=I(H,S)$.
	
	In particular, every graded ideal of $L_K(E)$ is generated by a set of homogeneous idempotents.
\end{theorem}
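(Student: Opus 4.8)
The plan is to show that a graded ideal $N$ coincides with $I(H,S)$ for $H = N\cap E^0$ and $S = \{v\in B_H \mid v^H \in N\}$. First I would check that $(H,S)$ is an admissible pair: by the earlier lemma of \cite{AA}, $H = N\cap E^0 \in \mathscr{H}_E$, and $S\subseteq B_H$ by definition, so $I(H,S)$ is a well-defined graded ideal. The inclusion $I(H,S)\subseteq N$ is immediate, since $H\subseteq N$ and, for each $v\in S$, the element $v^H$ lies in $N$ by the choice of $S$; as $I(H,S)$ is the ideal generated by $H\cup S^H$, it sits inside $N$.

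The substance is the reverse inclusion $N\subseteq I(H,S)$. Here I would work inside the quotient algebra $L_K(E)/I(H,S)$, using the fact that this quotient is graded-isomorphic to the Leavitt path algebra $L_K(E/(H,S))$ of the quotient graph (this is the standard quotient-graph realization that accompanies the generalized hedgehog/quotient constructions recalled earlier; I will cite it from \cite{AAS}). Under this isomorphism, the image $\bar N$ of $N$ is a graded ideal of $L_K(E/(H,S))$. The key point is that $\bar N$ meets $(E/(H,S))^0$ trivially: a vertex of the quotient graph is either a vertex $v\in E^0\setminus H$ or a symbol $v'$ for $v\in B_H\setminus S$. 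If such a $v$ (resp.\ the vertex underlying $v'$) had nonzero image lying in $\bar N$, then pulling back one shows $v\in N$, hence $v\in H$, a contradiction; the $v'$ case similarly forces $v^H\in N$, hence $v\in S$, again a contradiction. So $\bar N \cap (E/(H,S))^0 = \emptyset$.

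It then remains to argue that a graded ideal of a Leavitt path algebra that contains no vertex must be zero. This is where I expect the main obstacle to lie, and it is the technical heart of the argument. The idea is: take a nonzero homogeneous element $x = \sum k_i\gamma_i\lambda_i^\ast \in \bar N$ (homogeneity is available because $\bar N$ is graded); using the CK-relations and multiplication by suitable vertices and ghost/real edges, one performs a ``reduction'' on $x$ to produce a nonzero element of the form $p(c,c^\ast)$ supported at a single vertex on a cycle (the Reduction Theorem / its consequences, as in \cite{AAS}), and then — invoking that passing to the quotient by $(H,S)$ does not destroy Condition-$(L)$-type behaviour at the surviving vertices is not quite enough, so more carefully — one uses the structure theory to extract an actual vertex of the quotient graph inside $\bar N$, contradicting the previous paragraph unless $x = 0$. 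Concretely I would cite the graded version of the ``every nonzero graded ideal contains a vertex'' phenomenon (the graded uniqueness theorem for Leavitt path algebras), which gives the conclusion directly; the honest work is in justifying that the hypotheses of that theorem are met by $L_K(E/(H,S))$ and $\bar N$.

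Finally, combining the two inclusions gives $N = I(H,S)$. For the ``in particular'' clause, I would simply observe that $H$ is a set of idempotents (each vertex is idempotent by definition of $L_K(E)$), and each $v^H$ for $v\in S$ is idempotent as well: $v^H = v - \sum_{e} ee^\ast$ is homogeneous of degree $0$, and a direct computation using the CK-1 relations ($e^\ast e = r(e)$, $e^\ast f = 0$ for $e\neq f$) and orthogonality of the vertices shows $(v^H)^2 = v^H$. Hence $N = I(H,S)$ is generated by the set of homogeneous idempotents $H\cup S^H$, as claimed.
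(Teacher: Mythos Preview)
The paper is a survey and does not include its own proof of this theorem; it merely cites the result from \cite{APS1} (the general case being Tomforde's Theorem~5.7 in \cite{T}). Your sketch is correct and is precisely the standard argument one finds in those sources: show $I(H,S)\subseteq N$ trivially, pass to the quotient $L_K(E)/I(H,S)\cong L_K(E/(H,S))$, observe that the image $\bar N$ contains no vertex of the quotient graph (your analysis of the two types of vertices $v$ and $v'$ is right, using $I(H,S)\subseteq N$ to lift membership in $\bar N$ back to membership in $N$), and then invoke the graded uniqueness theorem to conclude $\bar N=0$.

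Two small remarks. First, the detour through the Reduction Theorem and elements of the form $p(c,c^\ast)$ is a red herring here: that machinery is what one needs for \emph{non}-graded ideals. For graded ideals the reduction is simpler --- a nonzero homogeneous element can be compressed by real and ghost paths directly to a nonzero vertex --- and this is exactly the content of the graded uniqueness theorem you end up citing. Second, be mindful of logical order: in this survey the isomorphism $L_K(E)/I(H,S)\cong L_K(E/(H,S))$ is stated \emph{after} the theorem you are proving, so if you were writing a self-contained account you would need to establish that isomorphism first (it is proved independently via the universal property and graded uniqueness, so there is no circularity, but the dependency should be made explicit).
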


Observe that if $N = I(H,S)$ is a graded ideal, so that $N= \langle H,v^H:v \in S \rangle$, the generators $u$ in $H$ and $v^H$ are all idempotents. So they all belong to $N^2$, that is if $N$ is a graded ideal, then $N = N^2$. Conversely, if $N$ is an ideal such that 
$N = N^2$, we use a result from \cite{EKR}. In \cite[Theorem 3.6]{EKR}, it was shown that for any ideal $N$, 
the intersection of $\{N^n: n > 0\}$ is a graded ideal. So, if $N^2 = N$, then $N =  \cap \{N^n \ :\  n > 0\}$ is a graded ideal. Thus we obtain the following characterization of graded ideals of a Leavitt path algebra (which also appears in \cite[Cor. 2.9.11]{AAS} via a different proof.)
\begin{theorem} 
Let $E$ be an arbitrary graph and $K$ be any field. Then, an ideal $N$ of $L_K(E)$ is graded if and only if $N^2 = N$.
\end{theorem}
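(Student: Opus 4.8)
The plan is to prove both implications essentially by assembling the structural results already recorded in the excerpt, so that only one short computation is genuinely needed. For the forward direction, assume $N$ is a graded ideal. By the theorem immediately preceding the statement, $N = I(H,S)$, where $H = N \cap E^0 \in \mathscr{H}_E$ and $S = \{v \in B_H \mid v^H \in N\}$; in particular $N$ is the ideal generated by $H \cup S^H$. Now each vertex $u \in H$ satisfies $u = u^2$, and I would include the routine verification, using the $(\mathrm{CK}\text{-}1)$ and $(\mathrm{CK}\text{-}2)$ relations, that each $v^H = v - \sum_{e \in s^{-1}(v) \cap r^{-1}(E^0 \setminus H)} ee^\ast$ is likewise an idempotent. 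Consequently every element of the generating set $H \cup S^H$ lies in $N^2 = N \cdot N$. Since $N^2$ is itself a two-sided ideal (automatically so, because $N$ is), is contained in $N$, and contains a generating set of $N$, we get $N \subseteq N^2$, hence $N = N^2$.

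For the converse, assume $N^2 = N$. A trivial induction then gives $N^n = N$ for every $n \geq 1$, so $\bigcap_{n>0} N^n = N$. By \cite[Theorem 3.6]{EKR}, for any ideal of $L_K(E)$ the intersection of its positive powers $\bigcap_{n>0} N^n$ is a graded ideal; applying this to $N$ shows that $N$ is graded. This completes the equivalence.

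I do not expect a serious obstacle here. The only step requiring any care is the idempotency of the homogeneous generators $v^H$ in the forward direction, which is a brief check with the Cuntz--Krieger relations (and is in any case asserted in the discussion preceding the statement); everything else is a direct appeal to the description of graded ideals as those generated by $H \cup S^H$ and to the cited fact about intersections of powers of ideals. One should simply be explicit that $N^2 = N$ is meant as an equality of ideals, with $N^2 = \{\sum_i a_i b_i \mid a_i, b_i \in N\}$, so that the argument above applies verbatim.
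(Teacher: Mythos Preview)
Your proposal is correct and follows essentially the same route as the paper: for the forward direction you use that the generators $u\in H$ and $v^H$ of a graded ideal $N=I(H,S)$ are idempotents, hence lie in $N^2$, and for the converse you invoke \cite[Theorem~3.6]{EKR} to conclude that $N=\bigcap_{n>0}N^n$ is graded. The only difference is that you spell out a bit more detail (the idempotency of $v^H$ via the CK relations and the induction $N^n=N$), which the paper leaves implicit.
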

\medskip

The correspondence between the quotient Leavitt path algebra and the Leavitt path algebra of the quotient graph is noteworthly to state at this point. Part (i) of the following theorem appears as \cite[Lemma 2.3]{APS1} and part (ii) appears in \cite[Theorem 5.7]{T}.
\begin{theorem}
	Let $K$ be any field,
	\begin{enumerate}
		\item [(i)]  $E$ be a row-finite graph, and $H \in \mathscr{H}_E$. Then $L_K(E) / I(H) \cong L_K(E/H)$ as $\mathbb{Z}$-graded $K$-algebras.
		\item [(ii)] $E$ be an arbitrary graph,  $H \in \mathscr{H}_E$ and $S \subset B_H$. Then $L_K(E)/I(H,S) \cong L_K(E/(H,S))$ as 
	$\mathbb{Z}$-graded $K$-algebras.	
		\end{enumerate}
\end{theorem}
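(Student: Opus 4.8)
The plan is to produce, in each part, two mutually inverse $\mathbb{Z}$-graded $K$-algebra homomorphisms by prescribing them on the canonical generators and invoking the presentation of a Leavitt path algebra by the generators and relations (1)--(4): a $K$-algebra map out of $L_K(F)$ is determined by, and exists for, any choice in the target of pairwise orthogonal idempotents $\{q_v:v\in F^0\}$ and elements $\{t_e,t_e^\ast:e\in F^1\}$ satisfying those relations. In every instance below each generator is sent to a homogeneous element of the same degree, so the resulting bijection is automatically $\mathbb{Z}$-graded; thus the content lies entirely in checking the defining relations (chiefly (CK-2)) and that the two maps compose to the identity on generators. Observe that when $E$ is row-finite one has $B_H=\emptyset$, hence $S=\emptyset$, $I(H,S)=I(H)$ and $E/(H,S)=E/H$, so (i) is the row-finite case of (ii); it is nonetheless convenient to dispose of (i) first.

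For (i), define $\phi\colon L_K(E/H)\to L_K(E)/I(H)$ on generators by $v\mapsto v+I(H)$, $e\mapsto e+I(H)$, $e^\ast\mapsto e^\ast+I(H)$. Relations (1)--(3) are inherited verbatim from $L_K(E)$. For (CK-2) one uses two observations: $ee^\ast=e\,r(e)\,e^\ast\in I(H)$ whenever $r(e)\in H$; and a vertex of $E^0\setminus H$ regular in $E$ stays regular in $E/H$, since it cannot become a sink (saturation of $H$ would then force it into $H$) and, $E$ being row-finite, cannot become an infinite emitter. Hence the (CK-2) sum at such a vertex collapses, modulo $I(H)$, to the (CK-2) sum of $E/H$. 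In the other direction, define $\psi\colon L_K(E)\to L_K(E/H)$ by $v\mapsto v$ or $0$ according as $v\notin H$ or $v\in H$, and $e\mapsto e$, $e^\ast\mapsto e^\ast$ or $0$ according as $r(e)\notin H$ or $r(e)\in H$; the same two observations yield the relations, and $I(H)\subseteq\ker\psi$ since $H\mapsto 0$, so $\psi$ descends to $\bar\psi\colon L_K(E)/I(H)\to L_K(E/H)$. One checks $\phi$ and $\bar\psi$ are mutually inverse on generators.

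For (ii), on the ``old'' data ($v\in E^0\setminus H$, and edges $e$ with $r(e)\notin H\cup(B_H\setminus S)$) the maps are as in (i); around the breaking vertices they are twisted by the idempotents $v^H$. One is led to let $\phi\colon L_K(E/(H,S))\to L_K(E)/I(H,S)$ send $v'\mapsto v^H+I(H,S)$ for $v\in B_H\setminus S$, an old vertex $v\in B_H\setminus S$ to $(v-v^H)+I(H,S)$, an edge $e$ with $r(e)\in B_H\setminus S$ to $(e-e\,r(e)^H)+I(H,S)$ and its partner $e'$ to $e\,r(e)^H+I(H,S)$, together with the evident images of the stars and of the remaining (untwisted) vertices and edges; and to let $\psi\colon L_K(E)\to L_K(E/(H,S))$ send $v\in H$ to $0$, $v\in B_H\setminus S$ to $v+v'$, an edge with $r(e)\in H$ to $0$, an edge with $r(e)\in B_H\setminus S$ to $e+e'$, and fix the remaining generators. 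One then verifies that these are Leavitt families (the elements $v^H$ and $v-v^H$ are orthogonal idempotents by virtue of $v\,v^H=v^H v=v^H v^H=v^H$ and the definition of $v^H$), that (CK-2) is respected at every vertex regular in $E/(H,S)$ --- these are the $E$-regular vertices of $E^0\setminus H$ together with the breaking vertices, which become regular in the quotient (emitting only their finitely many edges into $E^0\setminus H$ and the paired primed edges), the primed vertices $v'$ being sinks that impose nothing --- and that $\psi$ annihilates the generators $H\cup\{v^H:v\in S\}$ of $I(H,S)$; the crucial point $\psi(v^H)=0$ for $v\in S$ is exactly the image of the (CK-2) relation now available at $v$ in $E/(H,S)$. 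Passing to $\bar\psi\colon L_K(E)/I(H,S)\to L_K(E/(H,S))$ and comparing with $\phi$ on generators completes the argument.

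The bulk of this is routine relation-chasing. The one genuinely delicate step --- presumably the reason part (ii) is cited from a different source than part (i) --- is the breaking-vertex bookkeeping: determining which vertices of $E/(H,S)$ become regular and what their (CK-2) relations are, and calibrating the images of the old vertex $v\in B_H$, the new vertex $v'$, and the surrounding edges so that simultaneously all CK-relations hold, both maps are well defined, and precisely the generators $v^H$ ($v\in S$) of $I(H,S)$ are killed by $\psi$. As an alternative to displaying the explicit inverse: once $\phi$ is seen to be a well-defined graded surjection that is nonzero on every vertex of $E/(H,S)$ --- immediate from $I(H,S)\cap E^0=H$ and from $v^H\notin I(H,S)$ when $v\notin S$ --- its injectivity follows at once from the Graded Uniqueness Theorem.
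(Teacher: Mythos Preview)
The paper is a survey and does not supply its own proof of this theorem; it merely records the statement and cites \cite[Lemma~2.3]{APS1} for part~(i) and \cite[Theorem~5.7]{T} for part~(ii). So there is no in-paper argument against which to compare.

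That said, your proposal is correct and is essentially the standard argument found in those references: build a graded homomorphism out of $L_K(E/(H,S))$ via its universal property by exhibiting a Leavitt $E/(H,S)$-family in the quotient $L_K(E)/I(H,S)$, and produce the inverse by the analogous construction (or, as you note, appeal to the Graded Uniqueness Theorem). Your identification of the delicate point --- the bookkeeping at breaking vertices, where one must split $v\in B_H\setminus S$ as the orthogonal sum $(v-v^H)+v^H$ corresponding to the pair $v,\,v'$ in the quotient graph, and then match the (CK-2) relations --- is exactly right, and your treatment of it is sound. One small remark on your alternative ending: to conclude $\phi(v)=(v-v^H)+I(H,S)\neq 0$ for $v\in B_H\setminus S$ directly (without already having $\bar\psi$), note that for any edge $e$ with $s(e)=v$ and $r(e)\notin H$ one has $e^\ast(v-v^H)e=r(e)$, so $v-v^H\in I(H,S)$ would force $r(e)\in I(H,S)\cap E^0=H$, a contradiction; this is the missing line behind your phrase ``immediate from $I(H,S)\cap E^0=H$.''
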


\subsection{The Structure Theorem of Graded Ideals}
Now, we are ready to give a complete description of the lattice of graded ideals of a Leavitt path algebra in terms of specified subsets of $E^0$, that is the Structure Theorem for Graded Ideals. The results in this section first appeared for row-finite graphs in \cite{AMP} and 
for arbitrary graphs in \cite{T}.
\begin{definition}\rm
	Let $E$ be an arbitrary graph and $K$ be any field. Denote $\mathscr{L}_{gr}(L_K(E))$ the lattice of graded ideals of $L_K(E)$, whose order is inclusion, also supremum and infimum are the usual operations of ideal sum and intersection.
\end{definition}
\begin{remark}\rm
	Let $E$ be an arbitrary graph. We define in $\mathscr{H}_E$ a partial order by setting $ H \leq H' $ in case $ H \subseteq H' $. So, $\mathscr{H}_E$ is a complete lattice, with supremum $\vee$ and infimum $\wedge$ in $\mathscr{H}_E$ given by setting $\vee_{i \in \Gamma} H_i := \overline{\cup_{i \in \Gamma} H_i}$ and $\wedge_{i \in \Gamma} H_i := \cap_{i \in \Gamma} H_i$ respectively.
\end{remark}
\begin{definition}\rm
	Let $E$ be an arbitrary graph. We set 
	$$\mathscr{S}=\displaystyle\bigcup_{H \in \mathscr{H}_E} \mathscr{P}(B_H),$$
	where $\mathscr{P}(B_H)$ denotes the set of all subset of $B_H$. 
	
	We denote by $\mathscr{T}_E$ the subset of $\mathscr{H}_E \times \mathscr{S}$ consisting of pairs of the form $(H,S)$, where $S \in \mathscr{P}(B_H)$. We define in $\mathscr{T}_E$ the following relation:
	$$ (H_1,S_1) \leq (H_2,S_2) \text{ if and only if } H_1 \subseteq H_2 \text{ and } S_1 \subseteq H_2 \cup S_2 .$$

\end{definition}
\begin{proposition}
	Let $E$ be an arbitrary graph. For $(H_1,S_1),(H_2,S_2) \in \mathscr{T}_E$, we have 
	$$(H_1,S_1) \leq (H_2,S_2) \Longleftrightarrow I(H_1, S_{1}) \subseteq I(H_2,S_{2}).$$
	In particular, $\leq$ is a partial order on $\mathscr{T}_E$.
\end{proposition}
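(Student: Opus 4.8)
The plan is to prove the biconditional $(H_1,S_1)\le (H_2,S_2)\Longleftrightarrow I(H_1,S_1)\subseteq I(H_2,S_2)$ by establishing the two implications separately, using throughout the explicit description of $I(H,S)$ from Lemma~2.4 (the $\mathrm{span}_K$ description), the fact (stated earlier) that $I(H,S)$ is the ideal generated by $H\cup S^H$, and the quotient isomorphism $L_K(E)/I(H,S)\cong L_K(E/(H,S))$.

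For the forward direction, assume $H_1\subseteq H_2$ and $S_1\subseteq H_2\cup S_2$. Since $I(H_2,S_2)$ is an ideal, it suffices to show every generator of $I(H_1,S_1)$ lies in $I(H_2,S_2)$; that is, $H_1\subseteq I(H_2,S_2)$ and $v^{H_1}\in I(H_2,S_2)$ for each $v\in S_1$. The first is immediate because $H_1\subseteq H_2\subseteq I(H_2,S_2)$. For the second, fix $v\in S_1$. Then $v\in H_2\cup S_2$. If $v\in H_2$, then $v\in I(H_2,S_2)$, and since $v^{H_1}=v-\sum_{e\in s^{-1}(v)\cap r^{-1}(E^0\backslash H_1)}ee^{\ast}=v\cdot v^{H_1}$, we get $v^{H_1}\in I(H_2,S_2)$. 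The remaining case is $v\in S_2$: here I would compare $v^{H_1}$ with $v^{H_2}\in I(H_2,S_2)$, writing $v^{H_1}-v^{H_2}=\sum ee^{\ast}$ where the sum runs over edges $e$ with $s(e)=v$ and $r(e)\in H_2\backslash H_1$; since each such $r(e)\in H_2\subseteq I(H_2,S_2)$ and $ee^{\ast}=e\,r(e)\,e^{\ast}$, this difference lies in $I(H_2,S_2)$, hence so does $v^{H_1}$. (One must check $B_{H_1}$ and $B_{H_2}$ are compatible enough for this bookkeeping to make sense — that $v$ being an infinite emitter not in $H_1$ and with $v\in S_2\subseteq B_{H_2}$ forces the relevant edge sets to differ only by edges into $H_2\backslash H_1$ — which is the one genuinely fiddly point.)

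For the converse, assume $I(H_1,S_1)\subseteq I(H_2,S_2)$. To get $H_1\subseteq H_2$: for $v\in H_1\subseteq I(H_1,S_1)\subseteq I(H_2,S_2)$, pass to the quotient $L_K(E)/I(H_2,S_2)\cong L_K(E/(H_2,S_2))$; the image of $v$ is $0$, but if $v\notin H_2$ then $v$ (or its ``primed'' copy) survives as a nonzero vertex idempotent in $L_K(E/(H_2,S_2))$ — vertices are never zero in a Leavitt path algebra — a contradiction, so $v\in H_2$. To get $S_1\subseteq H_2\cup S_2$: take $v\in S_1$, so $v^{H_1}\in I(H_2,S_2)$; again pass to the quotient. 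If $v\notin H_2$, then $v$ persists as a vertex $v'$ in $E/(H_2,S_2)$, and the image of $v^{H_1}$ in $L_K(E/(H_2,S_2))$ is computed from the edges $e$ with $s(e)=v$, $r(e)\notin H_1$; using the explicit form of $E/(H_2,S_2)$ and the CK-relations, this image is $0$ only if in fact all edges from $v$ with range outside $H_1$ also have range outside $H_2$ except finitely many and that partial sum matches the full one — which pins down exactly that $v\in B_{H_2}$ and $v\in S_2$. This quotient-image computation is where I expect the real work to be, and it is the main obstacle: one has to track carefully how $v^{H_1}$ maps under the composite $L_K(E)\twoheadrightarrow L_K(E)/I(H_2,S_2)\cong L_K(E/(H_2,S_2))$, distinguishing the subcases $v\in H_2$, $v\in B_{H_2}\backslash S_2$ (where $v'$ appears), and $v\in S_2$, and verifying the image is nonzero in precisely the cases that would violate $S_1\subseteq H_2\cup S_2$.

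Finally, the ``in particular'' clause is essentially free once the biconditional is in hand: reflexivity and transitivity of $\le$ on $\mathscr{T}_E$ follow from reflexivity and transitivity of $\subseteq$ on ideals, and antisymmetry follows because $I(H_1,S_1)=I(H_2,S_2)$ forces, by the forward computation applied in both directions, $H_1=H_2=:H$ and then $S_1,S_2\subseteq H\cup S_2$ and $\subseteq H\cup S_1$, which together with $S_i\subseteq B_H$ (disjoint from $H$) gives $S_1=S_2$. I would state this last step briefly rather than belabor it.
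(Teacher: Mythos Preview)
The paper is a survey and does not actually prove this proposition: it states the result and immediately says ``For more details on the lattice structure of $\mathscr{T}_E$, see \cite{AAS}.'' So there is no proof in the paper to compare your attempt against.

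Evaluated on its own, your argument is essentially correct. The forward direction is the standard one and is fine (the difference $v^{H_1}-v^{H_2}$ carries a minus sign you dropped, but that is irrelevant). For the converse you take an unnecessarily circuitous route through the quotient isomorphism $L_K(E)/I(H_2,S_2)\cong L_K(E/(H_2,S_2))$. A shorter path uses directly the recovery facts that underlie the Structure Theorem stated immediately after the proposition: for any admissible pair $(H,S)$ one has $I(H,S)\cap E^{0}=H$ and $\{v\in B_H : v^{H}\in I(H,S)\}=S$. From $I(H_1,S_1)\subseteq I(H_2,S_2)$ this gives $H_1\subseteq I(H_2,S_2)\cap E^{0}=H_2$ at once; and for $v\in S_1$ with $v\notin H_2$ one has
\[
v^{H_2}=v^{H_1}+\sum_{\substack{s(e)=v\\ r(e)\in H_2\setminus H_1}} ee^{\ast}\in I(H_2,S_2),
\]
so $v\in S_2$ once you know $v\in B_{H_2}$. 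That last membership is the only point you do not fully nail down: you need to rule out the borderline case where \emph{every} edge out of $v$ lands in $H_2$ (so $v\notin B_{H_2}$). But in that case each $ee^{\ast}$ in the displayed sum already lies in $I(H_2,S_2)$, whence $v=v^{H_1}+\sum ee^{\ast}\in I(H_2,S_2)\cap E^{0}=H_2$, contradicting $v\notin H_2$. With that one line added, the converse is complete without ever invoking the quotient graph. Your treatment of the ``in particular'' clause is fine.
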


Fore more details on the lattice structure of $\mathscr{T}_E$, see \cite{AAS}.

\begin{theorem} (\cite[Theorem 5.7]{T})
	%\textbf{(The Structure Theorem for Graded Ideals)} 
	Let $E$ be an arbitrary graph and $K$ be any field. Then the map $\varphi$ given here provides a lattice isomorphism:
	$$ \varphi : \mathscr{L}_{gr}(L_K(E)) \to \mathscr{T}_E \quad \text{ via } \quad I \mapsto (I \cap E^0,S).$$
	where $S=\{v \in B_H | v^H \in I\}$ for $H=I \cap E^0$. The inverse $\varphi'$ of $\varphi$ is given by:
	$$\varphi' : \mathscr{T}_E \to \mathscr{L}_{gr}(L_K(E)) \quad \text{ via } \quad (H,S) \mapsto I(H \cup S^H).$$
\end{theorem}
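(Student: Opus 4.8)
The plan is to establish that $\varphi$ and $\varphi'$ are mutually inverse order-preserving bijections between $\mathscr{L}_{gr}(L_K(E))$ and $\mathscr{T}_E$; combined with the already-stated Proposition that $\leq$ on $\mathscr{T}_E$ corresponds to inclusion of ideals, this upgrades the bijection to a lattice isomorphism. First I would check that $\varphi$ is well-defined: given a graded ideal $I$, the earlier Lemma of Abrams--Aranda Pino gives $H := I \cap E^0 \in \mathscr{H}_E$, and $S := \{v \in B_H \mid v^H \in I\}$ is by construction a subset of $B_H$, so $(H,S) \in \mathscr{T}_E$. Well-definedness of $\varphi'$ is immediate since $I(H \cup S^H) = I(H,S)$ is visibly a graded ideal (it is generated by the homogeneous-degree-$0$ idempotents $H \cup S^H$, as noted in the structure theorem for graded ideals quoted above).

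Next I would verify $\varphi' \circ \varphi = \mathrm{id}$. This is exactly the content of the earlier theorem stating that every graded ideal $N$ satisfies $N = I(H,S)$ with $H = N \cap E^0$ and $S = \{v \in B_H \mid v^H \in N\}$: applying $\varphi$ then $\varphi'$ to $N$ returns $I(H \cup S^H) = I(H,S) = N$. For the other composite $\varphi \circ \varphi' = \mathrm{id}$, I would take $(H,S) \in \mathscr{T}_E$, set $N := I(H,S)$, and must show $N \cap E^0 = H$ and $\{v \in B_H \mid v^H \in N\} = S$. The inclusions $H \subseteq N \cap E^0$ and $S \subseteq \{v \in B_H \mid v^H \in N\}$ are trivial. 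For the reverse inclusions I would invoke the explicit description of $I(H,S)$ from Lemma (\cite[Lemma 5.6]{T}) quoted above as a span of terms $\gamma\lambda^{\ast}$ with $r(\gamma)=r(\lambda)\in H$ together with terms $\alpha v^H \beta^{\ast}$ with $v \in S$: a grading/degree argument shows that a vertex $u \in N$ forces $u$ to be expressible using such terms, and using the CK-relations together with the hereditary and saturated nature of $H$ one deduces $u \in H$; similarly, if $w^H \in N$ for some $w \in B_H$, comparing the canonical form forces $w \in S$. This bookkeeping with normal forms of elements of $L_K(E)$ is the main obstacle — it is routine in spirit but requires care with the CK-2 relations at breaking vertices and with the fact that $w^H$ for $w \notin S$ is genuinely not in $I(H,S)$.

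Finally, order preservation in both directions is precisely the stated Proposition: $(H_1,S_1) \le (H_2,S_2) \iff I(H_1,S_1) \subseteq I(H_2,S_2)$, i.e. $\varphi'$ is an order isomorphism onto its image and $\varphi$ its inverse, so both are order-preserving. Since an order isomorphism between lattices automatically preserves $\vee$ and $\wedge$ (joins and meets are characterized purely by the order), $\varphi$ is a lattice isomorphism, and one can additionally note that it sends $\bigvee I_i$ to $\bigvee (H_i, S_i)$ and $\bigwedge I_i$ to $\bigwedge(H_i,S_i)$ in the complete-lattice sense described in the preceding remarks. This completes the proof.
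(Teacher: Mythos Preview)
The paper is a survey and does not supply its own proof of this theorem; it simply quotes the result from Tomforde \cite[Theorem 5.7]{T}. So there is no in-paper argument to compare against, and your outline is the natural one: invoke the structure theorem for graded ideals to get $\varphi'\circ\varphi=\mathrm{id}$, verify $\varphi\circ\varphi'=\mathrm{id}$ directly, and then use the stated Proposition on $\leq$ versus $\subseteq$ to upgrade the bijection to an order (hence lattice) isomorphism.

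Your sketch is correct, and you have correctly located the only nontrivial point: showing that $I(H,S)\cap E^0=H$ and that $w\in B_H$ with $w^H\in I(H,S)$ forces $w\in S$. The normal-form bookkeeping you describe does work, but in Tomforde's original argument (and in the surrounding results the survey quotes) the cleaner route is via the quotient: one uses the $\mathbb{Z}$-graded isomorphism $L_K(E)/I(H,S)\cong L_K(E/(H,S))$ together with the fact that distinct vertices of $E/(H,S)$ are nonzero and linearly independent in $L_K(E/(H,S))$. From this, any vertex $u\in E^0\setminus H$ maps to a nonzero vertex of the quotient, so $u\notin I(H,S)$; and for $w\in B_H\setminus S$ the image of $w^H$ is the nonzero vertex $w'$ in $E/(H,S)$, so $w^H\notin I(H,S)$. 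This sidesteps the delicate CK-2 manipulations at breaking vertices that you flagged. Either approach is fine; the quotient method is shorter and is the one implicitly packaged in the results the survey cites.
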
 
\begin{theorem}(\cite[Theorem 5.3]{AMP})
	Let $E$ be a row-finite graph and $K$ be any field. The following map $\varphi$ provides a lattice isomorphism:
	$$ \varphi : \mathscr{L}_{gr} (L_K(E)) \to \mathscr{H}_E \quad \text{ via } \quad \varphi(I)=I \cap E^0,$$
	with inverse given by
	$$ \varphi' : \mathscr{H}_E \to \mathscr{L}_{gr} (L_K(E)) \quad \text{ via } \quad \varphi'(H)=I(H).$$
\end{theorem}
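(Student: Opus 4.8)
The plan is to deduce this from the arbitrary-graph version stated just above. For a row-finite graph $E$ there are no infinite emitters, so $B_H=\emptyset$ for every $H\in\mathscr H_E$; hence $\mathscr S=\{\emptyset\}$ and $\mathscr T_E=\{(H,\emptyset):H\in\mathscr H_E\}$, which is order-isomorphic — and therefore lattice-isomorphic — to $\mathscr H_E$ via $(H,\emptyset)\mapsto H$. Under this identification the map of the preceding theorem carries a graded ideal $I$ to $I\cap E^0$ (its second coordinate $S=\{v\in B_H\mid v^H\in I\}$ is forced to be empty) and carries $H$ back to $I(H\cup\emptyset^H)=I(H)$, so the asserted isomorphism is exactly that specialization. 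That said, I would also want to record the direct argument, both because it is the original one and because it isolates cleanly where the work is.

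For the direct proof, first I would note that a bijection between lattices which is monotone with monotone inverse is automatically a lattice isomorphism; since $H\subseteq H'\Rightarrow I(H)\subseteq I(H')$ and $I\subseteq I'\Rightarrow I\cap E^0\subseteq I'\cap E^0$ both hold trivially, the whole statement reduces to showing that $\varphi$ and $\varphi'$ are well defined and mutually inverse. Well-definedness is quick: $I(H)$ is spanned by the homogeneous elements $\gamma\lambda^\ast$ with $r(\gamma)=r(\lambda)\in H$ by \cite[Lemma~5.6]{T}, hence is a graded subspace and so a graded ideal; and $I\cap E^0$ is hereditary by \cite[Lemma~3.9]{AA} and saturated because, for a regular vertex $v$ with $r(e)\in I$ whenever $s(e)=v$, each $ee^\ast=e\,r(e)\,e^\ast$ lies in $I$, and summing over the finitely many such $e$ and applying (CK-2) gives $v\in I$. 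The identity $I(H)\cap E^0=H$ then has the obvious inclusion $\supseteq$, while for $\subseteq$ one pushes $v\in I(H)\cap E^0$ through $L_K(E)\to L_K(E)/I(H)\cong L_K(E/H)$ (the row-finite quotient isomorphism recalled above): its image is $0$, but if $v\notin H$ that image is the vertex $v$ of $E/H$, a nonzero idempotent — contradiction, so $v\in H$.

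The remaining identity $I(I\cap E^0)=I$ for a graded ideal $I$ is where the real content lies; writing $H:=I\cap E^0$, the inclusion $I(H)\subseteq I$ is immediate from $H\subseteq I$, and I expect $I\subseteq I(H)$ to be the main obstacle. My plan is to pass to $\bar L:=L_K(E)/I(H)\cong L_K(E/H)$ and show the image $\bar I$ of $I$ vanishes. Now $\bar I$ is a graded ideal of $\bar L$ and $\bar I\cap(E/H)^0=\emptyset$ — a vertex $v\in E^0\setminus H$ lying in $\bar I$ would pull back into $I+I(H)=I$, forcing $v\in I\cap E^0=H$ — so it suffices to prove that a graded ideal $J$ of a row-finite Leavitt path algebra meeting the vertex set trivially is zero. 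For that I would pick a nonzero homogeneous $x\in J$ (possible since $J=\bigoplus_n(J\cap L_n)$) and invoke the standard reduction lemma from \cite{AAS}: there are paths $\mu,\nu$ with $0\neq\mu^\ast x\nu\in vL_K(E/H)v$ for $v=r(\mu)$, and since $\mu^\ast x\nu$ is homogeneous it must be a nonzero scalar multiple of the vertex $v$ or of a power $c^{\pm t}$ of a closed simple path based at $v$; in each case, using $p^\ast p=r(p)$ for a path $p$, a further left multiplication by the appropriate element of the form $(\,\cdot\,)^\ast$ recovers $v\in J$, contradicting $J\cap E^0=\emptyset$. Hence $J=0$, so $\bar I=0$ and $I=I(H)$.

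The one ingredient I would cite rather than reprove is precisely this reduction of an arbitrary nonzero homogeneous element to a scalar multiple of a vertex or of a power of a closed simple path; everything else above is bookkeeping with the grading, the span description of $I(H)$, and the quotient isomorphism, all of which are available from the results recalled earlier.
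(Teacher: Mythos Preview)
The paper does not prove this statement; it merely cites \cite[Theorem~5.3]{AMP}, placing it immediately after the arbitrary-graph version due to Tomforde. Your first paragraph --- observing that for row-finite $E$ one has $B_H=\emptyset$ for every $H\in\mathscr H_E$, so $\mathscr T_E$ collapses to $\mathscr H_E$ and the preceding theorem specializes directly --- is precisely how the paper's arrangement invites the reader to view this result, and it is correct.

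Your direct argument is also sound. The well-definedness checks and the identity $I(H)\cap E^0=H$ via the quotient isomorphism $L_K(E)/I(H)\cong L_K(E/H)$ are fine. The substantive step, that a graded ideal $J$ of $L_K(E/H)$ with $J\cap (E/H)^0=\emptyset$ must vanish, goes through as you describe: picking a nonzero homogeneous element and applying the reduction theorem, homogeneity forces the cycle-polynomial alternative down to a single monomial $kc^{\pm t}$, from which multiplying by $(c^\ast)^t$ or $c^t$ produces a vertex in $J$, a contradiction. Since the paper offers no proof of its own here, there is nothing to compare against, but also nothing to object to in your argument.
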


Let $E$ be an arbitrary graph and $K$ be any field. Then every graded ideal of $L_K(E)$ is $K$-algebra isomorphic to a Leavitt path algebra Part (i) of the following theorem first appears in \cite[Lemma 5.2]{APS1} under the hypothesis that graph $E_H$ satisfies Condition (L).
\begin{theorem}
	Let $E$ be an arbitrary graph and $K$ be any field. Let $H$ be a non-empty hereditary subset of $E$ and $S \subseteq B_H$. Then 
	(i) $I(H)$ is $K$-algebra isomorphic to $L_K(_H E)$; \\
	(ii) $I(H, S)$ is isomorphic as $K$-algebras to $L_K(_{(H,S)}E)$.
\end{theorem}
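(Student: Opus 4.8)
The plan is to construct explicit mutually inverse $K$-algebra homomorphisms between $L_K({}_H E)$ and the ideal $I(H) \subseteq L_K(E)$ (and similarly for the admissible-pair version). For part (i), I would begin by specifying where the generators of $L_K({}_H E)$ should go. The vertices and edges of ${}_H E$ that already come from $E$ (namely $v \in H$ and $e \in E^1$ with $s(e) \in H$) are mapped to themselves; the ``quill'' edges $\overline{\alpha}$ for $\alpha = e_1 \cdots e_n \in F_E(H)$ should be mapped to $\alpha = e_1 \cdots e_n \in L_K(E)$, the new vertex-idempotent $\alpha \in {}_H E^0$ should be mapped to the idempotent $e_1 \cdots e_n (e_1 \cdots e_n)^{\ast} = \alpha \alpha^{\ast} \in L_K(E)$, and $\overline{\alpha}^{\ast}$ to $\alpha^{\ast}$. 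Note $\alpha \alpha^{\ast}$ is genuinely idempotent because $\alpha^{\ast}\alpha = r(\alpha) \in H$ and $\alpha^{\ast}\alpha\,\alpha^{\ast}\alpha = \alpha^{\ast}\alpha$; and the image lands inside $I(H)$ by Lemma~\ref{...}(i) since all these monomials have range in $H$. Then I would verify that this assignment respects the defining relations (1)--(4) of a Leavitt path algebra for the graph ${}_H E$: relations (1),(2),(3) are straightforward monomial bookkeeping using CK-1 in $L_K(E)$, and the one subtle point is CK-2. A vertex of ${}_H E$ is regular exactly when it is a regular vertex $v \in H$ of $E$ \emph{and} the edges $e$ with $s(e) = v$ all have $r(e) \in H$ (which they automatically do, by hereditariness), together with no quill attached; for such $v$, CK-2 in ${}_H E$ is identical to CK-2 in $E$. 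The new vertices $\alpha \in F_E(H)$ each emit a single quill $\overline{\alpha}$ and are therefore \emph{not} regular (a quill vertex is effectively a ``source-like'' vertex with one outgoing edge but we must check it is treated as not regular, or that the CK-2 relation it would impose, $\alpha = \overline{\alpha}\,\overline{\alpha}^{\,\ast}$, is nonetheless respected: indeed $\overline{\alpha}\,\overline{\alpha}^{\,\ast} \mapsto \alpha \alpha^{\ast}$, which is exactly the image of $\alpha$, so the relation holds regardless). By the universal property of $L_K({}_H E)$ this gives a well-defined $K$-algebra homomorphism $\Phi : L_K({}_H E) \to I(H)$.

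Next I would show $\Phi$ is surjective and injective. Surjectivity follows from the description in Lemma~\ref{...}(i): a spanning monomial of $I(H)$ has the form $\gamma \lambda^{\ast}$ with $r(\gamma) = r(\lambda) \in H$; writing $\gamma = \gamma' \gamma''$ where $\gamma''$ is the terminal segment lying entirely inside $H$ and $\gamma'$ is the initial segment with $\gamma' \in F_E(H)$ (or $\gamma'$ trivial if $s(\gamma) \in H$ already), and similarly for $\lambda$, one checks $\gamma \lambda^{\ast}$ is the $\Phi$-image of the corresponding monomial in $L_K({}_H E)$ built from quills and $H$-edges. For injectivity I would invoke the Graded Uniqueness Theorem: $\Phi$ is $\mathbb{Z}$-graded provided we grade ${}_H E$ compatibly — assign $\deg \overline{\alpha} = l(\alpha)$ and give the new vertex $\alpha$ degree $0$ — and $\Phi$ sends each vertex of ${}_H E$ to a nonzero element (the images $v$ for $v\in H$ and $\alpha\alpha^{\ast}$ for $\alpha \in F_E(H)$ are all nonzero in $L_K(E)$), so a graded homomorphism that is nonzero on all vertices is automatically injective. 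Here one small check is that $\Phi$ is indeed graded: $\overline{\alpha} \mapsto \alpha$ has $\deg \alpha = l(\alpha) = \deg \overline{\alpha}$, and the new vertex $\alpha \mapsto \alpha\alpha^{\ast}$ has degree $0$, consistent with $\deg \alpha = 0$ in ${}_H E$.

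For part (ii) the strategy is the same, handled on top of part (i). The graph ${}_{(H,S)}E$ adds, beyond the ${}_H E$ picture, the breaking vertices $S$ themselves as genuine vertices, the edges out of $S$ landing in $H$, and two families of quills $\overline{F}_1(H,S)$, $\overline{F}_2(H,S)$. The map should send $v \in S$ to the idempotent $v^H \in L_K(E)$ (recall $v^H = v - \sum_{e \in s^{-1}(v)\cap r^{-1}(E^0\setminus H)} ee^{\ast}$, which is homogeneous of degree $0$ and idempotent), send an $F_1$-quill over $\alpha$ to $\alpha$, and send an $F_2$-quill over $\alpha$ (with $r(\alpha) \in S$) to $\alpha v^H$ where $v = r(\alpha)$. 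The images again all lie in $I(H,S)$ by Lemma~\ref{...}(ii). One then re-runs the relation check — the only genuinely new computations are those involving $v^H$: that $v^H$ is idempotent, that $(v^H)(ee^{\ast}) = ee^{\ast}$ for the finitely many ``surviving'' edges $e$ with $s(e)=v$, $r(e)\in H$ (so CK-2 at $v$ in ${}_{(H,S)}E$, which reads $v = \sum_{e \in s'^{-1}(v)} ee^{\ast}$ over just those finitely many edges, maps correctly), and the compatibility of the $F_2$-quill images with CK-1. Surjectivity uses the $I(H,S)$ description, which has the extra spanning monomials $\alpha v^H \beta^{\ast}$, and injectivity again uses the Graded Uniqueness Theorem with the analogous grading on ${}_{(H,S)}E$.

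I expect the main obstacle to be the careful verification of CK-2 at the breaking vertices $v \in S$ and the bookkeeping that distinguishes which vertices of the (generalized) hedgehog graph are regular. One must be sure that in ${}_{(H,S)}E$ the vertex $v \in S$ really does emit exactly the finitely many edges $e$ with $s(e) = v, r(e) \in H$ (so it is a regular vertex of the new graph) and that under $\Phi$ its CK-2 relation becomes precisely the identity $v^H = \sum_{e\in s^{-1}(v)\cap r^{-1}(H)} ee^{\ast}$, which holds in $L_K(E)$ essentially by the \emph{definition} of $v^H$ once one uses CK-2 for $v$ in $E$ — but $v$ is \emph{not} regular in $E$, so in fact this identity must be checked directly from the definition of $v^H$ rather than derived from CK-2 in $E$; sorting out this asymmetry cleanly is the delicate point. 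A secondary (routine but lengthy) obstacle is the segmentation argument for surjectivity: decomposing an arbitrary monomial $\gamma\lambda^{\ast}$ of $I(H,S)$ at the last edge whose source leaves $H \cup S$, and matching it with a monomial in the hedgehog algebra, requires a small case analysis according to whether the decomposition point is an $F_1$-type or $F_2$-type path. Neither obstacle is conceptually deep, but both demand precision about the graph constructions in Definition~\ref{graph} and its generalized version.
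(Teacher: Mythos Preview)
The paper is a survey and states this theorem without proof, citing \cite{APS1} and the book \cite{AAS}; there is no proof in the paper to compare against. Your outline is essentially the standard argument found in those references: define $\Phi$ on generators, verify the Leavitt relations, get surjectivity from the explicit spanning set of $I(H)$ (resp.\ $I(H,S)$), and get injectivity from the Graded Uniqueness Theorem after re-grading $L_K({}_H E)$ so that $\deg(\overline{\alpha}) = l(\alpha)$.

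Two corrections to your analysis of the ``obstacles'':

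\textbf{Quill vertices are regular.} Each $\alpha \in F_E(H)$ emits exactly one edge $\overline{\alpha}$ in ${}_H E$, so it \emph{is} a regular vertex, and CK-2 at $\alpha$ is a genuine relation. You do check it ($\alpha\alpha^{\ast} = \alpha\alpha^{\ast}$), so no harm done, but the hedging about whether it is ``treated as not regular'' is unnecessary.

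\textbf{Breaking vertices are not regular in ${}_{(H,S)}E$.} Your claimed ``main obstacle'' rests on a mistake: for $v \in S \subseteq B_H$, the definition of $B_H$ says that only \emph{finitely} many edges out of $v$ land \emph{outside} $H$, hence \emph{infinitely} many land \emph{inside} $H$. In ${}_{(H,S)}E$ the vertex $v$ emits precisely those infinitely many edges, so $v$ remains an infinite emitter and CK-2 does \emph{not} apply at $v$. Consequently there is no relation ``$v^H = \sum_{e \in s^{-1}(v)\cap r^{-1}(H)} ee^{\ast}$'' to verify (indeed that identity is false in $L_K(E)$, since $v$ is not regular there either and the right-hand side is an infinite sum). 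What you actually need to check at $v$ is only that $v^{H}$ is an idempotent orthogonal to the other vertex-images, and that $v^{H}e = e$ and $e^{\ast}v^{H} = e^{\ast}$ for each $e$ with $s(e)=v$, $r(e)\in H$; these follow directly from CK-1 and the definition of $v^{H}$. So the delicate point you anticipated dissolves, and the verification is in fact easier than you feared.

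With those adjustments your plan is correct and matches the literature's proof.
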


\subsection{Structure of Two-Sided Ideals}
The generators of an ideal are studied in \cite{Ranga2} and gives a useful characterization of the graded and non-graded part of an ideal. 
The following results are due to Rangaswamy and finally achieving that in a Leavitt path algebra, any finitely generated ideal is principal \cite{Ranga2}. 
\begin{theorem}
	Let $E$ be an arbitrary graph and $K$ be any field. Then any non-zero ideal of the $L_K(E)$ is generated by elements of the form
	$$\bigg( u+ \displaystyle\sum_{i=1}^{k}k_ig^{ri}\bigg)\Big(u- \displaystyle\sum_{e\in X}ee^{\ast} \Big)$$
	where $ u \in E^0$, $k_i \in K$,$r_i$ are positive integers, $X$ is a finite (possibly empty) proper subset of $s^{-1}(u)$ and, 
	whenever $k_i \neq 0$ for some $i$, then $g$ is a unique cycle based at $u$. 
\end{theorem}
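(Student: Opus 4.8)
The plan is to strip off the largest graded ideal contained in $I$ — for which the Structure Theorem for graded ideals already supplies generators of the required shape — and then to analyze what is left in a suitable quotient graph, where it becomes an ideal meeting no vertex and is therefore forced to be supported on cycles without exits. So set $H:=I\cap E^{0}$, which is hereditary and saturated as recalled above, and $S:=\{v\in B_{H}\mid v^{H}\in I\}$. Then $I(H,S)=\langle H\cup S^{H}\rangle\subseteq I$ is a graded ideal, and each of its natural generators already has the advertised form: a vertex $u\in H$ is obtained by taking all $k_{i}=0$ and $X=\emptyset$, while for $v\in S$ the element $v^{H}=v-\sum_{e\in s^{-1}(v)\cap r^{-1}(E^{0}\setminus H)}ee^{\ast}$ is obtained by taking all $k_{i}=0$ and $X=s^{-1}(v)\cap r^{-1}(E^{0}\setminus H)$, a finite, nonempty, proper subset of $s^{-1}(v)$ since $v$ is an infinite emitter. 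So it remains to produce the generators which, together with $H\cup S^{H}$, generate all of $I$.

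By the quotient theorem, $L_{K}(E)/I(H,S)\cong L_{K}(\bar E)$ with $\bar E:=E/(H,S)$; let $\bar I$ be the image of $I$. The choice of $H$ and $S$ forces $\bar I\cap\bar E^{0}=\emptyset$, since a surviving vertex lying in $\bar I$ would pull back to a vertex of $E^{0}\setminus H$, or to an element $v^{H}$ with $v\in B_{H}\setminus S$, lying in $I$ and contradicting the definition of $H$ or of $S$. So the task reduces to finding a generating set of the required form for an ideal $\bar I$ of $L_{K}(\bar E)$ with $\bar I\cap\bar E^{0}=\emptyset$ and then transporting it back through the isomorphism: a vertex of $\bar E$ lifts either to a vertex $u$ of $E$ or to an element $u-\sum_{e\in X}ee^{\ast}$ that is the image of a breaking-vertex element, and a cycle $\bar g$ of $\bar E$ lifts to a cycle $g$ of $E$, so a cycle generator of $L_{K}(\bar E)$ lifts to an element of $L_{K}(E)$ of the stated form $\bigl(u+\sum_{i}k_{i}g^{r_{i}}\bigr)\bigl(u-\sum_{e\in X}ee^{\ast}\bigr)$, the second factor being the breaking-vertex correction and trivial when no breaking vertex intervenes.

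To analyze $\bar I$ I would use the Reduction Theorem (see \cite{AAS}): for nonzero $\bar a\in\bar I$ there exist paths $\mu,\nu$ with $0\neq\mu^{\ast}\bar a\nu$, and this element is either a nonzero scalar multiple of a vertex — impossible here, as $\bar I\cap\bar E^{0}=\emptyset$ — or lies in $\bar u\,L_{K}(\bar E)\,\bar u$ for some cycle $\bar c$ without exit based at $\bar u$. Since $\bar c$ has no exit, the Cuntz--Krieger relations collapse $\bar u\,L_{K}(\bar E)\,\bar u$ onto $K[\bar c,\bar c^{\ast}]\cong K[x,x^{-1}]$; multiplying by a suitable power of $\bar c$ and rescaling turns $\mu^{\ast}\bar a\nu$ into an element $\bar u\bigl(1+\sum_{i=1}^{n}k_{i}\bar c^{\,i}\bigr)\in\bar I$, which is not a monomial, for a monomial would be invertible in this corner and force $\bar u\in\bar I$. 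Hence, for each cycle $\bar c$ without exit based at a vertex $\bar u$ with $\bar I\cap\bar u\,L_{K}(\bar E)\,\bar u\neq 0$, that intersection is a nonzero proper ideal of $K[x,x^{-1}]\cong\bar u\,L_{K}(\bar E)\,\bar u$, hence principal, generated by some $\bar u f_{\bar u}(\bar c)$ which we normalize to have constant term $1$. Let $G$ be the set of all such elements and $J':=\langle G\rangle$; then $G\subseteq\bar I$, so $J'\subseteq\bar I$.

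The remaining — and, I expect, the main — obstacle is the reverse inclusion $\bar I\subseteq J'$, which asserts that an ideal meeting no vertex is determined locally by its intersections with the corners at vertices lying on cycles without exits; concretely, writing $\bar a=\sum_{v,w}v\bar a w$ via a set of local units $f=\sum_{v}v$ with $f\bar a f=\bar a$, one wants each summand $v\bar a w$ to be a finite sum of elements $p\cdot\bigl(\bar u f_{\bar u}(\bar c)\bigr)\cdot q^{\ast}$ with $p,q$ paths and $\bar u f_{\bar u}(\bar c)\in G$. The route I would take is to push $v\bar a w$ toward the exit-free cycles by repeatedly applying the (CK-2) relation at regular vertices, using that $\bar I$ meets no vertex so that any portion of the graph never running into an exit-free cycle contributes nothing; this reduces each summand to one supported on a corner $\bar u\,L_{K}(\bar E)\,\bar u$ with $\bar u$ on an exit-free cycle, where it lies in the principal ideal generated by $\bar u f_{\bar u}(\bar c)$ and so has the desired form. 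Making this descent precise — in particular keeping track of the breaking vertices, which is where the factor $u-\sum_{e\in X}ee^{\ast}$ genuinely enters — is the technical crux; alternatively one may invoke the complete classification of the (not necessarily graded) ideals of $L_{K}(E)$ from \cite{AAS} and read off the generators directly. Unwinding through $L_{K}(E)/I(H,S)\cong L_{K}(\bar E)$ then yields a generating set of $I$ consisting of $H\cup S^{H}$ together with lifts of $G$, all of the stated form, with $g$ — the lift of $\bar c$ — the unique cycle based at $u$ whenever some $k_{i}\neq 0$, because $\bar c$ is the only closed path through $\bar u$ in $\bar E$ while the remaining edges emitted by $u$ in $E$ range into $H$ and so cannot close up a cycle through $u$.
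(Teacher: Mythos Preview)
The paper contains no proof of this theorem: it is a survey, and the result is simply quoted from Rangaswamy~\cite{Ranga2}. Your outline nonetheless tracks Rangaswamy's own strategy closely --- split off the largest graded subideal $I(H,S)$, whose generators $H\cup S^{H}$ already have the required shape; pass to $L_{K}(\bar E)$ with $\bar E=E/(H,S)$, where the image $\bar I$ meets no vertex; use the Reduction Theorem to produce, for each exit-free cycle $\bar c$ touched by $\bar I$, a polynomial generator $\bar u\,f_{\bar u}(\bar c)$; and lift back. Your discussion of the lifting and of why the resulting $g$ is the unique cycle through $u$ in $E$ is correct. One small correction: the cycle generators always lift with $X=\emptyset$, since the new vertices $v'$ in $\bar E$ (for $v\in B_{H}\setminus S$) are sinks and hence never lie on a cycle, so no ``breaking-vertex correction'' appears on a lifted cycle generator; the nontrivial second factor $u-\sum_{e\in X}ee^{\ast}$ arises only from the $v^{H}$ generators of the graded part.

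You are also right that the inclusion $\bar I\subseteq J'$ is the substantive step, and your sketch is genuinely incomplete there. The iterated-CK-2 picture is suggestive but is hard to make rigorous at infinite emitters and along infinite paths that never reach an exit-free cycle; nor does passing to the quotient by the saturated closure of the exit-free-cycle vertices immediately yield a graph with Condition~(L), so one cannot simply appeal to ``nonzero ideals contain a vertex'' in that quotient. The argument in \cite{Ranga2} (reproduced in \cite[Theorem~2.8.10]{AAS}) instead exploits that $\bar I$ has trivial graded part --- equivalent to $\bar I\cap\bar E^{0}=\emptyset$, since $B_{\emptyset}=\emptyset$ --- and analyzes such ideals directly via the matricial description of the ideal generated by an exit-free cycle, showing that $\bar I$ decomposes as the sum of the two-sided ideals generated by its intersections with the corners $\bar u\,L_{K}(\bar E)\,\bar u\cong K[x,x^{-1}]$. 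That analysis is the real content of the theorem and is not something a soft CK-2 induction will replace.
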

The main result of \cite{Ranga2} is the following theorem: 
\begin{theorem}
		Let $I$ be an arbitrary nonzero ideal of $L_K(E)$ with $I \cap E^0=H$ and $S=\{v \in B_H : v^H \in I \}$. Then $I$ is generated by $H \cup \{v^H : v \in S \} \cup Y$ where $Y$ is a set of mutually orthogonal elements of the form $(u+\sum_{i=1}^{n}k_ig^{r_i})$ in which the following statements hold:
		\begin{enumerate}
			\item [(i)] $g$ is a (unique) cycle with no exits in $E^0 \backslash H$ based at a vertex $u$ in $E^0 \backslash H$; and 
			\item [(ii)] $k_i \in K$ with at least one $k_i \neq 0$.
		\end{enumerate}
	If $I$ is nongraded, then $Y$ is nonempty.
\end{theorem}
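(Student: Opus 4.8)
The plan is to strip off the graded part of $I$ and reduce to an ideal containing no vertices. Since $H=I\cap E^{0}$ and $S=\{v\in B_{H}:v^{H}\in I\}$, the graded ideal $I(H,S)=\langle H\cup S^{H}\rangle$ is contained in $I$, and by the theorem identifying quotient Leavitt path algebras with Leavitt path algebras of quotient graphs we have $L_{K}(E)/I(H,S)\cong L_{K}(F)$ with $F=E/(H,S)$. Write $\bar I$ for the image of $I$; it is an ideal of $L_{K}(F)$, and $\bar I\cap F^{0}=\emptyset$: a vertex $w\in E^{0}\setminus H$ with $\bar w\in\bar I$ would force $w\in I\cap E^{0}=H$, while the vertex $v'$ (for $v\in B_{H}\setminus S$) corresponds under the identification to the class of $v^{H}$, which lies in $\bar I$ only if $v^{H}\in I$, i.e.\ $v\in S$ — both impossible. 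Because $\ker=I(H,S)\subseteq I$, the preimage of $\bar I$ is exactly $I$, so it suffices to produce a generating set of $\bar I$ of the asserted shape and lift it through the identification, adjoining the generators $H\cup S^{H}$ of $I(H,S)$.

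Next I would apply the preceding theorem (Rangaswamy's description of ideal generators) to the arbitrary graph $F$: $\bar I$ is generated by elements $\bigl(u+\sum_{i=1}^{k}k_{i}c^{r_{i}}\bigr)\bigl(u-\sum_{e\in X}ee^{*}\bigr)$ with $X$ a finite proper subset of $s^{-1}(u)$, and with $c$ a cycle based at $u$ whenever some $k_{i}\neq0$. Using $\bar I\cap F^{0}=\emptyset$ I eliminate the ``purely vertex'' generators: if every $k_{i}=0$ the generator is $u-\sum_{e\in X}ee^{*}$, and for any $f\in s^{-1}(u)\setminus X$ (nonempty since $X$ is proper) one computes $f^{*}\bigl(u-\sum_{e\in X}ee^{*}\bigr)f=r(f)\in\bar I$, a contradiction. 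So each remaining generator carries a genuine cycle $c$ based at $u$ with some $k_{i}\neq0$; here I invoke the key technical fact from \cite{Ranga2} (see also \cite{AAS}) that if such a $c$ had an exit in $F$, the ideal it generates would again contain a vertex — impossible. Hence $c$ has no exit, so $u$ emits a single edge, $X=\emptyset$, and the generators reduce to $u+\sum k_{i}c^{r_{i}}$ with $c$ an exit-free cycle based at $u$ and some $k_{i}\neq0$. Since primed vertices are sinks in $F$, the cycle $c$ uses only edges of $E$, so $u\in E^{0}\setminus H$ and $c$ lifts to a cycle $g$ in $E$ (the unique cycle at $u$, as $u$ emits a single edge); all its vertices lie outside $H$ (because $H$ is hereditary and $g$ is closed), $g$ has no exit in $E^{0}\setminus H$, and $u+\sum k_{i}c^{r_{i}}$ lifts to $u+\sum k_{i}g^{r_{i}}\in L_{K}(E)$.

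It remains to organize the lifted generators into a mutually orthogonal family $Y$. Two distinct exit-free cycles in $F$ have disjoint edge sets — a shared vertex would, by the no-exit property, force them to agree as cyclic words — hence vertex-disjoint supports; for such cycles based at $u\neq u'$ one checks directly that $(u+\sum k_{i}c^{r_{i}})(u'+\sum k'_{j}(c')^{s_{j}})$ and its mirror vanish, using $uu'=0$ and that a closed path at $u$ is killed on one side by $u'$. To get one generator per cycle, fix an exit-free cycle $c$ based at $u$ and note $u\,L_{K}(F)\,u\cong K[x,x^{-1}]$; the two-sided ideal $\langle u\rangle$ of $L_{K}(F)$ is Morita equivalent to this corner, so the lattice of ideals of $L_{K}(F)$ contained in $\langle u\rangle$ is isomorphic, via $N\mapsto N\cap u\,L_{K}(F)\,u$, to the ideal lattice of the PID $K[x,x^{-1}]$. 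The ideal generated by those generators of $\bar I$ supported on $c$ then corresponds to a principal ideal $(p)$, proper because $u\notin\bar I$; rescaling $p$ by a unit $kx^{n}$ of $K[x,x^{-1}]$ makes it $1+\sum k_{i}x^{r_{i}}$ with some $k_{i}\neq0$, whose preimage is a single generator $u+\sum k_{i}g^{r_{i}}$. Letting $Y$ be this collection (one generator for each exit-free cycle occurring in $\bar I$), $I$ is generated by $H\cup\{v^{H}:v\in S\}\cup Y$ with $Y$ mutually orthogonal of the stated form; and if $Y=\emptyset$ then $\bar I=0$, so $I=I(H,S)$ is graded — equivalently, if $I$ is nongraded then $Y\neq\emptyset$.

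I expect the main obstacle to be the key lemma invoked in the second paragraph: that a generator $u+\sum k_{i}c^{r_{i}}$ whose cycle $c$ admits an exit drags a vertex of $F$ into the ideal it generates. This is the technical heart of \cite{Ranga2} and needs delicate manipulation of the paths running along $c$, the exit edge, and the CK-relations. A secondary technical point is the corner/Morita bookkeeping of the third paragraph — checking that the ideal of $L_{K}(F)$ generated by several elements supported on a fixed exit-free cycle is recovered from its intersection with $u\,L_{K}(F)\,u$ — which rests on the standard identification of $\langle u\rangle$ with a matrix algebra over $K[x,x^{-1}]$.
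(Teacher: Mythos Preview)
The paper you are working from is a survey and does not supply its own proof of this theorem; it merely quotes the statement as the main result of \cite{Ranga2}. So there is nothing in the present paper to compare against, and the relevant benchmark is Rangaswamy's original argument.

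Your outline is essentially that argument. Passing to $F=E/(H,S)$ so that the image ideal $\bar I$ meets no vertices, applying the ``raw'' generator theorem (the one stated just before) inside $L_K(F)$, using $\bar I\cap F^{0}=\emptyset$ to kill the purely idempotent generators and to force the surviving cycles to be exit--free, and then collapsing the generators supported on a fixed exit--free cycle into a single polynomial via the identification $u\,L_K(F)\,u\cong K[x,x^{-1}]$ --- this is exactly the route taken in \cite{Ranga2}. Your lifting of the resulting cycle $c$ back to a cycle $g$ in $E$ with no exits in $E^{0}\setminus H$, and the disjointness argument giving mutual orthogonality of $Y$, are also the standard steps there. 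The two places you flag as delicate (the ``exit forces a vertex'' lemma, and the ideal correspondence between $\langle u\rangle$ and $K[x,x^{-1}]$) are precisely the technical lemmas Rangaswamy isolates; the second is usually handled by the known isomorphism $I(H_c)\cong M_{\Lambda}(K[x,x^{-1}])$ for the hereditary set $H_c$ of vertices on an exit--free cycle, rather than by abstract Morita theory, which makes the bookkeeping cleaner.

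Two small remarks. First, you should treat the case $\bar I=0$ (i.e.\ $I=I(H,S)$ already) before invoking the preceding theorem, since that theorem is stated for nonzero ideals. Second, in disposing of the factor $u-\sum_{e\in X}ee^{*}$ once $c$ is exit--free, note that you implicitly use that \emph{every} vertex on $c$ emits a single edge in $F$, not just $u$; this is what makes $cc^{*}=u$ and hence $u\,L_K(F)\,u\cong K[x,x^{-1}]$. Both points are easy to patch and do not affect the correctness of your strategy.
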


\begin{corollary}
		Every finitely generated ideal of $L_K(E)$ is a principal ideal. Moreover, if $E$ is a finite graph, then every ideal is principal.
\end{corollary}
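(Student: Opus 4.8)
The plan is to derive both assertions from the structure theorem for two-sided ideals stated just above. Recall that it presents an arbitrary nonzero ideal $I$ as $I = \langle H \cup \{v^H : v \in S\} \cup Y \rangle$, where $H = I \cap E^0 \in \mathscr{H}_E$, $S = \{v \in B_H : v^H \in I\}$, and $Y$ is a family of mutually orthogonal elements $y = u + \sum_{i=1}^{t} \lambda_i g^{r_i}$, each $u = s(g) \in E^0 \setminus H$ being the base of a (unique) exit-free cycle $g$ in $E^0 \setminus H$. I will (a) reduce to the situation in which the triple $(H,S,Y)$ is ``finite'', and then (b) fuse finitely many generators of the above three shapes into a single element. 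Step (b) is the technical heart.

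For the finite-graph assertion, (a) is immediate: if $E^0$ is finite then $\mathscr{H}_E$ is finite, every $B_H$ is finite, and $E$ has only finitely many cycles, so for \emph{any} ideal $I$ the data $H$, $S$, $Y$ are finite; hence every ideal is finitely generated and it suffices to prove the first assertion. For a general finitely generated $I = \langle x_1, \dots, x_n \rangle$ I would argue finiteness of the data as follows. Writing each $x_j = f x_j f$ for a single idempotent $f = \sum_{w \in W} w$ with $W \subseteq E^0$ finite, one shows that $H$ is the hereditary and saturated closure $\overline{H_0}$ of a finite set $H_0$ of vertices (so $I(H) = I(H_0)$ is already generated by the finitely many distinct vertices in $H_0$), and that $S$ and $Y$ are finite. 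The latter two I would obtain by passing to $L_K(E)/I(H,S) \cong L_K(E/(H,S))$, in which the image of $I$ decomposes as a direct sum of nonzero ideals indexed by $Y$ (the cycles $g$ being exit-free and pairwise distinct), together with an analogous direct-sum description — via $L_K(E)/I(H)$ — controlling $S$; since a direct sum of infinitely many nonzero ideals cannot be finitely generated, $Y$ and $S$ must be finite. Consequently $I$ is generated by a finite subset $G \subseteq H \cup \{v^H:v\in S\}\cup Y$ consisting of distinct vertices $w_1,\dots,w_p$, idempotents $v_1^{H},\dots,v_q^{H}$, and elements $y_1,\dots,y_m\in Y$.

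Now the amalgamation. Distinct vertices are orthogonal idempotents; a short CK-relation computation (using $v_\ell \in B_H \subseteq E^0 \setminus H$ and that the edges deleted in $v_\ell^H$ have range outside $H$) shows each $v_\ell^H$ is orthogonal to every $w_j$ and to every $v_{\ell'}^H$ with $\ell'\neq\ell$; and since each cycle base $u_k$ of $y_k$ emits exactly one $(E^0\setminus H)$-edge, namely that of its exit-free cycle, each $y_k$ is orthogonal to every $w_j$, while orthogonality of $y_k$ to the $v_\ell^H$ holds except in the degenerate case $u_k \in S$, which is removed by a preliminary ``corner'' adjustment (replacing such a $y_k$ by $y_k - v_k^H y_k - y_k v_k^H + v_k^H y_k v_k^H$) so that the whole finite family becomes pairwise orthogonal without changing the generated ideal. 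Put $c$ equal to the sum of all the (adjusted) elements of $G$. Then $c \in I$, so $\langle c \rangle \subseteq I$; conversely $w_j c = w_j^2 = w_j$ and $v_\ell^H c = (v_\ell^H)^2 = v_\ell^H$ by orthogonality, whence $\sum_k y_k = c - \sum_j w_j - \sum_\ell v_\ell^H \in \langle c \rangle$, and then $u_k\bigl(\sum_\ell y_\ell\bigr) = u_k y_k = y_k \in \langle c \rangle$ since $u_k y_k = y_k$ while $u_k y_\ell = 0$ for $\ell \neq k$ (the base vertices being pairwise distinct). Thus $\langle c \rangle$ contains all of $G$, so $\langle c \rangle = \langle G \rangle = I$ and $I$ is principal; combined with the first paragraph this also yields the finite-graph statement.

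The step I expect to be the main obstacle is the reduction (a) for a general finitely generated ideal — in particular verifying that the graded part $I(H)$ is the ideal of a \emph{finite} hereditary-saturated set and identifying exactly which vertices the saturation process contributes, which needs a careful analysis of how vertices enter $I \cap E^0$ from the given generators; the direct-sum decompositions of the two quotients, and the degenerate-case orthogonality adjustment, are the other delicate points. Everything after the generating data has been made finite and pairwise orthogonal is the routine computation sketched above.
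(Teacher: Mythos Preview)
The paper is a survey and does not supply a proof of this corollary at all; it simply records it as a consequence of Rangaswamy's results in \cite{Ranga2}. So there is no ``paper's proof'' to compare against, and your proposal has to be judged on its own.

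Your amalgamation step (b) is essentially sound: once you have finitely many generators of the three shapes and they are pairwise orthogonal, summing them and peeling the summands back off by multiplying with the appropriate vertex or idempotent works exactly as you describe (including the corner adjustment in the degenerate case $u_k\in S$). The second assertion for finite $E$ then follows as you say.

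The genuine gap is in step (a). Your claim that $H=I\cap E^0$ must be the hereditary--saturated closure of a \emph{finite} set of vertices whenever $I$ is finitely generated is false. Take $E$ with one infinite emitter $u$ and sinks $v_1,v_2,\dots$, with edges $e_i:u\to v_i$. The element $u-e_1e_1^{*}$ generates a graded ideal $I$ with $I\cap E^0=\{v_2,v_3,\dots\}$ (one checks $e_i^{*}(u-e_1e_1^{*})e_i=v_i$ for $i\ge 2$, while $v_1,u\notin I$), and this $H$ is not the saturated closure of any finite set of vertices since each $v_i$ is a sink and $u$ is not regular. So your proposed reduction, which tries to replace $H$ by a finite $H_0$ with $\overline{H_0}=H$, cannot work as stated. (In this example $I$ is nevertheless principal---indeed $I=\langle u^{H}\rangle$---so the corollary is not in danger; only your route to it is.)

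A cleaner reduction, and the one implicit in the paper's sequencing of the two \cite{Ranga2} theorems, bypasses the triple $(H,S,Y)$ entirely: the \emph{first} structure theorem you quoted says every ideal is generated by elements of the special form $\bigl(u+\sum_i k_i g^{r_i}\bigr)\bigl(u-\sum_{e\in X}ee^{*}\bigr)$. If $I=\langle x_1,\dots,x_n\rangle$, each $x_j$ lies in the ideal generated by finitely many such special elements, so $I$ is generated by finitely many of them; now run your orthogonalisation/amalgamation argument on \emph{those}. This avoids having to control the size of $H$ altogether. Your direct-sum finiteness arguments for $S$ and $Y$ via the quotient are then unnecessary as well, though the idea behind them (that an infinite direct sum of nonzero ideals is not finitely generated) is correct.
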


\subsection{Prime and Primitive Ideals}
The structure of prime ideals has played a key role in ring theory. In the Leavitt path algebra setting the first paper to focus on the prime and primitive ideals of  Leavitt path algebras on row-finite graphs has been  \cite{APS}.  Later the prime ideal structure on an arbitrary graph was studied in \cite{Ranga1}, while the primitive Leavitt path algebras are described in \cite{ABR}. The primitive algebras have also been important as a consequence of Kaplansky's question:  "Is a regular prime ring necessarily primitive?" 

We recall a few ring-theoretic definitions. A two-sided ideal $P$ of a ring $R$ is \textit{prime} in case $P\neq R$ and $P$ has the property that for any two-sided ideals $I,J$ of  $R$, if $IJ \subseteq P$ then either $I \subseteq P$ of $J \subseteq P$. The ring $R$ is called \textit{prime} in case $\{0\}$ is a prime ideal of $R$. It is easily shown that $P$ is a prime idal of $R$ if and only if $R/P$ is a prime ring. The set of all prime ideals of $R$ is denoted by Spec$(R)$, call the prime spectrum of $R$. A ring $R$ is called \textit{left primitive} if $R$ admits a simple faithful left $R$-module. It is easy to show that any primitive ring is prime.

A ring is  \textit{von Neumann regular} (or regular) in case for each $a \in R$ there exists $x \in R$ for which $a = axa$. In the theory of Leavitt path algebras the necessary and sufficient condition for $L_K(E)$ to be regular is given by Abrams and Rangaswamy \cite{AR}.
\begin{theorem}
 Let $E$ be an arbitrary graph and $K$ be any field. $L_K(E)$ is von Neumann regular if and only if $E$ is acyclic.
\end{theorem}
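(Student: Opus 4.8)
The plan is to prove the two implications separately; the forward direction ``$L_K(E)$ von Neumann regular $\Rightarrow E$ acyclic'' carries the content, while the converse rests on a structural fact from the standard theory.

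\textbf{($E$ acyclic $\Rightarrow L_K(E)$ von Neumann regular.)} I would recall from \cite{AAS} that the Leavitt path algebra of an acyclic graph is a direct limit of Leavitt path algebras of \emph{finite} acyclic graphs, and that for a finite acyclic graph $F$ one has $L_K(F)\cong\bigoplus_{w}M_{n_w}(K)$, the sum running over the sinks $w$ of $F$, with $n_w$ the number of paths in $F$ ending at $w$. So $L_K(E)$ is a direct limit of finite-dimensional semisimple algebras. Each $M_n(K)$ is von Neumann regular, hence so is every finite direct sum of such, and von Neumann regularity is inherited by direct limits (an equation $a=aya$ solvable inside a member of the system is solvable in the colimit). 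Therefore $L_K(E)$ is von Neumann regular.

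\textbf{($L_K(E)$ von Neumann regular $\Rightarrow E$ acyclic.)} I would argue by contraposition. Suppose $E$ contains a cycle $c=e_1\cdots e_n$ based at a vertex $v$, so $n\geq 1$. Two elementary facts will be used: (i) for an idempotent $p$ in a von Neumann regular ring $R$, the corner $pRp$ is again von Neumann regular and is unital with identity $p$; and (ii) in a unital von Neumann regular ring every non-zero-divisor is invertible (from $a=axa$ one gets $a(xa-1)=0=(ax-1)a$, and cancelling the non-zero-divisor $a$ yields $xa=1=ax$). Taking $p=v$, it suffices by (i) and (ii) to produce in $vL_K(E)v$ a non-zero-divisor that is not a unit, and I claim $v-c$ is one. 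The $\mathbb{Z}$-grading of $L_K(E)$ restricts to a grading of the subalgebra $vL_K(E)v$, in which $c$ is homogeneous of degree $n\geq 1$ and $c^{*}c=r(c)=v$, and in which every element has finite homogeneous support. If $x\in vL_K(E)v$ and $x(v-c)=0$, then $x=xc$, so if $x\neq 0$ and $d_0$ is its least nonzero degree, then $x_{d_0}=x_{d_0-n}\,c=0$ (as $d_0-n<d_0$), forcing $x=0$; symmetrically $(v-c)x=0$ gives $x=cx$ and $x_{d_0}=c\,x_{d_0-n}=0$, so $x=0$. Hence $v-c$ is a non-zero-divisor. But $v-c$ is not right-invertible: from $(v-c)u=v$, i.e.\ $u=v+cu$, a nonzero $u$ with least nonzero degree $\mu<0$ would satisfy $u_{\mu}=v_{\mu}+c\,u_{\mu-n}=0$, a contradiction, so $u$ is supported in degrees $\geq 0$; letting $M$ be the top degree of $u$, the degree-$(M+n)$ component of $u=v+cu$ reads $0=c\,u_M$ (since $u_{M+n}=0$ as $M+n>M$, and $v_{M+n}=0$ as $M+n>0$), whence $u_M=(c^{*}c)\,u_M=0$, contradicting maximality of $M$. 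Therefore $vL_K(E)v$ has a non-zero-divisor that is not a unit, so by (ii) it is not von Neumann regular, and by (i) neither is $L_K(E)$.

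I expect the crux of the forward direction to be conceptual rather than computational. The tempting first move is to reduce an arbitrary cycle to a cycle \emph{without exit} by passing to a suitable quotient graph $E/(H,S)$ and then use the easy fact that the corner at $v$ is then isomorphic to $K[x,x^{-1}]$; but no such reduction is available in general --- for instance for $R_n$ with $n\geq 2$, where $L_K(R_n)=L_K(1,n)$ is simple and there is no nontrivial quotient graph. The argument above avoids this entirely by working inside the single corner $vL_K(E)v$ and invoking (i) and (ii); once that viewpoint is fixed, what remains is only to keep the homogeneous-degree comparisons straight. The converse direction requires nothing beyond the cited ``locally matricial'' structure of $L_K(E)$ for acyclic $E$.
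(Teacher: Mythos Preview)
The paper is a survey and does not supply its own proof of this theorem; it simply cites the result from \cite{AR} and moves on. There is therefore no in-paper argument to compare your attempt against.

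That said, your proof is correct. The converse direction is the standard locally-matricial argument, and your citation of the structure theorem for $L_K(F)$ with $F$ finite acyclic, together with closure of von Neumann regularity under direct limits, is exactly what is needed. For the forward direction, your corner-plus-grading approach is clean and, as you observe, sidesteps the tempting but unavailable reduction to a cycle without exits (which indeed fails for $R_n$ with $n\geq 2$). The degree bookkeeping checks out: $v-c$ is a two-sided non-zero-divisor in $vL_K(E)v$ by the minimal-degree argument, and your maximal-degree argument (using $c^{*}c=v$ to cancel $c$ on the left) correctly rules out a right inverse. Since in a unital von Neumann regular ring every non-zero-divisor is a unit, the contrapositive is complete. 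One cosmetic point: you might state explicitly that $u\neq 0$ follows from $(v-c)u=v\neq 0$ before invoking its least and greatest nonzero degrees, but this is implicit and does not affect the validity.
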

  
Recall the graph the one vertex, one loop graph $R_1$ of the Example \ref{loop}. The prime ideals of the principal ideal domain $K[x,x^{-1}] \cong L_K(R_1)$ provide a model for the prime spectra of general Leavitt path algebras. The key property of $R_1$ in this setting is that it contains a unique cycle without exits. Specifically, Spec$(K[x,x^{-1}])$ consists of the ideal $\{0\}$, together with ideals generated by the irreducible polynomials of $K[x,x^{-1}]$. The irreducible polynomials are of the form $x^n f(x)$, where $f(x)$ is an irreducible polynomial in the standard polynomial ring $K[x]$, and $n \in \mathbb{Z}$. In particular, there is exactly one graded prime ideal (namely,$\{0\}$) in $L_K(R_1)$. All the remaining prime ideals of $L_K(R_1)$ are non-graded corresponding to irreducible polynomials in $K[x,x^{-1}]$.

The prime ideals of a Leavitt path algebra are completely characterized in the following theorem. Recall that $M(u)$ is defined in Definition 
\ref{M(u)}.
\begin{theorem} \cite[Thm.3.12]{Ranga1} \label{prime}
Let $E$ be an arbitrary graph and $K$ be any field. Let $P$ be an ideal of $L_K(E)$ with $P \cap E^0 =H$. Then $P$ is a prime ideal of $L_K(E)$ if and only if $P$ satisfies one of the following conditions:
\begin{enumerate}
	\item [(i)] $P=\langle H, \{v^H:v \in B_H\}\rangle$ and $E^0 \backslash H$ satisfies the $MT-3$ condition;
	\item [(ii)] $P=\langle H, \{v^H:v \in B_H \backslash \{u\}\}\rangle$ for some $u \in B_H$ and $E^0 \backslash H=M(u)$;
	\item [(iii)] $P=\langle H, \{v^H:v \in B_H\},f(c)\rangle$ where $c$ is a cycle without $K$ in $E$ based at a vertex $u$, $E^0 \backslash H=M(u)$ and $f(x)$ is an irreducible polynomial in $K[x,x^{-1}]$.
\end{enumerate}
\end{theorem}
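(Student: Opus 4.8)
The plan is to characterize primeness of $P$ via the quotient ring $L_K(E)/P$ being prime, and to split the analysis according to whether $P$ is graded or non-graded. First I would use Theorem on graded ideals to note that the largest graded ideal contained in $P$ is $I(H,S)$ where $H = P\cap E^0$ and $S = \{v\in B_H : v^H \in P\}$; since $P$ is prime, one checks that $S$ must be either all of $B_H$ (case (i)) or all but one vertex $u\in B_H$ (case (ii)). Indeed, if two distinct vertices $u_1, u_2 \in B_H\setminus S$ existed, one would produce two ideals whose product lands in $P$ but neither does, contradicting primeness; the same kind of argument forces the corresponding $E^0\setminus H$ to be downward directed ($MT$-$3$) in case (i), and to equal $M(u)$ in case (ii), using that the vertices of a prime quotient cannot be separated.

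Next, for the graded case, I would pass to $L_K(E)/I(H,S) \cong L_K(E/(H,S))$ by the quotient theorem, reducing to the question of when a Leavitt path algebra $L_K(F)$ is prime, i.e. when $\{0\}$ is a prime ideal. Here I would invoke (or reprove along the standard lines) the fact that $L_K(F)$ is prime if and only if $F^0$ is downward directed ($MT$-$3$): the forward direction uses that hereditary saturated "tails" give ideals that must nest, and the backward direction uses local units together with the monomial spanning description of $L_K(F)$ to show any two nonzero ideals meet nontrivially (if $x\gamma\lambda^*$ and $y\mu\nu^*$ are nonzero elements, push down to a common vertex $w$ below $r(\gamma)$ and $r(\mu)$ to manufacture a nonzero product). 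Translating "$F^0 = (E/(H,S))^0$ is downward directed" back to $E$ yields exactly condition (i), and the variant with one breaking vertex $u$ removed yields (ii), since in $E/(H,S)$ that missing breaking vertex becomes a sink and $E^0\setminus H = M(u)$.

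For the non-graded case I would argue that $P$ properly contains its graded core $\mathrm{gr}(P) = I(H,S)$, so the quotient $L_K(E)/I(H,S) \cong L_K(E/(H,S))$ is prime (hence $MT$-$3$ holds) and contains the nonzero proper prime ideal $P/I(H,S)$, which by construction is non-graded and idempotent-free in the relevant sense. Using Rangaswamy's generator theorem stated above, the non-graded generators of $P$ have the form $u + \sum k_i g^{r_i}$ with $g$ a cycle without exits in $E^0\setminus H$ based at $u$; for the quotient to be prime I would show there can be only one such cycle (up to the base vertex), that $E^0\setminus H = M(u)$, and that the polynomial relation cutting out $P$ must be generated by an irreducible $f(x)\in K[x,x^{-1}]$ — here the model computation in $L_K(R_1)\cong K[x,x^{-1}]$, whose prime ideals are exactly $\{0\}$ and those generated by irreducibles, is the guiding picture, transported via the isomorphism of the corner determined by the cycle $c$. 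This gives condition (iii), and conversely each of (i), (ii), (iii) is checked to yield a prime quotient by the same downward-directedness/irreducibility criteria.

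I expect the main obstacle to be the non-graded case: precisely pinning down that a prime $P$ with nontrivial non-graded part forces a \emph{unique} cycle without $K$ with $E^0\setminus H = M(u)$, and identifying the ideal it generates with one coming from an irreducible polynomial. The delicate point is controlling how the CK-2 relations and the cycle's "no $K$" hypothesis make the relevant corner of $L_K(E/(H,S))$ behave like $K[x,x^{-1}]$, so that Spec of that corner transfers back faithfully; handling breaking vertices simultaneously adds bookkeeping but no new idea.
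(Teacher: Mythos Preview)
The paper you are working from is a survey and does not contain a proof of this theorem; it simply quotes the statement from \cite[Thm.~3.12]{Ranga1}. So there is no ``paper's own proof'' to compare against here.

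That said, your outline tracks exactly the machinery the survey assembles around this result: the graded core $\mathrm{gr}(P)=I(H,S)$ from \cite[Lemma 3.6]{Ranga1}, the quotient isomorphism $L_K(E)/I(H,S)\cong L_K(E/(H,S))$, the characterization of prime $L_K(F)$ via $MT$-$3$, and Rangaswamy's generator theorem for the non-graded part. This is the standard route, and it is the one taken in \cite{Ranga1}. Your identification of the delicate step is also accurate: in the non-graded case one must show that primeness forces a \emph{single} cycle $c$ without $K$ in $E^0\setminus H$ with $E^0\setminus H=M(u)$, and that the corner at $u$ is Morita-controlled by $K[x,x^{-1}]$ so that the non-graded generator reduces to a single irreducible $f(c)$. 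One small refinement: in case (ii) the argument that exactly one breaking vertex $u$ can be omitted from $S$ is not just a product-of-two-ideals trick; in \cite{Ranga1} it goes through the structure of the quotient graph $E/(H,S)$, where each omitted breaking vertex produces an extra sink $u'$, and $MT$-$3$ in that quotient forces at most one such sink and moreover $E^0\setminus H=M(u)$. Your sketch gestures at this but would need that translation made precise.
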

Recall that a ring $R$ is prime if $\{0\}$ is a prime ideal, hence the immediate corollary to Theorem \ref{prime} follows.
\begin{corollary}
		Let $E$ be an arbitrary graph and $K$ any field. Then $L_K(E)$ is prime if and only if $E$ is $MT-3$.
\end{corollary}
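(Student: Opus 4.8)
The plan is to deduce this as a direct corollary of Theorem~\ref{prime}, since ``$L_K(E)$ is prime'' is precisely the assertion that $\{0\}$ is a prime ideal. First I would apply Theorem~\ref{prime} to the ideal $P = \{0\}$, so that $H = P \cap E^0 = \emptyset$. With $H = \emptyset$ we have $B_H = B_\emptyset = \emptyset$ (a breaking vertex must have at least one edge ranging outside $H = \emptyset$, but also all-but-finitely-many edges ranging into $\emptyset$, forcing it to be a non-infinite-emitter, a contradiction), so conditions (ii) and (iii) of Theorem~\ref{prime} cannot hold: both require a vertex $u \in B_H = \emptyset$. Hence $\{0\}$ is prime if and only if condition (i) holds, which for $H = \emptyset$ reads: $\{0\} = \langle \emptyset, \emptyset \rangle$ (trivially true) and $E^0 \setminus H = E^0$ satisfies $MT-3$.

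Next I would observe that the $MT$-conditions were defined for full subgraphs, whereas here we need to interpret ``$E^0$ satisfies $MT-3$'' directly. The cleanest route is to note that $MT-3$ for the vertex set $E^0$ (equivalently, for the full subgraph $E$ itself) is exactly the downward-directedness condition stated in the excerpt: for every $v, w \in E^0$ there exists $y \in E^0$ with $v \geq y$ and $w \geq y$. Since conditions $MT-1$ and $MT-2$ are automatic when the subgraph is all of $E$ (there is no vertex outside $E^0$, and if $v$ emits an edge then its range is trivially in $E^0$), saying ``$E$ is $MT-3$'' is the standard shorthand for this downward-directedness, and this matches the terminology fixed in the excerpt right after the maximal-tail definition. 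So the statement reduces to: $\{0\}$ prime $\iff$ $E$ is $MT-3$, which is what we want.

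I would then write up the two directions explicitly for completeness, even though both follow formally. For the forward direction, if $L_K(E)$ is prime then $\{0\}$ is a prime ideal; by the analysis above the only applicable case of Theorem~\ref{prime} is (i), which forces $E^0$ to be $MT-3$. For the reverse direction, if $E$ is $MT-3$ then $H = \emptyset$ together with the (vacuous) data $\{v^H : v \in B_\emptyset\} = \emptyset$ satisfies condition (i) of Theorem~\ref{prime}, so $P = \langle \emptyset \rangle = \{0\}$ is prime, i.e.\ $L_K(E)$ is prime.

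The only real point requiring care --- and the one I would flag as the main (minor) obstacle --- is the bookkeeping around $B_\emptyset = \emptyset$ and the precise meaning of ``$E^0 \setminus H$ satisfies $MT-3$'' when $H$ is empty, so that the reduction to plain downward-directedness of $E$ is airtight; once that is settled the corollary is immediate and needs no further computation.
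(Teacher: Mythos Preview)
Your overall strategy is correct and matches the paper's, which simply records this as an immediate consequence of Theorem~\ref{prime}. However, there is one genuine slip in your case analysis: you claim that condition~(iii) of Theorem~\ref{prime} ``requires a vertex $u \in B_H$,'' but it does not. In~(iii) the vertex $u$ is the base of a cycle $c$ without~$K$; it has nothing to do with breaking vertices. So your stated reason for excluding~(iii) is wrong.

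The fix is easy, and there are two clean routes. First, if $\{0\}$ were of type~(iii) with $H=\emptyset$ and $B_\emptyset=\emptyset$, then $\{0\}=\langle f(c)\rangle$; but $f(c)\neq 0$ in $L_K(E)$ (the cycle $c$ gives an embedding of $K[x,x^{-1}]$), so this is impossible. Second, and perhaps more in the spirit of the corollary, note that in case~(iii) one has $E^0\setminus H = M(u)$; with $H=\emptyset$ this says $E^0=M(u)$, i.e.\ every vertex connects to $u$, which already forces $MT$-$3$. Either observation repairs the forward direction; the rest of your argument (in particular $B_\emptyset=\emptyset$ and the handling of~(i) and~(ii)) is fine.
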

When $E$ is row-finite, the characterization of a primitive $L_K(E)$ is given \cite{APS}.
\begin{theorem}
	%LPA BOOK  %%%%%%%%%%%%%%
	Let $E$ be a row-finite graph and $K$ be any field. Then $L_K(E)$ is primitive if and only if $E$ is $MT-3$ and Condition(L).
	\end{theorem}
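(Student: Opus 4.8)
The plan is to characterize primitivity of $L_K(E)$ (for $E$ row-finite) in terms of the two graph conditions by combining the prime characterization already available (Theorem~\ref{prime} and its corollary) with a structural argument producing a faithful simple module. First I would establish the easy direction: if $L_K(E)$ is primitive, then it is prime, so by the Corollary to Theorem~\ref{prime} the graph $E$ is $MT$-$3$; moreover if $E$ failed Condition~$(L)$ it would contain a cycle $c$ without an exit, and then the ideal structure coming from the prime description (items (ii), (iii) of Theorem~\ref{prime}, together with the model $K[x,x^{-1}]$ discussed for $R_1$) would force the existence of nonzero ideals annihilating any candidate faithful module — concretely, a cycle without exit yields a nonzero ideal isomorphic to a matrix ring over $K[x,x^{-1}]$, which is not primitive, and one argues that $L_K(E)$ inherits a nontrivial two-sided ideal that must be contained in the annihilator of any simple module, contradicting faithfulness. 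So Condition~$(L)$ is necessary.

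For the converse, assume $E$ is row-finite, $MT$-$3$, and satisfies Condition~$(L)$. The key step is to build a simple faithful left $L_K(E)$-module. The natural approach is to use the $MT$-$3$ (downward directedness) condition to select a suitable infinite path or sink: by $MT$-$3$ any two vertices have a common descendant, so one can construct either an infinite path $p = e_1 e_2 \cdots$ or a path terminating at a sink $w$ such that every vertex of $E$ connects into the "tail" of $p$ (or into $w$). From such a path one forms the module $V_{[p]}$ spanned by the tails of $p$ (the Chen module construction), or in the sink case the module $L_K(E)w$; Condition~$(L)$ is exactly what guarantees this module is simple (an exit on every cycle prevents the appearance of proper submodules arising from periodic behavior), and $MT$-$3$ guarantees faithfulness, because the annihilator of $V_{[p]}$ is a graded ideal $I(H,S)$ with $H = \mathrm{Ann} \cap E^0$ hereditary and saturated, and downward directedness together with the fact that $p$ meets (the tree below) every vertex forces $H = \emptyset$ and $S = \emptyset$, hence the annihilator is zero.

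The main obstacle I anticipate is the faithfulness argument, and more specifically the careful case analysis in constructing the path $p$: one must handle separately (a) the case where $E$ has a sink $w$ that every vertex connects to, (b) the case of an infinite emitter-free but sink-free graph where an honest infinite path absorbing all vertices must be assembled by a back-and-forth/diagonal argument using $MT$-$3$ repeatedly, and (c) verifying in each case that no vertex, and no $v^H$ element, lies in the annihilator. Row-finiteness is used here to keep the combinatorics of selecting edges manageable and to ensure $B_H$ behaves well. Showing the module is nonzero and that $1$ (or the local units) act as expected is routine; showing simplicity reduces to a standard lemma that a nonzero submodule of a Chen-type module over a graph with Condition~$(L)$ is everything, which I would cite or reprove via the exit property. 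I would close by remarking that the row-finite hypothesis can in fact be removed using the Countable Separation Property, but that the general statement is beyond the scope needed here.
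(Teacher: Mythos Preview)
The paper does not prove this theorem; it is a survey and simply cites the result from \cite{APS} without argument, so there is no proof in the paper to compare your proposal against.

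On its own merits your outline is largely sound, and the converse via an infinite-path (Chen-type) simple module is the now-standard route. One genuine soft spot is your argument for the necessity of Condition~$(L)$. Saying that a cycle without exit yields an ideal isomorphic to matrices over $K[x,x^{-1}]$, ``which is not primitive,'' and that therefore some nonzero ideal must land in the annihilator of any simple module, does not follow: a ring can be primitive while containing non-primitive ideals, and nothing forces an arbitrary nonzero ideal into the annihilator of a faithful simple module. The clean fix is to pass to a corner. If $u$ lies on a cycle $c$ with no exit then $uL_K(E)u \cong K[x,x^{-1}]$; if $M$ is a faithful simple left $L_K(E)$-module then $uM\neq 0$, and for any $0\neq m\in uM$ one has $(uL_K(E)u)m = uL_K(E)um = uM$, so $uM$ is simple over $uL_K(E)u$, while $a\in uL_K(E)u$ with $a\,uM=0$ gives $aM=auM=0$ and hence $a=0$. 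Thus $K[x,x^{-1}]$ would be (left) primitive, impossible for a commutative ring that is not a field. With this correction your forward direction goes through; the $MT$-$3$ part (primitive $\Rightarrow$ prime $\Rightarrow$ $MT$-$3$ via the Corollary to Theorem~\ref{prime}) is fine as written.
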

When $E$ is an arbitrary graph, the result requires a new condition on the graph \cite{ABR}.
\begin{theorem}
	Let $E$ be any graph and $K$ be any field. Then $L_K(E)$ is primitive if and only if $E$ has $MT-3$, Condition(L) and Countable Separation Property.
\end{theorem}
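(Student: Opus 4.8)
The plan is to prove the biconditional in two directions, combining the arbitrary-graph prime-ideal characterization (Theorem \ref{prime}) with an analysis of when $\{0\}$ admits a faithful simple module, and treating the Countable Separation Property as the bridge between the two. For the forward direction, suppose $L_K(E)$ is primitive. Since every primitive ring is prime, the corollary following Theorem \ref{prime} gives that $E$ satisfies $MT-3$. To obtain Condition $(L)$, I would argue by contradiction: if some cycle $c$ in $E$ has no exit, then (by the description of ideals generated by cycles without $K$, cf. the structure results in Section 2.3 and the local picture of $L_K(R_1)\cong K[x,x^{-1}]$) the vertices on $c$ generate a nonzero ideal isomorphic to a matrix ring over $K[x,x^{-1}]$, which is not primitive, and one shows a faithful simple module over $L_K(E)$ would restrict to a faithful simple module over this ideal — a contradiction. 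Finally, for the Countable Separation Property: a faithful simple module $M$ is cyclic, $M = L_K(E)\cdot m$ for some $m\in M$; writing $m = fm$ for a finite sum $f=\sum_{v\in V(m)}v$ of vertices (using local units), and exploiting that each generator $\gamma\lambda^\ast$ acting nontrivially must ``see'' a vertex, one extracts a countable set $S\subseteq E^0$ (roughly, the vertices occurring in a countable generating set for the annihilator-free action) such that every $u\in E^0$ satisfies $u\geq w$ for some $w\in S$; faithfulness forces this $S$ to be separating.

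For the converse, assume $E$ has $MT-3$, Condition $(L)$ and the Countable Separation Property. By the corollary to Theorem \ref{prime}, $L_K(E)$ is prime. The task is to build a faithful simple left module. I would take a countable separating set $S = \{w_1, w_2, \dots\}$ and use $MT-3$ together with Condition $(L)$ to organize the $w_i$ into a chain: iteratively replace $w_i$ by a common lower bound of $w_i$ and the previously chosen vertex (downward directedness), so as to produce a sequence of vertices $v_1 \geq v_2 \geq \cdots$ that is cofinal for the relation $\geq$ and such that Condition $(L)$ lets us route infinite paths through it. The module is then constructed as a direct limit of the modules $L_K(E)v_n$ (or, equivalently, built from an infinite path / a suitable algebraic branching system à la the constructions used for primitivity of graph algebras): simplicity follows because any nonzero submodule contains an element that, after multiplying by suitable $\gamma^\ast$'s, reaches a vertex $v_n$, hence generates everything; faithfulness follows because the separating property guarantees that for any nonzero $x\in L_K(E)$ there is a vertex in the support-tree lying in the chain on which $x$ acts nontrivially.

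The main obstacle I expect is the converse construction of the faithful simple module — specifically, reconciling the three hypotheses into a single combinatorial object (a cofinal descending chain or an infinite path with the right exit behavior) along which the module is supported. $MT-3$ alone gives pairwise common lower bounds but not a single cofinal chain; the Countable Separation Property is exactly what upgrades finitely-many-at-a-time downward directedness to a countable cofinal system, and Condition $(L)$ is what prevents the support from collapsing into a cycle (where the algebra would look like $K[x,x^{-1}]$, which has no faithful simple module). Getting faithfulness — ensuring \emph{every} nonzero element of $L_K(E)$ is detected — rather than just simplicity, is the delicate point, and is where the countability in the Countable Separation Property is genuinely used, since an uncountable graph could otherwise have ``too many'' elements to annihilate with a single cyclic module. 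The forward direction is comparatively routine, modulo carefully extracting the separating set from a cyclic generator of the faithful simple module.
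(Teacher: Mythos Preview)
The paper is a survey and does not supply its own proof of this theorem; it merely states the result and attributes it to Abrams, Bell and Rangaswamy \cite{ABR}. So there is no in-paper argument to compare your proposal against. That said, your outline is broadly in the spirit of the original \cite{ABR} proof: deduce $MT$-$3$ from primeness, rule out exitless cycles, extract a countable separating set from a cyclic generator of a faithful simple module, and for the converse assemble a cofinal descending system from the countable separating set and build a simple faithful module along it.

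One concrete gap in your forward direction deserves flagging. Your argument for Condition $(L)$ asserts that a faithful simple $L_K(E)$-module ``would restrict to a faithful simple module over this ideal'' (the ideal generated by the vertices of an exitless cycle). Restriction of a simple module to a two-sided ideal need not be simple: an $I$-submodule of $M$ is not automatically an $R$-submodule. The clean route is to pass to the \emph{corner} $vL_K(E)v$ at a vertex $v$ on the exitless cycle; this corner is isomorphic to $K[x,x^{-1}]$, and primitivity of $L_K(E)$ (a ring with local units) forces primitivity of $vL_K(E)v$ by the standard Morita-type argument, contradicting the fact that the commutative non-field $K[x,x^{-1}]$ is not primitive. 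Your sketch for extracting the Countable Separation Property from the cyclic generator is also thinner than it looks: from $uL_K(E)n\neq 0$ you get a monomial $\gamma\lambda^\ast$ with $s(\gamma)=u$, $s(\lambda)$ in the finite support of $n$, and $u\geq r(\gamma)=r(\lambda)$, but the set of such $r(\lambda)$ is a union of trees $T(v)$ and need not be countable; producing an honest countable separating set requires an additional argument (in \cite{ABR} this is handled with some care). Your identification of the converse construction as the delicate step is accurate, and your description of how $MT$-$3$, Condition $(L)$ and countable separation combine to produce a cofinal chain supporting a faithful simple module matches the strategy of \cite{ABR}.
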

We pause here to construct a Leavitt path algebra which is a counter example to Kaplansky's question "Is a regular prime ring necessarily primitive?", (see \cite{ABR} for details).
\begin{example}
	$X$ uncountable, $S$ the set of finite subsets of $X$. Define the graph $E$:
\begin{enumerate}
	\item 	[(1)] vertices indexed by $S$, and
	\item 	[(2)] edges induced by proper subset relationship.
\end{enumerate}
	Then $L_K(E)$ is regular, prime, not primitive.
\end{example}
The following results are from \cite{Ranga1}.
\begin{lemma}(\cite[Lemma 3.8]{Ranga1})
	Let $P$ be a prime ideal of $L_K(E)$ with $H=P \cap E^0$ and let $S=\{v \in B_H:v^H \in P\}$. Then the ideal $I(H,S)$ is also a prime ideal of $L_K(E)$.
\end{lemma}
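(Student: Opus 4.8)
The plan is to deduce the result from the classification of prime ideals in Theorem~\ref{prime}, applied first to $P$ and then, in one case, to the candidate ideal $I(H,S)$ itself. I would begin by recording the routine facts: $I(H,S)\subseteq P$ (immediate, since $H\subseteq P$ and $v^H\in P$ for every $v\in S$), so in particular $I(H,S)\subsetneq L_K(E)$; moreover $I(H,S)$ is a graded ideal, with $I(H,S)\cap E^0=H$ and $\{v\in B_H : v^H\in I(H,S)\}=S$ --- the last equality being exactly the statement that the lattice isomorphism of the Structure Theorem for Graded Ideals carries $I(H,S)$ to the admissible pair $(H,S)$.

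Next I would run the case analysis furnished by applying Theorem~\ref{prime} to $P$ (with $H=P\cap E^0$); one of conditions (i), (ii), (iii) holds. In case (i), $P=\langle H,\{v^H : v\in B_H\}\rangle$, so every $v\in B_H$ has $v^H\in P$, hence $S=B_H$ and $I(H,S)=P$ is prime. In case (ii), $P=\langle H,\{v^H : v\in B_H\setminus\{u\}\}\rangle$ for some $u\in B_H$; here $B_H\setminus\{u\}\subseteq S$, while $u^H\notin P$ by injectivity of the graded-ideal lattice isomorphism, so $S=B_H\setminus\{u\}$ and again $I(H,S)=P$ is prime. The only substantive case is (iii): $P=\langle H,\{v^H : v\in B_H\},f(c)\rangle$ is non-graded, $c$ a cycle without $K$ based at a vertex $u$ with $E^0\setminus H=M(u)$. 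Since all the $v^H$ ($v\in B_H$) appear among the listed generators of $P$, we again get $S=B_H$, so $I(H,S)=\langle H,\{v^H : v\in B_H\}\rangle$, a graded ideal contained in $P$.

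It remains to see that this graded ideal is itself prime, and for that I would feed it back into Theorem~\ref{prime}: $I(H,S)\cap E^0=H$ and $I(H,S)=\langle H,\{v^H : v\in B_H\}\rangle$, so $I(H,S)$ meets the hypotheses of condition~(i) as soon as $E^0\setminus H$ is $MT-3$. But in case (iii) we are handed $E^0\setminus H=M(u)$, which is downward directed: for any $v,w\in M(u)$ one has $v\geq u$ and $w\geq u$, with $u\in M(u)$. Hence $I(H,S)$ is prime by Theorem~\ref{prime}(i). (Equivalently, one may argue through the quotient: $L_K(E)/I(H,S)\cong L_K\big(E/(H,B_H)\big)$ by the quotient-graph theorem, the underlying graph has vertex set $M(u)$, which is $MT-3$, so the quotient is prime by the Corollary to Theorem~\ref{prime}.)

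The main obstacle --- in fact the only point beyond bookkeeping --- is case (iii): one must see that passing from the non-graded prime $P$ to the graded ideal $I(H,S)$ discards precisely the polynomial datum $f(c)$ while leaving the pair $(H,S)$ with $S=B_H$ untouched, and then recognize that the $MT-3$ hypothesis needed to re-certify $I(H,S)$ as prime comes for free from the stronger equality $E^0\setminus H=M(u)$ supplied in that case. Cases (i) and (ii) are immediate, since there $P$ is already graded and coincides with $I(H,S)$.
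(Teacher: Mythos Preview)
The survey does not give its own proof of this lemma; it only records the statement as \cite[Lemma~3.8]{Ranga1}, so there is no in-paper argument against which to compare your proposal.

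Taken on the survey's own terms, your argument is correct and efficient. Theorem~\ref{prime} appears \emph{before} the lemma in the survey's presentation, so invoking it is legitimate here, and your case analysis is complete: in cases~(i) and~(ii) the ideal $P$ is already graded and one has $I(H,S)=P$ outright (your use of the Structure Theorem to see $u^H\notin P$ in case~(ii) is exactly right), while in case~(iii) you correctly read off $S=B_H$ and then observe that $E^0\setminus H=M(u)$ is downward directed, so $I(H,B_H)$ falls under clause~(i) of the same theorem.

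One caveat about logical dependencies. In \cite{Ranga1} the lemma is numbered~3.8 while the classification theorem you invoke is numbered~3.12; in that paper the lemma is established \emph{first} and is an ingredient in the proof of the classification. Your derivation is therefore sound as a consequence of the survey's imported results, but it would be circular as a reconstruction of \cite{Ranga1} itself. The original argument proceeds without the trichotomy, working directly with the quotient $L_K(E)/I(H,S)\cong L_K\bigl(E/(H,S)\bigr)$ and the image of $P$ there; so if you ever need a self-contained proof rather than a deduction from Theorem~\ref{prime}, a different route is required.
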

\begin{corollary}(\cite[Corollary 3.9]{Ranga1})
	Let $E$ be an arbitrary graph and $K$ be any field. Then the Leavitt path algebra $L_K(E)$ is a prime ring if and only if there is a prime ideal of $L_K(E)$ which does not contain any vertices.
\end{corollary}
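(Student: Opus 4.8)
The plan is to prove the two implications separately; the forward direction is essentially immediate, and the reverse direction is a short deduction from the lemma immediately preceding the statement (\cite[Lemma 3.8]{Ranga1}).

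For the forward implication, suppose $L_K(E)$ is a prime ring. By definition this means that $\{0\}$ is a prime ideal of $L_K(E)$. Now recall the standard fact that in any Leavitt path algebra every vertex $v \in E^0$ is a nonzero idempotent (this follows, for instance, from the existence of a faithful graded representation of $L_K(E)$, or from the graded uniqueness theorem). Hence $\{0\} \cap E^0 = \emptyset$, so $\{0\}$ itself is a prime ideal of $L_K(E)$ which contains no vertices, as required.

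For the reverse implication, suppose $P$ is a prime ideal of $L_K(E)$ with $P \cap E^0 = \emptyset$, and set $H = P \cap E^0 = \emptyset$ together with $S = \{v \in B_H : v^H \in P\}$. The first step is to check that $B_\emptyset = \emptyset$: a vertex $v \in B_H$ must be an infinite emitter with $0 < |s^{-1}(v) \cap r^{-1}(E^0 \backslash H)| < \infty$, but for $H = \emptyset$ this set is exactly $s^{-1}(v)$, which is infinite since $v$ is an infinite emitter; so no such $v$ exists. Consequently $S = \emptyset$ as well. Now apply \cite[Lemma 3.8]{Ranga1} to $P$: the ideal $I(H,S)$ is a prime ideal of $L_K(E)$. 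But $I(\emptyset,\emptyset)$ is by definition the ideal generated by $H \cup S^H = \emptyset$, i.e. $I(\emptyset,\emptyset) = \{0\}$. Therefore $\{0\}$ is a prime ideal of $L_K(E)$, which is to say $L_K(E)$ is prime.

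The only points that require any care are the verification that $B_\emptyset = \emptyset$ and the identification $I(\emptyset,\emptyset) = \{0\}$; once these are in place the result follows at once from the preceding lemma, so I do not expect a genuine obstacle here. If one prefers to avoid invoking \cite[Lemma 3.8]{Ranga1}, an alternative route is to feed $P$ into Theorem \ref{prime}: a prime ideal with $P \cap E^0 = \emptyset$ must be of type (i) or (iii) there (type (ii) is vacuous since $B_\emptyset = \emptyset$), and in either case $E^0 \backslash H = E^0$ is $MT$-$3$ — immediately in case (i), and because $E^0 = M(u)$ forces downward directedness in case (iii) — whence $L_K(E)$ is prime by the corollary to Theorem \ref{prime}.
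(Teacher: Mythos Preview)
Your argument is correct, and it is exactly the approach the paper intends: the survey states the result as an immediate corollary of the preceding lemma (\cite[Lemma 3.8]{Ranga1}) without supplying a separate proof, and your deduction via $H=\emptyset$, $B_\emptyset=\emptyset$, hence $I(\emptyset,\emptyset)=\{0\}$, is precisely that intended one-line application. The alternative route you sketch through Theorem~\ref{prime} is also sound but unnecessary here.
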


A natural question that arose is to answer the graded version of Kaplansky's question, namely 
whether every graded prime von Neumann regular Leavitt path algebra is graded primitive. 
This question is solved by the recent unpublished work of Rangaswamy \cite{Ranga3}. 
\begin{theorem}
For any arbitrary graph $E$ given, the following are equivalent 
\begin{enumerate}
	\item [(i)] $L_K(E)$ is graded primitive;
	\item [(ii)] $E^0$ is countably directed;
	\item [(iii)] $L_K(E)$ is graded prime and, for some vertex $v \in E^0$, the tree $T(v)$ satisfies the Countable Separation Property.
\end{enumerate}
\end{theorem}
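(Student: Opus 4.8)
The plan is to prove the two equivalences $(i)\Leftrightarrow(ii)$ and $(ii)\Leftrightarrow(iii)$. Two facts will be used throughout. First, the annihilator of any $\mathbb{Z}$-graded $L_K(E)$-module is a graded ideal, so by the Structure Theorem for Graded Ideals it vanishes as soon as it contains no vertex; hence a graded-simple graded module $M$ is faithful exactly when $vM\ne 0$ for every $v\in E^0$. Second, $L_K(E)$ is graded prime if and only if $E^0$ is downward directed (satisfies $MT$-$3$), and every graded-primitive ring is graded prime (if $I,J$ are graded ideals with $IJ=0$ and $M$ is faithful graded-simple, the graded submodule $JM$ is $0$, forcing $J=0$, or $M$, forcing $IM=(IJ)M=0$ and $I=0$). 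I shall also use the graded analogue of Chen's classification: a graded-simple graded left $L_K(E)$-module is, up to isomorphism and degree shift, one of a standard list of ``Chen-type'' modules --- those $V_{[p]}$ attached to infinite paths $p$, the modules $L_K(E)w$ attached to sinks $w$, the modules attached to infinite emitters, and the graded-simple modules attached to terminal cycles; the feature separating the graded from the ungraded theory is that these are \emph{graded}-simple with no recourse to Condition $(L)$.

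$(ii)\Rightarrow(i)$. Fix a countable set $S=\{w_1,w_2,\dots\}$ witnessing countable directedness. Put $u_1:=w_1$, and recursively choose $u_{k+1}\in S$ to be a common lower bound of $u_k$ and $w_{k+1}$, together with a path $\mu_k\colon u_k\to u_{k+1}$; then $u_1\ge u_2\ge\cdots$, and since every vertex of $E^0$ lies above some member of $S$, every vertex lies above $u_k$ for all large $k$. Out of this cofinal descending chain one builds a graded-simple graded module $M$ ``sitting at the bottom of $E$'': if the chain is eventually a sink $u$, take $M:=L_K(E)u$; if the $\mu_k$ can be chosen nontrivial from some point on, take $M:=V_{[p]}$ for the infinite path $p$ to which they concatenate; in the remaining cyclic case take the graded-simple module attached to the terminal cycle. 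In every case $vM\ne 0$ for all $v\in E^0$, since $v$ reaches $u$, or reaches a vertex lying on $p$; so $M$, having graded annihilator with empty vertex part, is faithful, and $L_K(E)$ is graded primitive.

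$(i)\Rightarrow(ii)$. Let $M$ be a faithful graded-simple graded module; by the classification it is one of the Chen-type modules. If $M\cong V_{[p]}$ with $p=e_1e_2\cdots$, faithfulness means every $v\in E^0$ is the source of an infinite path tail-equivalent to $p$, i.e. $v\ge s(\tau_k p)$ for some $k$, where $\tau_k p=e_{k+1}e_{k+2}\cdots$. The vertices $s(\tau_k p)$ are $\ge$-decreasing along $p$, so for $u_1,u_2\in E^0$ with $u_i\ge s(\tau_{k_i}p)$ the vertex $s(\tau_{\max(k_1,k_2)}p)$ is a common lower bound of $u_1,u_2$ in the countable set $\{\,s(\tau_k p):k\ge 0\,\}$; hence $E^0$ is countably directed. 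The sink case (witness $\{w\}$), the infinite-emitter case, and the cyclic case are handled in the same spirit. For $(ii)\Leftrightarrow(iii)$: if $E^0$ is countably directed with witness $S$, then $E^0$ is downward directed, so $L_K(E)$ is graded prime, and for $v:=w_1\in S$ the countable set $S\cap T(v)$ is a CSP-witness for $T(v)$, since any common lower bound of $x\in T(v)$ and $v$ already lies in $S\cap T(v)$. Conversely, given graded primeness (so $E^0$ downward directed) and a countable CSP-witness $S_0\subseteq T(v)$, for $u_1,u_2\in E^0$ choose a common lower bound $z$, then a common lower bound $z'$ of $z$ and $v$; as $z'\in T(v)$ there is $w\in S_0$ with $z'\ge w$, whence $u_1\ge w$ and $u_2\ge w$, so $S_0$ witnesses countable directedness.

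The main obstacle is the module input. In $(ii)\Rightarrow(i)$ one must carry out the (elementary but fiddly) bookkeeping that turns the cofinal chain into a bona fide module at the bottom of $E$, and --- most importantly --- verify that the relevant Chen-type modules, in particular those attached to cycles and to periodic infinite paths, are genuinely \emph{graded}-simple; this is the precise juncture at which Condition $(L)$ becomes superfluous in the graded setting. Dually, $(i)\Rightarrow(ii)$ leans on the classification of graded-simple graded modules. Finally, the auxiliary equivalence ``$L_K(E)$ graded prime $\Leftrightarrow$ $E^0$ downward directed'', if one wants a self-contained treatment, follows from the lattice isomorphism $\mathscr{L}_{gr}(L_K(E))\cong\mathscr{T}_E$ together with the observation that every vertex in the hereditary saturated closure of a set $X$ reaches a vertex in the hereditary closure of $X$.
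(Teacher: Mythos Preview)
The paper does not give its own proof of this theorem; it is stated without argument and attributed to Rangaswamy's unpublished manuscript \cite{Ranga3}. So there is no ``paper's proof'' to compare against, and I can only assess your proposal on its own merits.

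Your argument for $(ii)\Leftrightarrow(iii)$ is clean and correct. The direction $(ii)\Rightarrow(i)$ has the right strategy --- build a cofinal descending chain from the countable witness set and attach a graded-simple module at the bottom --- but the case analysis is underspecified: you need to say exactly what happens when the chain $(u_k)$ stabilises at a vertex that is neither a sink nor on a cycle (e.g.\ an infinite emitter), and you must actually \emph{prove} that the relevant Chen-type modules are graded-simple without Condition $(L)$, rather than just assert it. This is the technical heart of the graded theory and cannot be waved through.

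The serious gap is in $(i)\Rightarrow(ii)$. You invoke a ``graded analogue of Chen's classification'' asserting that every graded-simple graded $L_K(E)$-module is, up to shift, of Chen type. No such complete classification is available in the literature for arbitrary graphs; even in the ungraded setting Chen's modules do not exhaust all simples. A result of that strength would be a theorem in its own right, and you neither prove it nor cite it. The honest route is to work directly with a single faithful graded-simple module $M$: fix a homogeneous $0\ne m\in vM$, use graded simplicity to write $M=L_K(E)m$, and then for each vertex $u$ exploit $uM\ne 0$ to produce paths from $u$ into a countable set of vertices extracted from the action on $m$ --- mirroring the Abrams--Bell--Rangaswamy argument in the ungraded case --- rather than first forcing $M$ into a prescribed shape.
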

The author in \cite{Ranga3}, provides many examples of graded von Neumann regular rings which are graded prime but not graded primitive. 

%and, if this is not the case, whether necessary and sufficient conditions can be given under which a graded prime Leavitt path algebra becomes a graded primitive ring.

\subsection{Maximal Ideals}
This section is quoted from \cite{EK} by Esin and the first named author.  

In a unital ring, any maximal ideal is also a prime ideal. However, this is not necessarily true for a non-unital ring.  
Consider, for instance, the non-unital ring $2\mathbb{Z}$, and its ideal $4\mathbb{Z}$. Notice that $4\mathbb{Z}$ is a maximal ideal, but not prime ideal in $2\mathbb{Z}$. The Leavitt path algebra is a unital ring, only if $E^0$ is finite. So it is worthwhile to study the maximal ideals in a non-unital setting. The following argument on maximal and prime ideals in non-unital Leavitt path algebras appears in \cite[pp.86-87]{Ranga1}.
\begin{proposition}	In a ring $R$ satisfying $R^2=R$, any maximal ideal is a prime ideal. Hence, in any Leavitt 
	path algebra, any maximal ideal is a prime ideal.
\end{proposition}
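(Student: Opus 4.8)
The plan is to argue by contradiction, invoking the hypothesis $R^2 = R$ at exactly one crucial point. Let $M$ be a maximal two-sided ideal of $R$ (maximal, as always, among the \emph{proper} two-sided ideals), and suppose toward a contradiction that $M$ is not prime. Then there are two-sided ideals $I$ and $J$ of $R$ with $IJ \subseteq M$ but $I \not\subseteq M$ and $J \not\subseteq M$. Since $M + I$ and $M + J$ are two-sided ideals properly containing $M$, maximality of $M$ forces $M + I = R = M + J$.

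The next step is to compute the ideal product $(M+I)(M+J)$. A typical generator is $(m_1 + a)(m_2 + b) = m_1 m_2 + m_1 b + a m_2 + a b$ with $m_1, m_2 \in M$, $a \in I$, $b \in J$; the first three summands lie in $M$ because $M$ absorbs multiplication by $R$ on both sides, and $ab \in IJ \subseteq M$. Hence $(M+I)(M+J) \subseteq M$. On the other hand $(M+I)(M+J) = R \cdot R = R^2 = R$ by hypothesis, so $R \subseteq M$, contradicting the properness of $M$. Therefore every maximal ideal of $R$ is prime.

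For the second assertion it suffices to note that every Leavitt path algebra satisfies $R^2 = R$. Indeed, as recalled earlier, for each $x \in L_K(E)$ there is an idempotent $f$ (a sum of finitely many distinct vertices) with $x = fxf$, so $x = f\,(xf) \in L_K(E)^2$; equivalently, $L_K(E)$ is a graded ideal of itself and so $L_K(E)^2 = L_K(E)$ by the characterization of graded ideals above. Applying the first part yields that every maximal ideal of $L_K(E)$ is prime.

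I do not anticipate a genuine obstacle here; the proof is short. The only points deserving care are that ``maximal ideal'' must be read as maximal among proper two-sided ideals (this is what lets us pass from $I \not\subseteq M$ to $M + I = R$), and that the hypothesis $R^2 = R$ is truly indispensable --- without it the ring $R$ itself can fail to be prime while possessing maximal ideals, exactly as the example $4\mathbb{Z} \subset 2\mathbb{Z}$ in the text illustrates.
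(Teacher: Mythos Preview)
Your proof is correct and follows essentially the same route as the paper: both arguments expand $(M+I)(M+J)$, use maximality to identify this with $R^2=R$, and conclude that $IJ\not\subseteq M$ (the paper phrases it directly rather than by contradiction, but the computation is identical). For the Leavitt path algebra case, the paper likewise invokes local units to get $R^2=R$.
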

\begin{proof}
Suppose $R^2=R$, and let $M$ be a maximal ideal of $R$ such that $A \nsubseteq M$ and $B \nsubseteq M$ for some ideals $A,B$ of $R$. 
Then $R=R^2=(M+A)(M+B)= M^2 + AM + MB + AB \subseteq M + AB$. Then $M +AB =R$, and $AB \nsubseteq M$. 
Thus $M$ is a prime ideal.  
Now, since any Leavitt path algebra $R$ is a ring with local units, $R^2=R$ is satisfied and the result holds.
\end{proof}

As stated in \cite[Lemma 3.6]{Ranga1}, in a Leavitt path algebra $L_K(E)$, the largest graded ideal contained in any ideal $N$ (which is denoted by $gr(N)$) is the ideal generated by the admissible pair $(H,S)$ where $H=N \cap E^0$, and $S=\{ v \in B_H | v^H \in N\}$, i.e. 
$gr(N) = I(H,S)$.
One useful observation is that: if a non-graded ideal $N$ is a maximal element in $\mathscr{L}(L_K(E))$, the lattice of all two-sided ideals of a Leavitt path algebra, then $gr(N)$ is a maximal element in $\mathscr{L}_{gr}(L_K(E))$, the lattice of all two-sided graded ideals of this Leavitt path algebra (e.g. Example \ref{Maxgraded-nongraded-NoK}).

Maximal ideals always exist in a unital ring; however, this is not always true in a non-unital ring. 
Consider the Leavitt path algebra of the next example: 

\begin{example} \label{ikikulak}
\rm
Let $E$ be the row-finite graph with $E^{0}=\{v_{i} :i=1,2,\ldots \}$ and for each $i$, there is an edge $e_{i}$ with
$r(e_{i})=v_{i}$, $s(e_{i})=v_{i+1}$, also at each $v_{i}$ there are two
loops $f_{i},g_{i}$ so that $v_{i}=s(f_{i})=r(f_{i})=s(g_{i})=r(g_{i})$: 
\[ 
\xymatrix{ \ar@{.>}[r] &
\bullet_{v_3}\ar@(u,l)_{f_3} \ar@(u,r)^{g_3} \ar@/_.3pc/[rr]_{e_2} & &  
\bullet_{v_2}\ar@(u,l)_{f_2} \ar@(u,r)^{g_2} \ar@/_.3pc/[rr]_{e_1}  && \bullet_{v_1}\ar@(u,l)_{f_1} \ar@(u,r)^{g_1} }
\]
The non-empty proper hereditary saturated subsets of vertices in $E$ are the sets $H_{n}=\{v_{1},\ldots,v_{n}\}$ 
for some $n\geq 1$ and they form an infinite chain under set inclusion. 
Graph $E$ satisfies Condition (K), so all ideals are graded, generated by $H_n$ for some $n$ and 
they form a chain under set inclusion. As the chain of ideals does not terminate, $L_{K}(E)$ does not contain any maximal ideals.
Note also that, $E^{0}\backslash(H_{n},\emptyset)$ is $MT-3$ for each $n$, 
thus all ideals are prime ideals.
\end{example}

A well-established question is to find out when a maximal ideal exist in a non-unital Leavitt path algebra. The necessary and sufficient condition depends on the existence of a maximal hereditary and saturated subset of $E^0$ as proved in \cite{EK}.  
\begin{theorem} 
(Existence Theorem) $L_K(E)$ has a maximal ideal if and only if $ \mathscr{H}_E$ has a maximal element.
\end{theorem}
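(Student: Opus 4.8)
The plan is to prove the two implications separately, using the lattice isomorphism between graded ideals and the pair-lattice $\mathscr{T}_E$ (the Structure Theorem) as the main bridge, together with the observation already recorded in the text that for a \emph{non-graded} maximal ideal $N$, the largest graded subideal $gr(N) = I(H,S)$ is maximal among \emph{graded} ideals.

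\emph{($\Leftarrow$) Suppose $\mathscr{H}_E$ has a maximal element $H_0$.} I would first dispose of the degenerate case $H_0 = E^0$: then $\emptyset$ and $E^0$ are the only hereditary saturated subsets, so $L_K(E)$ is graded simple, and one checks directly (using $L_K(E) = L_K(E)^2$ and the fact that any proper ideal $N$ satisfies $N\cap E^0 \subsetneq E^0$, hence $N\cap E^0 = \emptyset$, so $N$ is ``small'') that either $L_K(E)$ is simple — in which case $\{0\}$ is maximal — or every proper ideal is non-graded and contained in a maximal one by a Zorn's lemma argument on the chain $\{N^n\}$; I would lean on the $K[x,x^{-1}]$ model here since graded simple forces a very restricted graph. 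In the generic case $H_0 \subsetneq E^0$, consider the ideal $J = I(H_0, B_{H_0})$, the largest ideal $N$ with $N\cap E^0 = H_0$. Any ideal strictly containing $J$ has $N\cap E^0 \supsetneq H_0$, hence (maximality of $H_0$ in $\mathscr{H}_E$, and $N\cap E^0 \in \mathscr{H}_E$ by the lemma) $N\cap E^0 = E^0$, which forces $N = L_K(E)$ since $1$-local-unit elements land in $N$. Thus $J$ is a maximal ideal.

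\emph{($\Rightarrow$) Suppose $L_K(E)$ has a maximal ideal $M$.} Let $H = M\cap E^0 \in \mathscr{H}_E$. I claim $H$ is maximal in $\mathscr{H}_E$. If not, pick $H' \in \mathscr{H}_E$ with $H \subsetneq H' \subsetneq E^0$ (note $H' \neq E^0$: if the only hereditary saturated set properly above $H$ were $E^0$ then $H$ would already be maximal). Then the graded ideal $I(H', B_{H'})$ contains $H'$, hence contains a vertex not in $M$, so $I(H',B_{H'}) \not\subseteq M$; since $H' \subsetneq E^0$ we have $I(H',B_{H'}) \neq L_K(E)$. Now $M + I(H',B_{H'})$ is an ideal strictly containing $M$ and not equal to $L_K(E)$ — the latter because $(M + I(H',B_{H'}))\cap E^0$ is a hereditary saturated set containing $H \cup H' = H'$ but, after checking it cannot jump all the way to $E^0$, this needs the observation that the intersection of the sum with $E^0$ is controlled by $H'$ — contradicting maximality of $M$.

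The step I expect to be the genuine obstacle is precisely showing, in the ($\Rightarrow$) direction, that $M + I(H',B_{H'})$ is proper, i.e. that adjoining a ``slightly larger'' hereditary saturated set to $M$ does not suddenly generate the whole algebra; equivalently, controlling $(M+I(H',B_{H'}))\cap E^0$. The clean way around this is to work on the quotient: pass to $L_K(E)/M$, a simple ring, and transport $H'$ to a nonzero hereditary saturated subset of the relevant quotient graph $E/(H,S)$ that is proper, contradicting graded simplicity of $L_K(E)/M$ — using the correspondence $L_K(E)/I(H,S) \cong L_K(E/(H,S))$ and the characterization of graded simple Leavitt path algebras. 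I would also need to handle breaking vertices carefully throughout (the admissible-pair bookkeeping), and treat the non-graded case of $M$ by replacing $M$ with $gr(M)$ at the outset, since $gr(M)$ is maximal among graded ideals and $gr(M)\cap E^0 = M\cap E^0 = H$, so the argument about $H$ being maximal in $\mathscr{H}_E$ is unaffected.
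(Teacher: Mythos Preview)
Your $(\Rightarrow)$ direction is essentially the paper's: reduce to $gr(M)=I(H,B_H)$ being maximal among graded ideals (this is the observation you cite from the text), then read off maximality of $H$ in $\mathscr{H}_E$ from the lattice isomorphism $\mathscr{L}_{gr}(L_K(E))\cong\mathscr{T}_E$. Your detour through ``$M+I(H',B_{H'})$ is proper'' is unnecessary once you have passed to $gr(M)$, and you correctly flag it as the sticking point; just drop it and use the lattice picture directly.

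The $(\Leftarrow)$ direction, however, has a real gap. You assert that $J=I(H_0,B_{H_0})$ is ``the largest ideal $N$ with $N\cap E^0=H_0$'' and hence that any ideal strictly containing $J$ must acquire a new vertex. Both statements are false in general: $J$ is only the largest \emph{graded} ideal with that vertex set. If the quotient graph $E\setminus(H_0,B_{H_0})$ fails Condition~$(L)$ --- i.e.\ contains a cycle $c$ without exit --- then for any $0\neq p(x)\in K[x,x^{-1}]$ the ideal $\langle H_0,\{v^{H_0}:v\in B_{H_0}\},\,p(c)\rangle$ strictly contains $J$, is proper, and still satisfies $N\cap E^0=H_0$. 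So $J$ is \emph{not} maximal in that case, and your argument collapses.

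This is precisely why the paper's proof bifurcates on Condition~$(L)$ for the quotient: if $E\setminus(H_0,B_{H_0})$ has Condition~$(L)$ then your argument goes through and $I(H_0,B_{H_0})$ itself is maximal; if not, one must instead produce a \emph{non-graded} maximal ideal above $I(H_0,B_{H_0})$, using the $K[x,x^{-1}]$ structure coming from the exit-free cycle in the quotient (choose an irreducible $f$ and take $\langle H_0,\{v^{H_0}\},f(c)\rangle$). Your ``degenerate case $H_0=E^0$'' discussion is a confused shadow of this dichotomy (note $E^0$ is always the top of $\mathscr{H}_E$; the relevant maximal elements are the proper ones), and the $K[x,x^{-1}]$ intuition you invoke there is exactly what is needed --- but in the quotient by $(H_0,B_{H_0})$, not in $L_K(E)$ itself.
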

\begin{proof} (Sketch: see \cite{EK} for details)
Assume $L_K(E)$ has a maximal ideal $M$, then there are two cases: \\
if $M$ is a graded ideal, then $M= I(H,S)$ for some $H \in \mathscr{H}_E$ and $S=\{ v \in B_H | v^H \in M\}$. However, 
$M= I(H,S) \leq I(H,B_H)$, and as $M$ is a maximal ideal, $S = B_H$. Then it can be shown that:  
$I(H,B_H)$ is a maximal ideal in $L_K(E)$ if and only if $H$ is a maximal element in $\mathscr{H}_E$ and the quotient graph $E \backslash (H,B_H)$ has Condition$(L)$.

If $M$ is a non-graded maximal ideal, then $gr(M) = I(H,S)$ is a maximal graded ideal where $H=M \cap E^0$, and 
$S=\{ v \in B_H | v^H \in M\}$. Similarly since $gr(M)$ is maximal, $S = B_H$. Again, it can be shown that:     
$H$ is a maximal element in $\mathscr{H}_E$ with $E \backslash (H,B_H)$ not satisfying Condition$(L)$, if and only if there is a maximal non-graded ideal $M$ containing $I(H,B_H)$ with $H=M \cap E^0$.

This completes the proof.
\end{proof}

Moreover, the poset structure of $ \mathscr{H}_E$ determines whether every ideal of the Leavitt path algebra 
is contained in a maximal ideal. 
\begin{theorem} The following assertions are equivalent: 
\begin{itemize}
\item [(i)] Every element $X \in \mathscr{H}_E$ is contained in a maximal element $Z \in \mathscr{H}_E$. 
\item [(ii)] Every ideal of $L_K(E)$ is contained in a maximal ideal.
\end{itemize}
\end{theorem}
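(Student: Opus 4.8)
The plan is to derive the equivalence from the Existence Theorem together with the structural results collected above, using throughout that every maximal ideal of a Leavitt path algebra is prime and hence is of one of the three types in Theorem \ref{prime}.

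\emph{The implication (ii)$\Rightarrow$(i).} Given a proper $X\in\mathscr H_E$, the graded ideal $I(X,B_X)$ is proper, so by (ii) it lies in a maximal ideal $M$. Whether or not $M$ is graded, $gr(M)$ is a maximal element of $\mathscr L_{gr}(L_K(E))$ --- this is the observation recorded just before the Existence Theorem in the non-graded case, and is clear in the graded case --- and since $I(X,B_X)$ is itself graded we get $I(X,B_X)=gr(I(X,B_X))\subseteq gr(M)$. Writing $gr(M)=I(H,S)$ with $H=M\cap E^0$, the Structure Theorem for graded ideals together with maximality of $gr(M)$ first forces $S=B_H$ (otherwise $I(H,S)\subsetneq I(H,B_H)\subsetneq L_K(E)$) and then forces $H$ to be a maximal element of $\mathscr H_E$ (otherwise $H\subsetneq H'\subsetneq E^0$ in $\mathscr H_E$ would give $I(H,B_H)\subsetneq I(H',B_{H'})\subsetneq L_K(E)$). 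Since $X=I(X,B_X)\cap E^0\subseteq M\cap E^0=H$, this $H$ is a maximal element of $\mathscr H_E$ above $X$.

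\emph{The implication (i)$\Rightarrow$(ii): reduction and the easy case.} Given a proper ideal $N$, put $H_0=N\cap E^0$ and $S_0=\{v\in B_{H_0}:v^{H_0}\in N\}$, so $gr(N)=I(H_0,S_0)\subseteq N$; passing to $L_K(E)/I(H_0,S_0)\cong L_K(E/(H_0,S_0))$ and replacing $N$ by its image reduces to the case $gr(N)=0$, since that image meets the vertex set of $E/(H_0,S_0)$ trivially, since (i) is inherited by $E/(H_0,S_0)$ (its lattice of hereditary saturated subsets being order-isomorphic to $\{H\in\mathscr H_E:H_0\subseteq H\}$, so a maximal element of $\mathscr H_E$ above a lift of a given subset descends to a maximal element downstairs), and since a maximal ideal of the quotient above the image of $N$ pulls back to one of $L_K(E)$ above $N$. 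Assume then $gr(N)=0$, i.e.\ $N\cap E^0=\emptyset$. If $N=0$, then (i) applied to $\emptyset$ shows $\mathscr H_E$ has a maximal element, so the Existence Theorem produces a maximal ideal, which contains $N$. If $N\neq0$, then by Rangaswamy's description of the generators of an ideal, $N=\langle Y\rangle$ for a set $Y$ of mutually orthogonal elements $y_\alpha=u_\alpha+\sum_i k_{\alpha i}g_\alpha^{r_{\alpha i}}$ with some $k_{\alpha i}\neq0$, each $g_\alpha$ a cycle without exits in $E$ based at $u_\alpha$ (so $V(g_\alpha)$ is hereditary). Let $\tilde H\in\mathscr H_E$ be the hereditary saturated closure of $\{u_\alpha:\alpha\}$. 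If $\tilde H\neq E^0$, pick by (i) a maximal $Z\in\mathscr H_E$ with $\tilde H\subseteq Z$; since $u_\alpha\in Z$ forces $V(g_\alpha)\subseteq T(u_\alpha)\subseteq Z$, every $y_\alpha$, hence $N$, lies in $I(Z)\subseteq I(Z,B_Z)$, and as $Z$ is maximal the Existence Theorem supplies a maximal ideal $M\supseteq I(Z,B_Z)$, whence $N\subseteq M$.

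\emph{The main obstacle: the case $\tilde H=E^0$.} Here I would fix one generator, write $g=g_{\alpha_0}$ based at $u=u_{\alpha_0}$, note that $H:=E^0\setminus M(u)$ is hereditary, saturated (because $g$ has no exit) and proper, and choose by (i) a maximal $Z\in\mathscr H_E$ with $H\subseteq Z$, so that $E^0\setminus Z\subseteq M(u)$ and, as $\tilde H=E^0$ while $Z$ is proper, some $u_\alpha\notin Z$. The crucial rigidity fact to establish is that a cycle without exits through a vertex $w$ with $w\geq u$ must contain $u$ and agree, up to rebasing, with $g$; consequently every \emph{offending} generator (one with $u_\alpha\notin Z$, equivalently $u_\alpha\geq u$) lives on the single cycle $g$, so $g$ survives without exits in the graded simple graph $E/(Z,B_Z)$, and $E/(Z,B_Z)$ fails Condition $(L)$. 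All offending generators then lie in the corners $wL_K(E)w\cong K[x,x^{-1}]$ for $w\in V(g)$, and since these corners are compatibly identified through the partial isometries along $g$, $N$ cuts out there one ideal $(p(x))$ with $p$ a nonzero non-unit. Since a Leavitt path algebra over a graded simple graph carrying a cycle without exits is a (possibly infinite) matrix ring over $K[x,x^{-1}]$, for every irreducible $f$ the prime ideal $\langle Z,\{v^Z:v\in B_Z\},f(g)\rangle$ of Theorem \ref{prime}(iii) is in fact maximal; taking $f$ to be an irreducible factor of $p$ puts every offending $y_\alpha$ in this ideal, while every non-offending $y_\alpha$ lies in $I(Z)$, so this maximal ideal contains $N$. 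The hard part is precisely this last step --- using the orthogonality of Rangaswamy's generating set, the exit-freeness of $g$, and the corner identifications to confine all the non-graded generators of $N$ inside one maximal ideal of type \ref{prime}(iii) --- whereas the graded situation, and the reduction to it, follow transparently from the Existence Theorem.
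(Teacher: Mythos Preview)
The paper does not supply its own proof of this theorem; it simply cites \cite{EK}. So there is no ``paper's proof'' to compare against, and your attempt must stand on its own.

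Your argument for (ii)$\Rightarrow$(i) is correct and clean.

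For (i)$\Rightarrow$(ii), the overall architecture is reasonable and the ``main obstacle'' case is handled correctly: the rigidity argument showing that all offending cycles coincide with $g$, together with the observation that $N\cap E^0=\emptyset$ forces $uNu$ to be a proper ideal of $uL_K(E)u\cong K[x,x^{-1}]$ (so the gcd of the $p_\alpha$ is a non-unit and an irreducible factor exists), is exactly what is needed.

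However, there is a genuine gap in your reduction step. You assert that $\mathscr{H}_{E/(H_0,S_0)}$ is order-isomorphic to $\{H\in\mathscr{H}_E: H_0\subseteq H\}$, and use this to conclude that (i) passes to the quotient graph. That order-isomorphism is \emph{false} when $S_0\subsetneq B_{H_0}$: the quotient graph then acquires extra vertices $v'$ for $v\in B_{H_0}\setminus S_0$, and these create hereditary saturated subsets with no counterpart upstairs. For a concrete instance, take $E^0=\{v,w,u\}$ with $v$ an infinite emitter sending infinitely many edges to the sink $w$ and one edge to the sink $u$; set $H_0=\{w\}$, $S_0=\emptyset$. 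Then $\{H\in\mathscr{H}_E:H_0\subseteq H\}$ has three elements, while $\mathscr{H}_{E/(H_0,S_0)}$ has four. The correct correspondence is between $\mathscr T_{E/(H_0,S_0)}$ and $\{(H,S)\in\mathscr T_E:(H,S)\geq(H_0,S_0)\}$, not between the $\mathscr H$-lattices, and the inheritance of condition (i) by the quotient must be argued through that --- which is more delicate, since a maximal $Z\in\mathscr H_E$ containing $H_0$ need not satisfy $S_0\subseteq Z\cup B_Z$, so $I(Z,B_Z)$ need not contain $I(H_0,S_0)$. You need either to justify inheritance of (i) by a different route, or to avoid the reduction altogether and treat the generators $v^{H_0}$ directly alongside the $y_\alpha$. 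For row-finite graphs there are no breaking vertices and your reduction is valid as written.
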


\begin{example}\label{MaxgradedNoK} 
	\rm
	Let $E$ be the graph 
	\medskip
	
	$$ \xymatrix{
		{\bullet}^u  \ar@(u,l) \ar@(d,l)   \ar@{->}[r]  & {\bullet}^v \ar@{->}[r]
		& {\bullet}^w \ar@(ur,dr)^{c} }
	$$ 
	\medskip
	
	Then $E$ does not satisfy Condition (K), so the Leavitt path algebra on $E$ has 
	both graded and non-graded ideals. Let $Q$ be the graded ideal generated by the hereditary saturated set $H=\{v,w\}$. $Q$ is a maximal ideal as $L/Q$ is isomorphic to $L_K(E \backslash H)$ which is also isomorphic to the simple Leavitt algebra $L(1,2)$ (See the Example \ref{Ekornek}). By using Theorem \ref{prime}, we classify the prime ideals in $L$. There are infinitely many non-graded prime ideals each generated by $f(c)$ where $f(x)$ is an irreducible polynomial in $K[x,x^{-1}]$ which are all contained in $Q$. Also, the trivial ideal $\{ 0\}$ is prime as $E$ satisfies condition $MT-3$ and $L_K(E)$ has a unique maximal element $Q$. 
\end{example}

\medskip

We now give an example of a graph with infinitely many hereditary saturated sets and the corresponding Leavitt path algebra has 
a unique maximal ideal which is graded.

\begin{example} \label{tersikikulak}
	\rm
	Let $E$ be a graph with $E^{0}=\{v_{i} :i=1,2,\ldots \}$. For each $i$, there is an edge $e_{i}$ with
	$s(e_{i})=v_{i}$ and $r(e_{i})=v_{i+1}$ and at each $v_{i}$ there are two
	loops $f_{i},g_{i}$ so that $v_{i}=s(f_{i})=r(f_{i})=s(g_{i})=r(g_{i})$. Thus
	$E$ is the graph
	\[ 
	\xymatrix{  & \ar@{.>}[l]
		\bullet_{v_3}\ar@(u,l)_{f_3} \ar@(u,r)^{g_3} %\ar@/_.3pc/[rr]_{e_2} 
		& & \bullet_{v_2}\ar@(u,l)_{f_2} \ar@(u,r)^{g_2} \ar@/^.3pc/[ll]_{e_2}  && \bullet_{v_1}\ar@(u,l)_{f_1} \ar@(u,r)^{g_1} \ar@/^.3pc/[ll]_{e_1}}
	\]
	Now $E$ is a row-finite graph and the non-empty proper hereditary
	saturated subsets of vertices in $E$ are the sets $H_{n}=\{v_{n},v_{n+1},\ldots\}$ 
	for some $n\geq 2$ and $H_{n+1} \subsetneq H_n$ form an infinite chain under set inclusion and  
	$H_{2}=\{v_{2},v_{3},\ldots\}$ is the maximal element in $\mathcal{H}_E$. 
	The graph $E$ satisfies Condition (K), so all ideals are graded, generated by $H_n$ for some $n$. 
	So $L_{K}(E)$ contains a unique maximal ideal $I(H_2)$. 
	Note also that, $E^{0}\backslash H_{n}$ is $MT-3$ for each $n$, 
	thus all ideals of $L$ are prime ideals.
\end{example}

In a Leavitt path algebra, if a unique maximal ideal exists, then it is a graded ideal. Also, every maximal ideal is graded in $L_K(E)$ if and only if for every maximal element $H$ in $\mathscr{H}_E$, $E \backslash (H,B_H)$ satisfies Condition$(L)$. Note that there are Leavitt path algebras with both graded and non-graded maximal ideals as the following example illustrates. 

\begin{example}\label{Maxgraded-nongraded-NoK} 
\rm
Let $E$ be the graph 
\medskip
$$ \xymatrix{
{\bullet}^u  \ar@(u,l) \ar@(d,l)    & {\bullet}^v \ar@{->}[r] \ar@{->}[l]
 & {\bullet}^w \ar@(ur,dr)^{c} }
$$ 
\medskip

Then the Leavitt path algebra on $E$ has both graded and non-graded maximal ideals. The set $\mathcal{H}_E$ is finite and hence any ideal is contained in a maximal ideal. The trivial ideal $\{ 0\}$ which is a graded ideal generated by the empty set, is not prime as $E$ does not satisfy condition $MT-3$. There are infinitely many non-graded prime ideals each generated by $f(c)$ where $f(x)$ is an irreducible polynomial in $K[x,x^{-1}]$ which all contain $\{ 0\}$.
Let $N$ be the graded ideal generated by the hereditary saturated set $H=\{u\}$ and in this case, the quotient graph $E \backslash H$ does not satisfy condition (L). Then there are infinitely many maximal non-graded ideals each generated by $f(c)$ where $f(x)$ is an irreducible polynomial in $K[x,x^{-1}]$ which all contain $N$. 
Also, let $Q$ be the graded ideal generated by the hereditary saturated set $H=\{w\}$. In this case, the quotient graph $E \backslash H$ satisfy condition (L). Hence, $Q$ is a maximal ideal. 
 
$L_K(E)$ has a infinitely many maximal ideals, one of them is graded, namely $Q$ and infinitely many are non-graded ideals whose graded part is $N$. 
\end{example}

It is an interesting question to answer when all non-zero prime ideals are maximal, as these rings are called rings with Krull dimension zero. 
In fact, Leavitt path algebras with prescribed Krull dimension are studied in \cite{Ranga1}. We conclude this article with two results from \cite{Ranga1}. 
\begin{theorem} \cite[Theorem 6.1]{Ranga1}
	Let $E$ be an arbitrary graph and $K$ be any field. Then every non-zero prime ideal of the Leavitt path algebra $L_K(E)$ is maximal if and only if $E$ satisfies one of the following two conditions:\\
	
	\textbf{Condition I:} (i) $E^0$ is a maximal tail; (ii) The only hereditary saturated subsets of $E^0$ are $E^0$ and $\emptyset$; (iii) $E$ does not satisfy the Condition$(K)$.\\
	
	\textbf{Condition II:} (a) $E$ satisfies the Condition$(K)$; (b) For each maximal tail M, the restricted graph $E_M$ contains no proper non-empty hereditary saturated subsets; (c) If $H$ is a hereditary saturated subset of $E^0$, then for each $u \in B_H, M(u) \subsetneqq E^0 \backslash H$
	\end{theorem}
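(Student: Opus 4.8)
The plan is to prove the equivalence by analyzing the two structural conditions separately, in each case translating the ring-theoretic statement ``every nonzero prime ideal is maximal'' into graph-theoretic language using Theorem \ref{prime} (the classification of prime ideals) together with the lattice isomorphism between graded ideals and $\mathscr{T}_E$. The key dichotomy comes from whether or not $L_K(E)$ is prime, i.e.\ whether $\{0\}$ is a prime ideal: by the Corollary to Theorem \ref{prime}, this happens exactly when $E^0$ satisfies $MT-3$. So I would split into the case ``$E^0$ is $MT-3$'' (which will lead to Condition I) and the case ``$E^0$ is not $MT-3$'' (which will lead to Condition II), and in each case ask what it means for \emph{every} nonzero prime to be maximal.

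First I would handle the case where $\{0\}$ is prime, i.e.\ $E^0$ is a maximal tail (this is exactly $MT-3$ plus fullness/$(MT\text{-}1)$, $(MT\text{-}2)$, which hold automatically for $E^0$ itself). If some nonzero prime ideal existed, then since $\{0\}$ is prime and every nonzero prime must be maximal, $\{0\}$ would have to be the unique prime ideal — but that forces, via Theorem \ref{prime}, that there are no nonzero graded primes (so the only hereditary saturated subsets are $\emptyset$ and $E^0$, since any proper nonzero $H\in\mathscr H_E$ with $E^0\setminus H$ downward directed yields a prime above $\{0\}$) \emph{and} no non-graded primes of type (iii) (so $E$ has no cycle without $K$ based at a $u$ with $M(u)=E^0$). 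Conversely, if $\{0\}$ is the only prime then vacuously every \emph{nonzero} prime is maximal. So in this branch ``every nonzero prime is maximal'' $\iff$ ``$\{0\}$ is the only prime'' $\iff$ conditions (i),(ii),(iii) of Condition I. The subtlety I would be careful about: condition (iii) says $E$ does \emph{not} satisfy Condition $(K)$ — this is the negation of ``$\{0\}$ is the only prime when there are no proper hereditary saturated sets'', because under (ii), Condition $(K)$ is equivalent to all ideals being graded, hence to $\{0\}$ being the only prime; so NOT having $(K)$ is what produces the non-graded primes that must be ruled in rather than ruled out. I would need to re-examine the exact logic here, since this is the most delicate point of the first branch.

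Next I would handle the case where $\{0\}$ is not prime, equivalently $E^0$ is not $MT-3$. Here every prime ideal is nonzero, so the condition becomes simply ``every prime ideal is maximal''. I would use the lattice isomorphism $\mathscr L_{gr}(L_K(E))\cong\mathscr T_E$ and Theorem \ref{prime} to enumerate the primes. The requirement that no prime properly contains another prime translates, for graded primes, into: no containment $I(H_1,B_{H_1})\subsetneq I(H_2,B_{H_2})$ between primes of type (i), which by downward-directedness of complements and the structure of $\mathscr{T}_E$ forces each maximal tail's restriction graph $E_M$ to have no proper nonempty hereditary saturated subsets — giving (b). The absence of non-graded primes (types (ii) and (iii)) strictly below other primes, and the fact that type-(iii) primes sit below type-(ii) or graded primes, forces Condition $(K)$ — giving (a) — and also forces the $B_H$ condition $M(u)\subsetneq E^0\setminus H$ for all $u\in B_H$ to prevent type-(ii) primes from being non-maximal — giving (c). Conversely, assuming (a),(b),(c), Condition $(K)$ makes all ideals graded (no type-(ii) or (iii) primes survive), and (b) together with (c) forces the graded prime lattice to be an antichain, so every prime is maximal.

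The main obstacle I expect is the bookkeeping in the second branch: correctly characterizing when one graded prime $I(H_1,B_{H_1})$ is contained in another $I(H_2,B_{H_2})$ purely in terms of the maximal tails $E^0\setminus H_i$ and their restriction graphs, and making sure conditions (b) and (c) are exactly what is needed — no more, no less — to kill all such containments. This requires a careful analysis of how the ``downward directed complement'' primes of Theorem \ref{prime}(i) interact with the maximal-tail decomposition of $E^0$, and how breaking vertices (the $B_H$ data) can create the borderline type-(ii) primes that condition (c) is designed to eliminate. In contrast, the use of Condition $(K)$ to eliminate non-graded primes is routine given the earlier results ($(K)$ $\Rightarrow$ all ideals graded), and the first branch is mostly a matter of correctly negating Condition $(K)$ under hypothesis (ii). I would present the proof as two lemmas (one per branch) and then assemble them, citing \cite{Ranga1} for the detailed verifications of the lattice computations.
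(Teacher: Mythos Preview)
The paper is a survey and does not supply its own proof of this theorem; it merely states the result and cites \cite[Theorem~6.1]{Ranga1}. So there is no in-paper argument to compare against, and your proposal should be judged on its own merits.

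There is a genuine gap in your first branch. From ``$\{0\}$ is prime and every nonzero prime is maximal'' you conclude that ``$\{0\}$ would have to be the unique prime ideal.'' That inference is false: in any PID (e.g.\ $K[x,x^{-1}]$ itself) the zero ideal is prime and every nonzero prime is maximal, yet there are many nonzero primes. Your own unease about why Condition~I requires that $E$ \emph{not} satisfy Condition~$(K)$ is exactly this error surfacing: under (i) and (ii) the algebra is graded simple and prime, and if additionally $(K)$ fails there is a cycle without~$K$, producing infinitely many nonzero type-(iii) primes $\langle f(c)\rangle$ --- all of which \emph{are} maximal because the quotient is a matrix ring over a field. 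So Condition~I is precisely the situation where nonzero primes exist and are all maximal while $\{0\}$ sits strictly below them; it is not the situation ``$\{0\}$ is the only prime.''

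Relatedly, your proposed dichotomy ``$E^0$ is $MT$-$3$ $\Rightarrow$ Condition~I; $E^0$ is not $MT$-$3$ $\Rightarrow$ Condition~II'' does not line up with the statement. A graph can satisfy Condition~II and still have $E^0$ downward directed (take $E$ with Condition~$(K)$ and $\mathscr H_E=\{\emptyset,E^0\}$: then $L_K(E)$ is simple, $E^0$ is a maximal tail, and (a)--(c) of Condition~II hold). The natural case split, and the one that matches the two conditions, is whether or not $E$ satisfies Condition~$(K)$: this is what separates the type-(iii) (non-graded) primes of Theorem~\ref{prime} from the purely graded picture, and it is the split actually used in \cite{Ranga1}. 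Your analysis of the second branch (under $(K)$, translate ``every prime is maximal'' into (b) and (c) via the lattice isomorphism) is along the right lines, but you should rebuild the first branch around ``$(K)$ fails'' rather than around ``$\{0\}$ is prime.''
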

	When $E$ is finite, the answer is much simpler. 
\begin{corollary}
	Let $E$ be a finite graph. Then every non-zero prime ideal of $L_K(E)$ is maximal if and only if either $L_K(E) \cong M_n(K[x,x^{-1}])$ for some positive integer $n$ or $E$ satisfies the Condition$(K)$ and, for each maximal tail $M$, the restricted graph $E_M$ contains no proper non-empty hereditary saturated subsets of vertices.
\end{corollary}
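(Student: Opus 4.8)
The plan is to obtain the Corollary by feeding the finiteness of $E$ into the preceding theorem, Rangaswamy's \cite[Theorem 6.1]{Ranga1}, and simplifying its two conditions. Since a finite graph has no infinite emitters, $B_H=\emptyset$ for every $H\in\mathscr H_E$, so clause (c) of Condition II holds vacuously; hence, for finite $E$, Condition II collapses to precisely ``$E$ satisfies Condition $(K)$ and, for each maximal tail $M$, the restricted graph $E_M$ has no proper non-empty hereditary saturated subset of vertices'', which is the second alternative in the statement. It therefore remains to prove that, for a finite graph, Condition I is equivalent to $L_K(E)\cong M_n(K[x,x^{-1}])$ for some $n\ge 1$. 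Moreover, since Condition I requires $E$ to fail Condition $(K)$ while Condition II requires Condition $(K)$, the two alternatives are mutually exclusive, so the ``or'' in the Corollary is unambiguous.

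For the implication $L_K(E)\cong M_n(K[x,x^{-1}])\Rightarrow$ Condition I, I would check the three clauses in turn. Clause (ii) (only $\emptyset$ and $E^0$ are hereditary and saturated): by the characterization that an ideal $N$ of $L_K(E)$ is graded exactly when $N^2=N$, graded simplicity of $L_K(E)$ amounts to having no non-trivial idempotent ideal; but a two-sided ideal of $M_n(K[x,x^{-1}])$ has the form $M_n((f))$, which is idempotent only when $f$ is a unit or $f=0$, so $L_K(E)$ is graded simple and clause (ii) follows from \cite[Cor.\ 2.5.15]{AAS}. Clause (i): $M_n(K[x,x^{-1}])\ne 0$, so $E^0\ne\emptyset$; it is prime, hence so is $L_K(E)$, hence $E$ is $MT-3$ by the corollary to Theorem \ref{prime}; taking $M$ to be the (non-empty, full) subgraph $E$ itself, axioms $MT-1$ and $MT-2$ are automatic, so $E^0$ is a maximal tail. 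Clause (iii): $M_n(K[x,x^{-1}])$ is not simple (for instance $M_n(\langle x-1\rangle)$ is a proper non-zero ideal), so $L_K(E)$ is not simple; with clause (ii) and the simplicity theorem this forces $E$ to fail Condition $(L)$, so some cycle $c$ has no exit, and then the base vertex of $c$ lies on a unique closed simple path, so $E$ fails Condition $(K)$.

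Conversely, assume Condition I. I would first upgrade clause (iii) to the failure of Condition $(L)$: were $(L)$ to hold, then by clause (ii) and the simplicity theorem $L_K(E)$ would be simple, hence all its ideals would be graded, hence $E$ would satisfy Condition $(K)$, contradicting (iii) — here using the standard fact (see \cite{AAS}) that $E$ satisfies Condition $(K)$ iff every ideal of $L_K(E)$ is graded. So there is a cycle $c$ with no exit; then $c^0$ is a non-empty hereditary subset of $E^0$, so by clause (ii) its saturation is all of $E^0$. Thus $E$ is a finite graph possessing an exit-free cycle $c$ with $\overline{c^0}=E^0$, and for such a graph a short saturation argument (around an exit-free cycle no vertex of $c^0$ can be adjoined before another, so every non-empty hereditary set already contains $c^0$ by heredity, and the analogous bookkeeping around any other cycle forces it to lie inside $c^0$) shows that $E$ is a finite ``comet'': every vertex connects to $c$ and $c$ is the only cycle. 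Then $L_K(E)\cong M_n(K[x,x^{-1}])$ for a suitable $n$ by the standard computation of the Leavitt path algebra of a finite comet (\cite{AAS}) — equivalently, by the structure theorem for Leavitt path algebras of finite graphs applied to the graded-simple but non-simple algebra $L_K(E)$. Combining the two equivalences with \cite[Theorem 6.1]{Ranga1} yields the Corollary.

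I expect the main obstacle to be the converse's structural step: carrying out the saturation bookkeeping to pin down $E$ as a comet (ruling out sinks and stray cycles) and then correctly invoking the identification of the Leavitt path algebra of a finite comet with $M_n(K[x,x^{-1}])$; the auxiliary fact ``Condition $(K)\Leftrightarrow$ every ideal graded'' also needs to be cited with care, since it is the one ingredient not developed within this survey.
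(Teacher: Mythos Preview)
The survey itself gives no proof of this corollary; it is quoted, together with the preceding theorem, as one of ``two results from \cite{Ranga1}''. Your derivation---specialising \cite[Theorem~6.1]{Ranga1} to finite graphs, noting that $B_H=\emptyset$ makes clause~(c) of Condition~II vacuous, and then identifying Condition~I with $L_K(E)\cong M_n(K[x,x^{-1}])$ via the comet structure---is correct and is precisely the intended reduction.

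The one place you flag as delicate, the structural step in the converse, does go through as you sketch it: once $c$ has no exit, $c^0$ is hereditary, and a vertex lying on a second cycle $d$ disjoint from $c^0$ can never enter the saturation of $c^0$ (the first-entry stages would strictly decrease around $d$, which is impossible), while a sink, being non-regular, is never adjoined by saturation either. Hence $\overline{c^0}=E^0$ forces $E$ to be a finite graph whose unique cycle is $c$, with no sinks and every vertex connecting to $c$; the identification $L_K(E)\cong M_n(K[x,x^{-1}])$ is then the standard computation for such ``comet'' graphs recorded in \cite{AAS}. Your use of the equivalence ``$E$ satisfies Condition~$(K)$ iff every ideal of $L_K(E)$ is graded'' to upgrade~(iii) to the failure of Condition~$(L)$ is legitimate and is also in \cite{AAS}; alternatively, one can bypass it by quoting the characterisation ``$(K)$ holds iff $E/H$ satisfies $(L)$ for every $H\in\mathscr H_E$'', which with clause~(ii) immediately yields $(L)\Rightarrow(K)$.
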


\end{document}